\documentclass[a4paper,12pt]{amsart}
\baselineskip=12pt

\evensidemargin= 0 cm \oddsidemargin= 0 cm \topmargin -0.5cm
\textheight 23.5cm \textwidth 16.3cm

\usepackage{graphicx}
\usepackage{amsmath,amssymb,amsthm,amsfonts}
\usepackage{amssymb}
\usepackage[active]{srcltx}
\usepackage{geometry}
\usepackage{color}
\newtheorem{thm}{Theorem}[section]

\newtheorem{lem}[thm]{Lemma}
\newtheorem{prop}[thm]{Proposition}
\newtheorem{rem}[thm]{Remark}

\showoutput

\newcommand{\R}{\mathbb R}

\newcommand{\N}{\mathbb N}


\begin{document}
	
	\title[Regularity ]{Third order estimates and the regularity of the stress field for solutions to $p$-Laplace equations.}
	
	\author{Daniel Baratta, Berardino Sciunzi, Domenico Vuono}

	\email[Daniel Baratta]{daniel.baratta@unical.it}
	\email[Berardino Sciunzi]{sciunzi@mat.unical.it}
	\email[Domenico Vuono]{domenico.vuono@unical.it}
	\address{Dipartimento di Matematica e Informatica, Università della Calabria,
		Ponte Pietro Bucci 31B, 87036 Arcavacata di Rende, Cosenza, Italy}
	

	\thanks{B. Sciunzi and D. Vuono are members of INdAM. 
		The authors are supported by PRIN PNRR P2022YFAJH \em{Linear and Nonlinear PDEs: New directions and applications.} 
	}
	\keywords{$p$-Laplacian, regularity, Calderòn-Zygmund estimates}
	
	\subjclass[2020]{35J60, 35B65}
	
	\begin{abstract}
		We consider solutions to
		$$ - \Delta_{p} u = f(x)  \quad \text{in } \Omega\, ,$$
		when $p$ approaches the semilinear limiting case  $p=2$ and we get  third order estimates. As a consequence we deduce  improved regularity properties  of the stress field.
	\end{abstract}
	
	\maketitle
	
	\section{Introduction}
	
	We deal with the regularity of the third derivatives  of weak solutions to:
	\begin{equation} \label{eq:problema}
		- \Delta_{p} u = f(x)  \quad \text{in } \Omega\,
	\end{equation}
	where $\Delta_{p} u := - \operatorname{div} (|\nabla u|^{p-2} \nabla u)$ is the $p$-Laplace operator and $\Omega $ is a domain of $\R^{n}$ with $n \ge 2$. 
	
	We consider possibly sign-changing solutions $u\in W^{1,p}_{loc}(\Omega)$ such that
	
	\begin{equation}\label{soluzionedebole}
		\int_\Omega |\nabla u|^{p-2}(\nabla u,\nabla \psi)\,dx=\int_\Omega f \psi \,dx \quad \forall \, \psi\in C^{\infty}_c(\Omega).
	\end{equation}
	
	\noindent In general solutions of \eqref{eq:problema} are not classical. It is well known (see \cite{DB,LIB,T}) that  solutions of \eqref{eq:problema} are of class $C^{1,\beta}_{loc}(\Omega)\cap C^2(\Omega\setminus Z_u)$, where $Z_u$ denotes the set where the gradient vanishes, for every $p>1$ and under suitable assumptions on the source term.

	\noindent Here we address the study of the integrability of the third derivatives of the solutions. The latter can be defined as distributional derivatives in a suitable meaning. Up to our knowledge  there are no other results in the literature in this direction. As a corollary, our  estimates  will allow us to infer regularity results for the stress field $|\nabla u|^{p-2}\nabla u$.

	\noindent The Calderón-Zygmund theory in the quasilinear context is a deep and challenging  issue, see \cite{1,2,3,4,6,7,9,10,12,13,16,17,22,23,24,25}. In particular we refer the readers to \cite{9,12,13,16,23}  for remarkable progress in this direction. Let us stress some very recent results in \cite{DM,DM2}. Very fine results regarding the Sobolev regularity of the stress field are in \cite{2,3,4,5,Cma,7,17, 22}. We also mention the regularity results in \cite{6} obtained with a different approach.

	\noindent It is well known that $u\in W^{2,2}_{loc}(\Omega)$, for $1<p<3$ (see \cite{DS,S1,S2}). 
This actually holds  for $f$ is $W^{1,1}_{loc}(\Omega)\cap L^q(\Omega)$, $q>n.$ In addition, the result can be extended on the boundary, when Dirichlet or Neumann boundary condition is imposed (see \cite{MMS}). For the vectorial case, recent developments are obtained in \cite{BaCiDiMa,DM,DM2,DM3,MMSV}.

	\noindent Our results are influenced by the classical theory of elliptic regularity, particularly by the constant $C(n,q)$ found in the Calderón-Zygmund estimate
	\begin{equation}\label{calderonintoduzione}
		\|D^{2}w\|_{L^{q}(\Omega)} \le C(n,q) \|\Delta w\|_{L^{q}(\Omega)}.
	\end{equation}
	
	\noindent Under the influence of this factor, we obtain second order  regularity results for the stress field $|\nabla u|^{p-2}\nabla u$:

	\begin{thm}\label{chestress}
		Let $\Omega \subset \R^{N}$ be a domain and $u \in C^{1,\beta}_{loc}(\Omega)$ be a weak solution of \eqref{eq:problema}.
		We set
		\begin{equation}\label{daf}   \begin{cases}
				q_N = 2(q_{N-1}-1) \\
				q_0 = 4\\
			\end{cases}
		\end{equation}
		Let  $q>q_0$, with $q_{N} < q \le q_{N+1}$ for some $N \in \N_0.$ \newline 
		Assume $p$ such that
		\begin{equation}\label{valorip}
			2-\frac{1}{C(n,q)}< p <\min \left\{2+\frac{1}{q-1} ,2+\frac{1}{C(n,q)}\right\}.
		\end{equation}
		Let $f(x) \in W^{2,q/(q-1)}_{loc}(\Omega) \cap C^{1,\beta'}_{loc}(\Omega)$ and 
		\[
		f\geq \tau >0 \quad in \quad  \Omega\,,\quad  (or\,\,f\leq \tau <0 \quad in \quad  \Omega\,)   .
		\]
		Then we have
		\begin{equation}\label{eq:campovett}
			|\nabla u|^{k-1}  \nabla u \in W^{2,1}_{loc}(\Omega,\R^{N})
		\end{equation}
		for any $k > (\alpha+1)/2$, where 
		\begin{equation}
			\alpha > \frac{(q-q_N)}{2^Nq}(1-p)+\frac{3-p}{2^N}+1.
		\end{equation}
		
		\noindent In case $f$ has no sign, let $q\ge q_0$, with $q_N\le q<q_{N+1}$ for some $N\in \N_0$, and $q_N$ given in \eqref{daf}. Assume $p$ satisfies \eqref{valorip}. Then  \eqref{eq:campovett} holds for any $k \ge (p+\alpha)/2,$
  where $\alpha > (3-p)/2^N+1$ (if $N=0$, $\alpha \ge 4-p$).

	\end{thm}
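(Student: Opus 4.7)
The plan is to reduce the $W^{2,1}_{loc}$ regularity of the modified stress field $F_k := |\nabla u|^{k-1}\nabla u$ to quantitative third order estimates for $u$, weighted by suitable powers of $|\nabla u|$, and then close the argument via Hölder's inequality. For $\xi\neq 0$ one has $D_\xi(|\xi|^{k-1}\xi) = |\xi|^{k-1}I + (k-1)|\xi|^{k-3}\xi\otimes\xi$, so that formally
\[
|D F_k(\nabla u)|\,\lesssim\, |\nabla u|^{k-1}|D^2 u|,\qquad |D^2 F_k(\nabla u)|\,\lesssim\, |\nabla u|^{k-1}|D^3 u|+|\nabla u|^{k-2}|D^2 u|^2,
\]
away from $Z_u$. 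The first step is therefore to verify that these pointwise computations correspond to bona-fide distributional derivatives on the whole $\Omega$; I would proceed by truncation, working on the open set $\{|\nabla u|>\delta\}$ (where $u\in C^2$ by the classical results recalled before the statement) and showing that the boundary contributions from $\{|\nabla u|=\delta\}$ vanish in the limit $\delta\to 0^+$, using the $C^{1,\beta}$ regularity of $u$ and the fact that under the sign assumption on $f$ the set $Z_u$ has empty interior with quantitative nondegeneracy.

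The second step is to apply the third order estimates, which are the core of the paper and which I would use here as a black box. Concretely, the recursive sequence $q_N=2(q_{N-1}-1)$ with $q_0=4$ is the one produced by iterating the Calderón--Zygmund inequality \eqref{calderonintoduzione} after differentiating \eqref{eq:problema} once: the linearised equation for $u_k=\partial_k u$ is a perturbation of a Laplace-type equation with coefficients that are $|p-2|$-close to the identity, so that the smallness $|p-2|<1/C(n,q)$ allows one to absorb the perturbative terms and obtain, step by step, integrability of $|\nabla u|^{\alpha-3}|D^3u|^{q/2^N}$ (or similar) on compact subsets. The precise exponent $\alpha > \frac{(q-q_N)}{2^N q}(1-p)+\frac{3-p}{2^N}+1$ is exactly what comes out of one round of the bootstrap performed $N+1$ times starting at $q_0=4$. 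The sign hypothesis $f\geq\tau>0$ is what allows one to keep negative powers of $|\nabla u|$ under control via the nondegeneracy estimates and therefore yields the slightly better threshold for $k$; when $f$ has no sign, one must pay an extra factor of $|\nabla u|^{p-1}$, which is the source of the shift from $(\alpha+1)/2$ to $(p+\alpha)/2$.

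The third step is the Hölder closure. With the third order estimate in hand, I would write
\[
\int_{K} |\nabla u|^{k-1}|D^3 u|\, dx = \int_K \bigl(|\nabla u|^{\alpha-3}|D^3 u|^{q/2^N}\bigr)^{2^N/q}\, |\nabla u|^{k-1-(\alpha-3)2^N/q}\, dx,
\]
and analogously for the $|\nabla u|^{k-2}|D^2u|^2$ term using the $W^{2,2}$ estimates recalled from \cite{DS,S1,S2,MMS}. The condition $k>(\alpha+1)/2$ (respectively $k\geq (p+\alpha)/2$ in the sign-free case) is imposed precisely so that the complementary Hölder exponent produces a locally integrable positive power of $|\nabla u|$, which is finite thanks to the $C^{1,\beta}$ bound on $\nabla u$. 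Combining the two estimates gives $D^2 F_k\in L^1_{loc}$, completing the proof.

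The main obstacle is the first step: legitimising the formal differentiation across $Z_u$. The pointwise identities for $DF_k$, $D^2 F_k$ hold only where $|\nabla u|>0$, and the expressions contain negative powers of $|\nabla u|$. To pass from the open set $\{|\nabla u|>\delta\}$ to all of $\Omega$ one needs the cut-off boundary terms to decay, which relies on controlling the measure and geometry of level sets $\{|\nabla u|=\delta\}$; under the sign assumption this follows from the nondegeneracy produced by $f\geq\tau>0$, while in the sign-free case it is precisely this loss of nondegeneracy that forces the higher threshold on $k$. Once this measure-theoretic step is handled, the rest of the argument is an algebraic bookkeeping of the exponents generated by the bootstrap.
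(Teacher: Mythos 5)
Your overall architecture is right: treat the third order estimates as a black box, then close via a H\"older argument, and pay special attention to the degeneracy set. But there are two substantive gaps that prevent the argument from going through as written.

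First, the regularization step. The paper does \emph{not} work on $\{|\nabla u|>\delta\}$ and send boundary terms to zero; that would require controlling the measure and geometry of level sets of $|\nabla u|$, which is a genuinely delicate problem that the paper never touches. Instead, the paper defines $V_{\epsilon,i}:=h_\epsilon(|\nabla u|)\,|\nabla u|^{k-2}u_i$ with a smooth multiplicative cutoff $h_\epsilon$ vanishing identically near the origin, so that $V_{\epsilon,i}\in C^2$ globally (since $u\in C^2$ away from $Z_u$ and $h_\epsilon\equiv 0$ near it). One then proves a uniform bound $\sup_\epsilon\|V_{\epsilon,i}\|_{W^{2,r}}<\infty$ with $r=1+\tilde\delta$, extracts a weak $W^{2,r}$ limit, and identifies it a.e.\ with $|\nabla u|^{k-1}u_i$. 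There are no boundary integrals, no coarea-type control of $\{|\nabla u|=\delta\}$, and no ``quantitative nondegeneracy'' of $Z_u$ beyond $|Z_u|=0$. Your proposed route would have to supply exactly the level-set control that the paper is designed to avoid.

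Second, and more seriously, the H\"older closure. You write that the thresholds on $k$ are imposed ``so that the complementary H\"older exponent produces a locally integrable \emph{positive} power of $|\nabla u|$, which is finite thanks to the $C^{1,\beta}$ bound.'' This is not how the signed case works, and it gets the logic of the two cases inverted. The paper's third-order estimate has the form $\int |\nabla u|^{p-2+\alpha}|D^2u|^{\gamma-1}|D^3u|^2<\infty$ (not the $|D^3u|^{q/2^N}$-type bound you posit), and after H\"older with exponents $(2/r,2/(2-r))$ the complementary factor is
\[
\Bigl(\int_B |\nabla u|^{\frac{2r}{2-r}\left(k-\tfrac12(p+\alpha)\right)}\,dx\Bigr)^{\frac{2-r}{2}}.
\]
Under $k>(\alpha+1)/2$ the exponent $\tfrac{2r}{2-r}\bigl(k-\tfrac12(p+\alpha)\bigr)$ is only guaranteed to exceed $1-p$, which is \emph{negative}. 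The $C^{1,\beta}$ bound on $\nabla u$ is useless here; the integral is controlled by Theorem \ref{inversodelpeso}, i.e.\ the local integrability of $|\nabla u|^{-(p-1)r}$ for $r<1$, which is precisely where the sign hypothesis $f\geq\tau>0$ enters. When $f$ has no sign, Theorem \ref{inversodelpeso} is unavailable, so one must instead take $k\geq (p+\alpha)/2$, forcing the complementary exponent to be nonnegative so that the $L^\infty$ bound on $\nabla u$ does suffice. Your proposal attributes the boundedness in the \emph{signed} case to the $L^\infty$ bound and positive exponents, which is the mechanism for the \emph{unsigned} case, and thereby misses the entire reason the sign assumption lowers the threshold from $(p+\alpha)/2$ to $(\alpha+1)/2$.

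Finally, a smaller point: the proposal does not address the term $|\nabla u|^{k-2}|D^2u|^2$ beyond citing the $W^{2,2}$ theory; in the paper this term requires a separate iterated Young's inequality against the weighted Hessian bound of Theorem \ref{teosecond} together with Theorem \ref{teoremacalderon}, and it imposes an additional constraint $k>p-1+\tilde\varepsilon$ that your sketch does not record.
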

	\

 \

	\noindent In \cite{5} the authors proved that $|\nabla u|^{p-2}\nabla u\in W^{1,2}_{loc}(\Omega)$, if $f\in L^2_{loc}(\Omega).$
	The natural generalization could be 
	$$|\nabla u|^{p-2}\nabla u\in W^{1,\tilde \alpha}_{loc}(\Omega) \text{  iff  } f\in L_{loc}^{\tilde\alpha}(\Omega).$$
	Partial results have been obtained in \cite{17}, when $\tilde \alpha-2$ is sufficiently small, with $f\in L^{\tilde \alpha}(\Omega)\cap W^{-1,p'}(\Omega)$. Here, as a consequence of the regularity results on third derivatives, we obtain  improved regularity properties of the stress field when $p$ approaches the semilinear case $p=2$.

	\begin{thm}\label{stressfield}
		Let $\Omega \subset \R^{n}$ be a domain and $u \in C^{1,\beta}_{loc}(\Omega)$ be a weak solution of \eqref{eq:problema}. Assume that $\Tilde{\alpha} \ge 3$ and we set $q=2(\tilde \alpha-1)$. Suppose that  
		\begin{equation} \label{assunzionisup}
			\max\left\{2-\frac{1}{2\tilde\alpha-1},2-\frac{1}{C(n,q)} \right\}<p \le 2. 
		\end{equation}
		where $C(n,q)$ is given by \eqref{calderonintoduzione}. Let $f(x) \in W^{2,2(\tilde \alpha-1)/3}_{loc}(\Omega) \cap C^{1,\beta^{'}}_{loc}(\Omega)$. Then
		\begin{equation} \label{eq:tesi}
			|\nabla u|^{p-2} \nabla u  \in W^{1,\tilde\alpha}_{loc}(\Omega).
		\end{equation}
		
	\end{thm}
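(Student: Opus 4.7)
The plan is to deduce \eqref{eq:tesi} by combining a Calderón--Zygmund estimate for $D^2u$ (with exponent $q=2(\tilde\alpha-1)$) with the pointwise bound $|\nabla V_p(\nabla u)|\lesssim |\nabla u|^{p-2}|D^2u|$ and a single Hölder split, and then passing to the limit in the standard non-degenerate approximation. Throughout, fix $\Omega'\Subset\Omega$ and work at the level of the smooth solutions $u_\varepsilon$ of
\begin{equation*}
-\operatorname{div}\bigl((|\nabla u_\varepsilon|^2+\varepsilon^2)^{(p-2)/2}\nabla u_\varepsilon\bigr)=f_\varepsilon,
\end{equation*}
where $f_\varepsilon$ is a standard mollification of $f$; the $C^{1,\beta}_{loc}$ bound on $u$ transfers uniformly to $u_\varepsilon$, and the bounds derived below will be uniform in $\varepsilon$.

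First I would expand the regularized equation on $\{|\nabla u_\varepsilon|\neq 0\}$ to isolate $\Delta u_\varepsilon$, obtaining the pointwise inequality
\begin{equation*}
|\Delta u_\varepsilon|\le (|\nabla u_\varepsilon|^2+\varepsilon^2)^{(2-p)/2}|f_\varepsilon|+(2-p)\,|D^2 u_\varepsilon|.
\end{equation*}
Inserting this into \eqref{calderonintoduzione} with exponent $q=2(\tilde\alpha-1)$ and absorbing the term $(2-p)C(n,q)\|D^2 u_\varepsilon\|_{L^{q}}$ on the left hand side—which is legitimate precisely under the second condition $p>2-1/C(n,q)$ in \eqref{assunzionisup}—yields
\begin{equation*}
\|D^2 u_\varepsilon\|_{L^{q}(\Omega')}\le \frac{C(n,q)}{1-(2-p)C(n,q)}\bigl\|(|\nabla u_\varepsilon|^2+\varepsilon^2)^{(2-p)/2}f_\varepsilon\bigr\|_{L^{q}(\Omega')}.
\end{equation*}
The uniform $C^{1,\beta}_{loc}$ bound renders $|\nabla u_\varepsilon|^{2-p}$ uniformly bounded on $\Omega'$, while $f\in W^{2,2(\tilde\alpha-1)/3}_{loc}\cap C^{1,\beta'}_{loc}$ places (via Sobolev embedding and the continuous embedding $C^{1,\beta'}_{loc}\hookrightarrow L^{q}_{loc}$) $f$ in $L^{q}_{loc}$. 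Hence $u_\varepsilon\in W^{2,q}_{loc}$ with bounds uniform in $\varepsilon$.

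Next, direct differentiation gives $|\nabla V_p(\nabla u_\varepsilon)|\le C_p\,(|\nabla u_\varepsilon|^2+\varepsilon^2)^{(p-2)/2}|D^2 u_\varepsilon|$, so that by Hölder with conjugate exponents $q/\tilde\alpha$ and $2(\tilde\alpha-1)/(\tilde\alpha-2)$,
\begin{equation*}
\int_{\Omega'}|\nabla V_p(\nabla u_\varepsilon)|^{\tilde\alpha}\le C\,\|D^2 u_\varepsilon\|_{L^{q}(\Omega')}^{\tilde\alpha}\Bigl(\int_{\Omega'}(|\nabla u_\varepsilon|^2+\varepsilon^2)^{\sigma/2}\Bigr)^{\!(q-\tilde\alpha)/q},\quad \sigma=\frac{2(p-2)\tilde\alpha(\tilde\alpha-1)}{\tilde\alpha-2}.
\end{equation*}
Here the first threshold $p>2-1/(2\tilde\alpha-1)$ in \eqref{assunzionisup} controls $|\sigma|$ sharply enough to match the Caccioppoli-type weighted estimates that underlie the third-order regularity theory developed for Theorem~\ref{chestress}, guaranteeing uniform finiteness of the weighted integral on the right. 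Once both $\varepsilon$-uniform estimates are in hand, the convergence $u_\varepsilon\to u$ in $C^{1,\beta'}_{loc}$ together with weak $L^{\tilde\alpha}$ compactness for $\nabla V_p(\nabla u_\varepsilon)$ identifies the weak limit and yields \eqref{eq:tesi}.

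The main technical obstacle sits in the second display: since $p<2$ the exponent $\sigma$ is negative, so the weight $|\nabla u|^{\sigma}$ is singular on the critical set $Z_u$, which here can have positive measure because Theorem~\ref{stressfield} imposes no sign hypothesis on $f$. Taming that singularity rigorously—by carefully deploying the weighted third-order bounds of the earlier section, and by checking that the two thresholds $2-1/C(n,q)$ and $2-1/(2\tilde\alpha-1)$ on $p$ are indeed compatible for closing the absorption scheme—is where the work concentrates, the rest of the argument being essentially bookkeeping.
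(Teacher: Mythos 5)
Your first step (regularizing, isolating $\Delta u_\varepsilon$, and absorbing the $(2-p)C(n,q)\|D^2u_\varepsilon\|_{L^q}$ term to get $u_\varepsilon$ uniformly bounded in $W^{2,q}_{loc}$) is correct and is precisely the paper's Theorem~\ref{teoremacalderon}. The trouble is in the second step: once you split
\begin{equation*}
\int_{\Omega'}|\nabla V_p(\nabla u_\varepsilon)|^{\tilde\alpha}
\le C\,\|D^2 u_\varepsilon\|_{L^{q}(\Omega')}^{\tilde\alpha}\Bigl(\int_{\Omega'}(|\nabla u_\varepsilon|^2+\varepsilon^2)^{\sigma/2}\Bigr)^{(q-\tilde\alpha)/q},
\end{equation*}
the weighted integral is not controllable by any estimate in this paper. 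Your $\sigma=\frac{2\tilde\alpha(\tilde\alpha-1)}{\tilde\alpha-2}(p-2)$ is negative with magnitude that can reach, e.g. for $\tilde\alpha=3$ and $p$ near $2-\tfrac15$, values close to $12/5$. The only integrability result for negative powers of $|\nabla u|$ available here is Theorem~\ref{inversodelpeso}, which (i) requires $f$ to be signed (whereas Theorem~\ref{stressfield} does not, so $|Z_u|>0$ is possible and $\int|\nabla u|^{\sigma}$ can be $+\infty$ for any $\sigma<0$), and (ii) even under the sign assumption only gives $|\nabla u|^{-(p-1)r}\in L^1_{loc}$ for $r<1$, i.e. $|\sigma|<p-1<1$, far below what your Hölder split demands. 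The ``weighted third-order bounds'' you invoke are of the form $\int|\nabla u|^{p-2+\alpha}|D^2u|^{\gamma-1}|D^3u|^2$ and have no bearing on a bare integral of a negative power of $|\nabla u|$, since no $D^2u$ or $D^3u$ factor appears in your residual term to trade off against the singular weight. So the gap you flag at the end is not a technicality to be tamed; it is fatal as stated.

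The paper avoids this problem by never performing a global Hölder split against $\|D^2u\|_{L^q}$. Instead it tests the \emph{first} linearized equation with the weighted test function $(\epsilon+|\nabla u_\epsilon|^2)^{(\tilde\alpha-1)(p-2)/2}u_{\epsilon,i}|u_{\epsilon,kl}|^{\tilde\alpha-2}\psi^2$, sums on $i$, and uses the condition $p>2-1/\tilde\alpha$ to keep a good sign after two Cauchy--Schwarz absorptions. The dangerous term $\tilde I_1$ is then split by Young's inequality into a third-derivative piece controlled by Proposition~\ref{soluzioneregolarizzata}/Remark~\ref{remarkino} (with the choices $\alpha=4-p$, $\gamma=2\tilde\alpha-5$) and a remainder whose weight exponent equals $2\tilde\alpha(p-2)$, which is kept $\ge p-2-\beta$ (hence integrable against $|D^2u_\varepsilon|^2$ via Theorem~\ref{teosecond}) exactly under the hypothesis $p>2-1/(2\tilde\alpha-1)$. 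The key structural point is that the $D^2u$ factor in the test function always rides along with the negative weight, so one never has to integrate a large negative power of $|\nabla u|$ by itself. If you want to salvage your route, you would need to keep the $D^2u$ factor attached to the weight rather than stripping it off with a clean Hölder split; that in effect pushes you back toward the paper's test-function argument.
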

	
	\noindent To accomplish this, we first extend a result in \cite{MRS}, which does not take into account the effects of the boundary of $\Omega$. We have the following

	\begin{thm}\label{teoremacalderon}
		Let $\Omega$ be a domain of $\R^{n}$ and $u\in C^{1,\beta}_{loc}(\Omega)$ be a weak solution to \eqref{eq:problema}, with $f(x)\in W^{1,1}_{loc}(\Omega)\cap C_{loc}^{0,\beta'}(\Omega).$ Let $q \ge 2$ and $p$ be such that $$|p-2| < \frac{1}{C(n,q)},$$ with $C(n,q)$ given by \eqref{calderonintoduzione}. Then, if $1 < p \le 2$, we have $$u \in W^{2,q}_{loc}(\Omega).$$
		If $p>2$ then the same conclusion holds if $p$ satisfies, in addition,   $q<\frac{p-1}{p-2}.$ 
		
	\end{thm}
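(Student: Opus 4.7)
The plan is to reduce matters to the constant-coefficient Calderón--Zygmund inequality \eqref{calderonintoduzione} for $-\Delta$, exploiting the fact that, when $|p-2|$ is small, the $p$-Laplacian is a controlled perturbation of the Laplacian. Formally, expanding $-\Delta_p u=f$ gives
\begin{equation*}
-\Delta u=\frac{f}{|\nabla u|^{p-2}}+(p-2)\,\frac{\langle D^2 u\,\nabla u,\nabla u\rangle}{|\nabla u|^{2}},
\end{equation*}
and since $|\langle D^2 u\,\nabla u,\nabla u\rangle|\le|\nabla u|^{2}|D^2 u|$, combining with \eqref{calderonintoduzione} would yield
\begin{equation*}
\|D^2 u\|_{L^q}\le C(n,q)\bigl\|f\,|\nabla u|^{2-p}\bigr\|_{L^q}+|p-2|\,C(n,q)\,\|D^2 u\|_{L^q},
\end{equation*}
so the hypothesis $|p-2|C(n,q)<1$ allows absorbing the Hessian on the left. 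The two issues to be handled are the degeneracy of $|\nabla u|^{p-2}$ on $Z_u$ and the essentially global nature of \eqref{calderonintoduzione}, which is what forbids a direct application as in \cite{MRS}.

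\medskip

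\textbf{Step 1: regularization.} Fix $\Omega'\Subset\Omega''\Subset\Omega$ and, for $\varepsilon>0$, consider the unique solution $u_\varepsilon\in C^{2,\alpha}_{loc}(\Omega')$ of
\begin{equation*}
-\Div\bigl((|\nabla u_\varepsilon|^{2}+\varepsilon)^{(p-2)/2}\nabla u_\varepsilon\bigr)=f_\varepsilon \quad \text{in }\Omega',
\end{equation*}
with $f_\varepsilon$ a mollification of $f$ and boundary datum inherited from $u$. The DiBenedetto--Tolksdorf--Lieberman theory gives uniform-in-$\varepsilon$ $C^{1,\beta}_{loc}$ estimates and, up to subsequences, $u_\varepsilon\to u$ and $\nabla u_\varepsilon\to\nabla u$ locally uniformly. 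Since $u_\varepsilon$ is smooth, expanding the regularized operator and estimating the quadratic form by the Hessian norm yields the rigorous pointwise bound
\begin{equation*}
|\Delta u_\varepsilon|\le|p-2|\,|D^2 u_\varepsilon|+\frac{|f_\varepsilon|}{(|\nabla u_\varepsilon|^{2}+\varepsilon)^{(p-2)/2}}.
\end{equation*}

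\medskip

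\textbf{Step 2: localized Calderón--Zygmund and absorption.} Pick concentric balls $B_{r_1}\Subset B_{r_2}\Subset\Omega'$ and a cut-off $\eta\in C^\infty_c(B_{r_2})$ with $\eta\equiv 1$ on $B_{r_1}$ and $|\nabla^k\eta|\lesssim (r_2-r_1)^{-k}$. Apply \eqref{calderonintoduzione} to $\eta u_\varepsilon$ extended by zero to $\R^n$, and expand $\Delta(\eta u_\varepsilon)=\eta\Delta u_\varepsilon+2\nabla\eta\cdot\nabla u_\varepsilon+u_\varepsilon\Delta\eta$. Substituting the bound from Step~1 produces
\begin{equation*}
\|D^2 u_\varepsilon\|_{L^q(B_{r_1})}\le|p-2|\,C(n,q)\,\|D^2 u_\varepsilon\|_{L^q(B_{r_2})}+\Phi_\varepsilon(r_1,r_2),
\end{equation*}
where $\Phi_\varepsilon(r_1,r_2)$ depends only on $\|f_\varepsilon(|\nabla u_\varepsilon|^{2}+\varepsilon)^{(2-p)/2}\|_{L^q(B_{r_2})}$, $\|\nabla u_\varepsilon\|_{L^q(B_{r_2})}$, $\|u_\varepsilon\|_{L^q(B_{r_2})}$ and negative powers of $r_2-r_1$. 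A standard hole-filling iteration on a dyadic sequence of shrinking radii, enabled by the strict inequality $|p-2|C(n,q)<1$, reabsorbs the Hessian on the right and delivers an $\varepsilon$-uniform bound on $\|D^2 u_\varepsilon\|_{L^q(B_{r_1})}$.

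\medskip

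\textbf{Step 3: passage to the limit and the case $p>2$.} For $1<p\le 2$ the factor $(|\nabla u_\varepsilon|^{2}+\varepsilon)^{(2-p)/2}$ is controlled by a power of the uniform gradient estimate, so $\Phi_\varepsilon$ is bounded by a constant times $\|f\|_{L^q}$ plus lower-order terms. For $p>2$ the factor $(|\nabla u_\varepsilon|^{2}+\varepsilon)^{-(p-2)/2}$ may blow up as $\varepsilon\to 0$ near $Z_u$; the additional assumption $q<(p-1)/(p-2)$ is exactly what keeps $|\nabla u|^{-(p-2)q}$ locally integrable, via the negative-power estimates on $|\nabla u|$ from \cite{DS,S1,S2}, and this is how one transfers $\Phi_\varepsilon$ to the limit. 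Weak compactness in $W^{2,q}_{loc}$ together with the identification of the distributional Hessian via $\nabla u_\varepsilon\to\nabla u$ then yields $u\in W^{2,q}_{loc}(\Omega)$. The main obstacle is Step~2: the Leibniz expansion forces Hessian terms to reappear on the whole ball $B_{r_2}$ rather than on $B_{r_1}$, and only the iterative absorption on shrinking balls converts the bare algebraic condition $|p-2|C(n,q)<1$ into an effective estimate. This is precisely the point that distinguishes the present local statement from the global setting of \cite{MRS}.
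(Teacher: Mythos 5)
Your proof is correct, but it takes a genuinely different route from the paper's at the absorption step. The paper avoids any iteration by exploiting the algebraic identity
\begin{equation*}
\varphi\, D^{2}u_{\epsilon} \;=\; D^{2}(u_{\epsilon}\,\varphi)\;-\;\nabla u_{\epsilon}\otimes\nabla\varphi\;-\;\nabla\varphi\otimes\nabla u_{\epsilon}\;-\;u_{\epsilon}\,D^{2}\varphi
\end{equation*}
inside the right-hand side produced by \eqref{calderonintoduzione}: after plugging \eqref{uepsrisolve} into $\Delta(u_\epsilon\varphi)$ and bounding the $(p-2)$-term by $|p-2|\,\|\varphi D^2u_\epsilon\|_{L^q}$, the identity above lets one trade $\varphi D^2 u_\epsilon$ for $D^2(u_\epsilon\varphi)$ plus lower-order corrections controlled by $\|\nabla u\|_{L^\infty_{loc}}$, so that the resulting inequality has the \emph{same} quantity $\|D^2(u_\epsilon\varphi)\|_{L^q(B_{2R})}$ on both sides and the absorption is immediate, in a single shot, from $|p-2|\,C(n,q)<1$. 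You instead derive an estimate of the form $\|D^2u_\epsilon\|_{L^q(B_{r_1})}\le C(n,q)|p-2|\,\|D^2u_\epsilon\|_{L^q(B_{r_2})}+\Phi_\epsilon(r_1,r_2)$ where the Hessian on the right lives on the strictly larger ball, and you appeal to the standard Giaquinta-type iteration lemma on shrinking radii to reabsorb it (which does work, since the fixed coefficient $\theta=C(n,q)|p-2|$ is $<1$ and the $(r_2-r_1)^{-k}$ blow-up in $\Phi_\epsilon$ is exactly the kind of thing that lemma is designed to handle). Both strategies are legitimate; the paper's is a bit cleaner and self-contained, while yours is the textbook localization of a global CZ inequality, at the cost of an extra lemma. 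Note also that your closing comment is slightly off: it is not the case that the Leibniz expansion ``forces'' the Hessian to reappear only on the larger ball — the paper's trick shows that it can be folded back onto $D^2(u_\epsilon\varphi)$ on the same domain, which is precisely how they dispense with iteration. The remaining ingredients — uniform $C^{1,\beta}$ estimates for $u_\epsilon$, the treatment of the source term $f(\epsilon+|\nabla u_\epsilon|^2)^{(2-p)/2}$ (bounded directly for $p\le 2$, and via the weighted estimate \eqref{mezzoinversodelpeso} of Theorem \ref{teosecond} under $q<(p-1)/(p-2)$ for $p>2$, using boundedness of $f$ on compacta), and weak compactness in $W^{2,q}_{loc}$ — coincide with the paper's.
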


	\noindent Theorems \ref{chestress} and Theorem \ref{stressfield} follow from more general estimates concerning the third derivatives. More precisely we have  the following

	\begin{thm} \label{derivate terze}
		Let $\Omega\subset \mathbb{R}^n$ be a domain and let $u\in C^{1,\beta}_{loc}(\Omega)$ be a weak solution of \eqref{eq:problema}. 
		
		For $\gamma> \max \{(p-2)/(p-1),2-p\}$, we set 
		\begin{equation} \label{definizionediq_n1}
			\begin{cases}
				q_N = 2(q_{N-1}-1) \\
				q_0 = 3+\gamma.\\
			\end{cases}
		\end{equation}
		Let $q>q_0$, with $q_N< q\le q_{N+1}$ for some $N \in \mathbb{N}_0$ and $p$ be such that: 
		\begin{equation}\label{evvai}
			2-\frac{1}{C(n,q)}< p <\min \left\{2+\frac{1}{q-1} ,2+\frac{1}{C(n,q)}\right\},
		\end{equation}
		where $C(n,q)$ is given by \eqref{calderonintoduzione}. Assume \begin{equation}\label{valorealpha}
			\alpha>\alpha(p,q,N):=\frac{(q-q_N)}{2^Nq}(1-p)+\frac{3-p}{2^N}+1.
		\end{equation}
		Let $f(x)\in W^{2,q/(q-\gamma)}_{loc}(\Omega)\cap C^{1,\beta'}_{loc}(\Omega)$ and
		\[
		f\geq \tau >0 \quad in \quad  \Omega\,,\quad  (or\,\,f\leq \tau <0 \quad in \quad  \Omega\,)   .
		\]
		Then, for any $\tilde \Omega \subset \subset \Omega$, for $\gamma\ge 1$, we have that 
		\begin{equation}\label{stima derivata terza}
			\int_{\tilde \Omega \setminus Z_u} |\nabla u|^{p-2+\alpha} |D^2u|^{\gamma-1}|D^3u|^{2} \, dx \le C,
		\end{equation}
		where $C =C(\gamma,\alpha,p,q,n,f,\tilde \Omega,\|\nabla u\|_{L^{\infty}_{loc}(\Omega)},N)$.
		
		If $ \max \{(p-2)/(p-1),2-p\}<\gamma<1$, for any $\tilde \Omega \subset \subset \Omega$, we deduce that
		\begin{equation}\label{stima derivata terza33}
			\int_{\tilde \Omega \setminus (Z_u\cup \{|D^2u|=0\})} |\nabla u|^{p-2+\alpha} |D^2u|^{\gamma-1}|D^3u|^{2} \, dx \le C,
		\end{equation}
		where $C =C(\gamma,\alpha,p,q,n,f,\tilde \Omega,\|\nabla u\|_{L^{\infty}_{loc}(\Omega)},N)$.
	\end{thm}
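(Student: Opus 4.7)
The plan is to differentiate \eqref{eq:problema} in each direction $x_k$, test the resulting linearized equation against a weighted combination of $D^2u$, and close the estimate inductively on $N$ using Theorem \ref{teoremacalderon}. On the regular set $\Omega\setminus Z_u$, where $u\in C^2$, the once-differentiated equation reads
$$-\partial_j\!\Bigl(|\nabla u|^{p-2}\delta_{ij}\,u_{ki}+(p-2)|\nabla u|^{p-4}u_i u_j u_{kj}\Bigr)=\partial_k f,$$
and after regularization $|\nabla u|_\e^2:=|\nabla u|^2+\e$ (and, when $\gamma<1$, a second parameter $\mu>0$ in place of $|D^2u|^2$), I would test against
$$\psi_k=-\partial_l\!\Bigl(\eta^2\, u_{kl}\,|\nabla u|_\e^{\alpha-2}\,(|D^2u|^2+\mu)^{(\gamma-1)/2}\Bigr),$$
summed over $k,l$, with $\eta\in C^\infty_c(\tilde\Omega)$ a cutoff. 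The ellipticity of the coefficient matrix (with constants $\min\{1,p-1\}$, $\max\{1,p-1\}$) combined with the second $\partial_l$ produces the good term $\int \eta^2\,|\nabla u|^{p-2+\alpha}|D^2u|^{\gamma-1}|D^3u|^2\,dx$, up to the $\e$-- and $\mu$-regularizations.

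Once the good term is isolated, the right-hand side splits into three groups of remainders. The first group consists of cross terms with one $|D^3u|$ factor, generated when $\partial_l$ hits the weights $|\nabla u|_\e^{\alpha-2}$ or $(|D^2u|^2+\mu)^{(\gamma-1)/2}$; these are absorbed into the good term by Young's inequality, the cost being a quadratic $|D^2u|$-integrand of the form $|\nabla u|^{p-4+\alpha}|D^2u|^{\gamma+2}$ or $|\nabla u|^{p-2+\alpha}|D^2u|^{\gamma}$. The second group consists of cutoff-gradient contributions involving $|\nabla\eta|^2$, estimated by Hölder in terms of an $L^{q'}_{loc}$ norm of $D^2u$ for a suitable $q'$. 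The third group collects the source contributions; after one further integration by parts, the factor $\partial_k f$ is replaced by $D^2 f$ paired with $u_{kl}|\nabla u|_\e^{\alpha-2}(|D^2u|^2+\mu)^{(\gamma-1)/2}$, whose control via Hölder with the dual exponent $q/(q-\gamma)$ is exactly the origin of the hypothesis $f\in W^{2,q/(q-\gamma)}_{loc}$.

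The improvement across the sequence $\{q_N\}$ arises through an induction on $N$. At the base $N=0$, the hypothesis $|p-2|<1/C(n,q_0)$ (built into \eqref{evvai}) makes Theorem \ref{teoremacalderon} applicable and produces $u\in W^{2,q_0}_{loc}$, which is exactly the integrability needed to close the three groups of remainders for $\alpha>\alpha(p,q,0)$. For the inductive step, the third-derivative estimate at level $N-1$, applied to the auxiliary quantity $w=(|\nabla u|^2_\e)^{(p-2+\alpha')/2}$ via a chain-rule identity expressing $|\nabla w|^2$ through $|\nabla u|$, $|D^2u|$ and $|D^3u|$, upgrades the integrability of $|D^2u|$ by a Sobolev embedding; this is precisely the role of the doubling recursion $q_N=2(q_{N-1}-1)$, and the upgraded $|D^2u|\in L^{q_N}_{loc}$ is then reinserted into groups (ii)--(iii) to close the estimate in the new range $q_N<q\le q_{N+1}$. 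The limits $\mu\to 0$ and $\e\to 0$ are taken using Fatou's lemma, together with the sign assumption on $f$ (and the known $W^{2,2}_{loc}$ theory from \cite{DS,S1,S2}) to guarantee that $|Z_u|=0$ with a quantitative decay of $|\{|\nabla u|\le t\}|$; in the case $\gamma<1$ the same passage is carried out on $\tilde\Omega\setminus(Z_u\cup\{|D^2u|=0\})$ so as to avoid the singularity of $|D^2u|^{\gamma-1}$.

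I expect the main obstacle to be the bookkeeping of exponents throughout the induction: at each level one must verify that Young's inequality redistributes the bad powers of $|\nabla u|$, $|D^2u|$ and $|D^3u|$ while staying strictly inside the window \eqref{valorealpha} for $\alpha$, and that the corresponding dual exponent for $D^2f$ matches $q/(q-\gamma)$ at every stage. A secondary but delicate point is the regularization near $Z_u$ in the regime $p<2$: since $|\nabla u|^{p-2+\alpha}$ is then singular on $Z_u$, the passage $\e\to 0$ requires a Caccioppoli-type bound on $(|\nabla u|_\e^2)^{(p-2+\alpha)/2}$ that is uniform in $\e$ on compact subsets of $\tilde\Omega$, and this is precisely where the sign assumption on $f$ becomes indispensable.
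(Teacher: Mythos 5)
Your overall framework — regularize, differentiate the equation, test against a weighted combination of $D^2u$, isolate the good term by ellipticity, absorb the cross terms via weighted Young, and pass to the limit by Fatou using the sign of $f$ — agrees in spirit with the paper, which tests the second linearized equation \eqref{eq:lin sec} with $\varphi=\tfrac{G_\e(|\nabla u|)^{\alpha+1}}{|\nabla u|}|D^2u|^{\gamma-1}u_{ij}\psi^2$ (and inserts a $G_\tau(|D^2u|)$ truncation when $\gamma<1$). The minor differences (truncations $G_\e,G_\tau$ versus smooth mollifications, first- versus second-linearized form, the off-by-two power of $|\nabla u|$ in your test function) are cosmetic.

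However, your central claim about how the sequence $q_N$ and the constraint \eqref{valorealpha} on $\alpha$ arise does not match the paper's argument and, as stated, cannot work. In the paper there is \emph{no} induction on $N$ and \emph{no} Sobolev-embedding bootstrap of an auxiliary function. The recursion $q_N=2(q_{N-1}-1)$ comes from iterating a weighted Young inequality on the single remainder $I_3\approx\int_{B_{2R}\setminus Z_u}|\nabla u|^{p+\alpha-4}|D^2u|^{3+\gamma}\psi^2\,dx$: one writes
\begin{equation*}
|\nabla u|^{p+\alpha-4}|D^2u|^{3+\gamma}=\Bigl(|\nabla u|^{\frac{p-2-\beta}{2}}|D^2u|\Bigr)\Bigl(|\nabla u|^{p+\alpha-4-\frac{p-2-\beta}{2}}|D^2u|^{2+\gamma}\Bigr),
\end{equation*}
applies Young's inequality, and bounds the first factor squared by the weighted Hessian estimate of Theorem \ref{teosecond}. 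The surviving term has $|D^2u|$ raised to the doubled exponent $2(q_0-1)$. After $N$ such steps the surviving integrand is $|\nabla u|^{2^N(p-4+\alpha)+\sum 2^{k-1}(2-p+\beta)}|D^2u|^{q_N}$. A final Young inequality with exponents $\bigl(q/q_N,\,q/(q-q_N)\bigr)$ then splits this into $\int|D^2u|^q$, which is controlled by Theorem \ref{teoremacalderon} under the contraction condition $|p-2|<1/C(n,q)$, and a negative power of $|\nabla u|$, controlled by Theorem \ref{inversodelpeso} (this is the only place where the sign of $f$ enters). The constraint \eqref{valorealpha} is exactly the condition ensuring that this negative exponent of $|\nabla u|$ stays above $1-p$.

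Your proposed mechanism has two concrete defects. First, the recursion $q_N=2(q_{N-1}-1)$ is dimension-free, whereas a Sobolev jump would be $q_{N-1}\mapsto q_{N-1}\,n/(n-q_{N-1})$ (or $2\mapsto 2^*=2n/(n-2)$), which depends on $n$ and cannot reproduce \eqref{definizionediq_n1}. Second, the auxiliary function $w=(|\nabla u|_\e^2)^{(p-2+\alpha')/2}$ has $\nabla w$ depending only on $\nabla u$ and $D^2u$; there is no chain-rule identity linking $|\nabla w|$ to $D^3u$, so one cannot feed the level-$(N-1)$ third-derivative estimate into Sobolev embedding of $w$ as you describe. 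To repair this in your framework, you would need to replace the Sobolev/induction step with the direct iterated-Young device above (keeping track that each iteration costs a $|\nabla u|^{p-2-\beta}|D^2u|^2$ factor controlled by Theorem \ref{teosecond}), and to close the argument with Theorem \ref{teoremacalderon} at the final exponent $q$ rather than by bootstrapping.
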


	\noindent We also point out that, if we assume that $f$ has no a sign, Theroem \ref{derivate terze} holds, in the case $\alpha > (3-p)/2^N+1$, for some $N\in \N$, if $N=0$, $\alpha \ge 4-p$ (see Theorem \ref{derivateterze2} in Section \ref{risultatiprincipali}). 
	\
	
	\
	
	\noindent Our paper is structured as follows. In Section \ref{notations} we state the notation and we recall some preliminary results that will be useful to prove our result. In Section \ref{risultatiprincipali} we prove Theorem \ref{chestress}, Theorem \ref{stressfield}, Theorem \ref{teoremacalderon} and Theorem \ref{derivate terze}.

	\section{Notations and Preliminary results}\label{notations}
	\noindent {\bf Notation.} Generic fixed and numerical constants will
	be denoted by $C$ (with subscript in some case) and they will be
	allowed to vary within a single line or formula. By $|A|$ we will
	denote the Lebesgue measure of a measurable set $A$.

	For $a, b \in \R^n$ we denote by $a \otimes b$ the matrix whose entries are $(a \otimes b)_{ij}=a_ib_j$. We remark that for any $v,w \in \R^n$ it holds that:
	$$\langle a \otimes b \ v, w \rangle = \langle b, v \rangle \langle a , w \rangle.$$

	In the sequel, we will use the notation
	$u_i := u_{x_i}$ , for $i = 1, . . . , n$, to indicate the partial derivative of $u$ with respect to $x_i$.  The second derivatives of $u$ will be
	denoted with $u_{ij},$ $i,j = 1,...,n$. 
	
	Let $x_0\in \Omega$ and  $B_{2R}(x_0)\subset\subset\Omega$. Now we consider the regularized problem
	\begin{equation} \label{eq:problregol}
		\begin{cases}
			-\operatorname{div}\left( (\epsilon +|\nabla u_{\epsilon}|^{2})^{\frac{p-2}{2}} \, \nabla u_{\epsilon} \right) = f(x) & \text{in } B_{2R}(x_0) \\
			u_{\epsilon} = u & \text{on } \partial B_{2R}(x_0),
		\end{cases}
	\end{equation}
	where $\epsilon \in [0,1)$. The existence of such a weak solution follows by classical minimization procedure. Notice that by standard regularity results \cite{DiKaSc,GT} the solution $ u_\varepsilon$ belongs to $C^2(B_{2R}(x_0))$, therefore $u_\varepsilon$ is a classical solution of 
	\begin{equation}\label{uepsrisolve}
		-\Delta u_\varepsilon=(p-2)\frac{(D^2u_\varepsilon \nabla u_\varepsilon,\nabla u_\varepsilon)}{(\varepsilon+|\nabla u_\varepsilon|^2)}+ \frac{f}{(\varepsilon+|\nabla u_{\varepsilon}|^2)^{\frac{p-2}{2}}}.
	\end{equation}
	Let us consider a compact set $K\subset \subset B_{2R}(x_0)$. We note that, by \cite{DiKaSc}, ${u}_\varepsilon$ is uniformly bounded in $C^{1,\beta}(K)$, for some $0<\beta<1$. Moreover, by Dirichlet datum in \eqref{eq:problregol} and by uniqueness, it follows that
	\begin{equation}\label{evvai2}
		{u}_\varepsilon\rightarrow  u \quad \text{in the norm }\|\cdot\|_{C^{1,\beta}(K)}.
	\end{equation}
	\
	
	\

	We remind (see e.g. \cite{GT}) that it holds the Calder\'on-Zygmund inequality for elliptic operators 
	
	\begin{lem}\label{Zygmund}
		Let $\Omega$ be a bounded smooth domain of $\R^n$ and let $w \in W^{2,q}_{0}(\Omega),$ then there exists a positive constant $C(n,q)$ such that 
		\begin{equation} \label{eq:CZ}
			\|D^{2}w\|_{L^{q}(\Omega)} \le C(n,q) \|\Delta w\|_{L^{q}(\Omega)}.
		\end{equation}
	\end{lem}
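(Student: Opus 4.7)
The plan is to prove the inequality by reducing it, via density of $C_c^\infty(\Omega)$ in $W^{2,q}_0(\Omega)$, to the case $w \in C_c^\infty(\Omega)$, which I then view as a compactly supported smooth function on all of $\R^n$ by extension by zero. Writing $\Gamma$ for the fundamental solution of the Laplacian, the representation formula
\[
w(x) = \int_{\R^n} \Gamma(x-y)\, \Delta w(y)\, dy
\]
is justified for such $w$, and differentiating twice produces, in the principal value sense, an identity of the form
\[
\partial_i \partial_j w(x) = \mathrm{p.v.}\!\int_{\R^n} K_{ij}(x-y)\, \Delta w(y)\, dy + c_{ij}\, \Delta w(x),
\]
with $K_{ij}(x) = \partial_i \partial_j \Gamma(x)$ and $c_{ij}$ a constant coming from the derivative of a locally integrable singular kernel.

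The next step is to verify that each $K_{ij}$ is a Calderón--Zygmund kernel: it is homogeneous of degree $-n$, smooth away from the origin with $|\nabla K_{ij}(x)| \le C|x|^{-n-1}$, and has vanishing mean on every sphere centered at the origin (for $i\ne j$ by symmetry, and for $i=j$ because $\sum_i K_{ii}$ is the Laplacian of the fundamental solution, which is a Dirac mass). With these properties in hand, the standard Calderón--Zygmund theory gives the $L^q$-boundedness of the singular integral operator $T_{ij}\colon g \mapsto \mathrm{p.v.}(K_{ij}*g)$ for all $1<q<\infty$. To do this one typically establishes the $L^2$ bound first (by Fourier transform, noting that the symbol $\widehat{K_{ij}}(\xi) = -\xi_i\xi_j/|\xi|^2$ is bounded), then proves a weak $(1,1)$ estimate via the Calderón--Zygmund decomposition, and concludes by Marcinkiewicz interpolation for $1<q\le 2$ and duality for $2\le q<\infty$.

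Combining these bounds for all pairs $(i,j)$ and using the pointwise identity above, one arrives at
\[
\|D^2 w\|_{L^q(\R^n)} \le C(n,q)\, \|\Delta w\|_{L^q(\R^n)},
\]
and since $w$ is supported in $\Omega$ the estimate transfers verbatim to $\Omega$. A density argument then lifts the inequality from $C_c^\infty(\Omega)$ to the full space $W^{2,q}_0(\Omega)$.

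I expect the main obstacle to be the weak $(1,1)$ estimate for the singular integral operators $T_{ij}$, since this is the step where one leaves the comfort of Hilbert space techniques and must invoke the full Calderón--Zygmund decomposition together with a careful analysis of the kernel's smoothness (Hörmander's condition). The other pieces—the representation formula, the kernel properties, and the interpolation/duality arguments—are comparatively routine once the weak-type bound is established.
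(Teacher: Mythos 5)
The paper does not prove this lemma at all: it is recalled as a classical fact with a pointer to Gilbarg--Trudinger, so there is no internal argument to compare against. Your sketch is precisely the standard textbook proof that the citation leads to --- reduction to $C_c^\infty(\Omega)$ by density and zero-extension, the Newtonian potential representation $w=\Gamma*\Delta w$, identification of $\partial_i\partial_j\Gamma$ as a Calder\'on--Zygmund kernel $K_{ij}$ plus the multiple $\frac{\delta_{ij}}{n}\delta_0$, the $L^2$ bound from the bounded symbol $\xi_i\xi_j/|\xi|^2$, the weak $(1,1)$ estimate via the Calder\'on--Zygmund decomposition, Marcinkiewicz interpolation and duality --- and it is correct in substance. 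One small inaccuracy worth fixing: the cancellation condition for the diagonal kernels $K_{ii}$ does not follow merely from the statement that ``$\sum_i K_{ii}$ is a Dirac mass.'' What one actually uses is that $\Gamma$ is harmonic away from the origin, so $\sum_i K_{ii}(\omega)=0$ pointwise on $S^{n-1}$, together with the rotational symmetry that forces all the integrals $\int_{S^{n-1}}K_{ii}\,d\sigma$ to be equal; equivalently, just compute directly that $\int_{S^{n-1}}\bigl(1-n\,\omega_i^2\bigr)\,d\sigma(\omega)=0$.
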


	We recall a weighted Hessian regularity result and an integrability property of the inverse of the weight $|\nabla u|^{p-2}$.
	
	\begin{thm}[see \cite{DS,MRS}]\label{teosecond}
		Let $\Omega$ be a domain of $\R^n$ and $p>1$. Let $u_\varepsilon$ be a weak solution of \eqref{eq:problregol}, with $f(x)\in W^{1,1}_{loc}(\Omega)\cap C_{loc}^{0,\beta'}(\Omega).$ Then, for any $\tilde{\Omega} \subset\subset \Omega$,  it follows that 
		\begin{equation}\label{der2}
			\int_{\tilde{\Omega}}(\varepsilon+|\nabla u_\varepsilon|^2)^{\frac{p-2-\beta}{2}}\|D^2{u_\varepsilon}\|^2\,dx\leq C
		\end{equation}
		for any $0\le \beta, \beta'<1$, with   $C=C(p,\beta,\tilde \Omega,f,\|\nabla u\|_{L^{\infty}_{loc}(\Omega)})$. 
		
		Moreover, for $p>2$ and for any fixed $1\le q<\frac{p-1}{p-2}$, there exists a constant $C=C(p,q,\tilde \Omega,f,\|\nabla u\|_{L^{\infty}_{loc}(\Omega)})>0$ such that 
		\begin{equation}\label{mezzoinversodelpeso}
			\int_{\tilde \Omega} \frac{f^2}{(\varepsilon+|\nabla u_\varepsilon|^2)^{\frac{q(p-2)}{2}}}\le C.
		\end{equation}
		If $1<p\le 2$, the inequality \eqref{mezzoinversodelpeso} holds for any $1\le q<+\infty.$
	\end{thm}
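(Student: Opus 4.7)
The plan is to establish both estimates by a weighted Cacciopoli-type argument on the differentiated regularized equation. First, differentiating \eqref{eq:problregol} with respect to $x_k$ gives
\begin{equation*}
-\Div\left[(\e + |\nabla u_\e|^2)^{(p-2)/2}\left(I + (p-2)\frac{\nabla u_\e \otimes \nabla u_\e}{\e + |\nabla u_\e|^2}\right)\nabla u_{\e,k}\right] = f_{x_k},
\end{equation*}
where the matrix in brackets is coercive with eigenvalues in $[\min(1,p-1),\max(1,p-1)]$. Testing against $\psi_k = u_{\e,k}\,\phi^2\,(\e + |\nabla u_\e|^2)^{-\beta/2}$, with $\phi \in C_c^\infty(B_{2R}(x_0))$ a cutoff equal to $1$ on $\tilde\Omega$, and summing over $k$, integration by parts produces as principal term
\begin{equation*}
c(p)\int (\e + |\nabla u_\e|^2)^{(p-2-\beta)/2}\|D^2 u_\e\|^2\,\phi^2 \, dx.
\end{equation*}
The cross terms coming from differentiating $\phi^2$ and the weight $(\e + |\nabla u_\e|^2)^{-\beta/2}$ are absorbed via Cauchy--Schwarz and Young's inequality, using crucially that $\beta<1$: the competing exponent differs by exactly $1-\beta$, leaving a residual controlled by $\|\nabla u\|_{L^\infty_{loc}}$, which is uniform in $\e$ by \eqref{evvai2}. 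The source contribution $\int f_{x_k}\psi_k$ is handled using $f \in W^{1,1}_{loc} \cap C^{0,\beta'}_{loc}$.

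For the inverse-weight estimate \eqref{mezzoinversodelpeso}, exploit \eqref{uepsrisolve} to deduce the pointwise bound
\begin{equation*}
\frac{|f|}{(\e + |\nabla u_\e|^2)^{(p-2)/2}} \le |\Delta u_\e| + |p-2|\,\frac{|(D^2 u_\e \nabla u_\e, \nabla u_\e)|}{\e + |\nabla u_\e|^2} \le C(p)\,\|D^2 u_\e\|.
\end{equation*}
Squaring and multiplying by $(\e + |\nabla u_\e|^2)^{(p-2)(2-q)/2}$ one obtains
\begin{equation*}
\frac{f^2}{(\e + |\nabla u_\e|^2)^{q(p-2)/2}} \le C(p)\,(\e + |\nabla u_\e|^2)^{(p-2)(2-q)/2}\|D^2 u_\e\|^2.
\end{equation*}
For $p>2$, set $\beta := (p-2)(q-1)$; the hypothesis $q<(p-1)/(p-2)$ is precisely the condition $\beta<1$, so \eqref{der2} applies and closes the estimate. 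For $1<p\le 2$, the weight $(\e + |\nabla u_\e|^2)^{q(p-2)/2}$ is bounded below thanks to the uniform $L^\infty$ bound on $|\nabla u_\e|$, making the integral trivially finite for all $q\ge 1$.

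The main obstacle lies in the Cacciopoli step: the interplay between the degenerate or singular weight and the derivatives of both the cutoff and the weight itself forces a delicate absorption scheme whose feasibility hinges on the strict inequality $\beta<1$. Any exponent $\beta\ge 1$ would leave a genuine non-integrable singularity at the critical set $\{|\nabla u_\e|=0\}$ that cannot be tamed, regardless of the regularization. Uniformity of all estimates in $\e$ is then essential so that the bounds transfer from $u_\e$ to $u$ by \eqref{evvai2} together with Fatou's lemma.
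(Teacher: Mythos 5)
This theorem is stated in the paper as a recalled result from \cite{DS,MRS}; the paper gives no proof of its own, so there is no internal argument to compare against, only the soundness of your outline.

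Your derivation of \eqref{mezzoinversodelpeso} from \eqref{der2} is correct and tidy. From \eqref{uepsrisolve} one gets the pointwise inequality $|f|(\varepsilon+|\nabla u_\varepsilon|^2)^{-(p-2)/2}\le C(n,p)\,\|D^2u_\varepsilon\|$ (the constant also depends on $n$ through $|\Delta u_\varepsilon|\le\sqrt n\,\|D^2u_\varepsilon\|$, a harmless omission), and the substitution $\beta=(p-2)(q-1)$ translates $1\le q<(p-1)/(p-2)$ exactly into $0\le\beta<1$, so \eqref{der2} closes the estimate for $p>2$. For $1<p\le2$ the exponent $q(p-2)/2\le0$, hence $(\varepsilon+|\nabla u_\varepsilon|^2)^{-q(p-2)/2}\le(1+\|\nabla u_\varepsilon\|_\infty^2)^{q(2-p)/2}$ is bounded and the claim is trivial. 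This is a clean reduction that is arguably shorter than the original arguments in \cite{DS,MRS}.

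The sketch of \eqref{der2} has the right skeleton, but the stated mechanism for handling the weight-gradient term is misleading and, taken literally, would not give the full range $\beta<1$. That term has \emph{exactly the same} homogeneity in both $\|D^2u_\varepsilon\|^2$ and $(\varepsilon+|\nabla u_\varepsilon|^2)^{(p-2-\beta)/2}$ as the principal term; there is no $1-\beta$ gap in the gradient exponent and no ``residual controlled by $\|\nabla u\|_{L^\infty}$'' to speak of, and a crude Cauchy--Schwarz/Young absorption (bounding each side separately by $(\varepsilon+|\nabla u_\varepsilon|^2)^{(p-2-\beta)/2}\|D^2u_\varepsilon\|^2$) only closes when $\beta\,\max(1,p-1)<\min(1,p-1)$, which is strictly stronger than $\beta<1$ for $p\ne2$. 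What actually works is a pointwise algebraic comparison for the \emph{sum} of the principal and weight-gradient contributions. Setting
\[
\tilde A:=I+(p-2)\frac{\nabla u_\varepsilon\otimes\nabla u_\varepsilon}{\varepsilon+|\nabla u_\varepsilon|^2},\quad
P:=\tilde A^{1/2},\quad M:=D^2u_\varepsilon,\quad
w:=\frac{\nabla u_\varepsilon}{(\varepsilon+|\nabla u_\varepsilon|^2)^{1/2}},
\]
one finds the combined integrand equals $(\varepsilon+|\nabla u_\varepsilon|^2)^{(p-2-\beta)/2}\bigl(\|PM\|_F^2-\beta\,|PMw|^2\bigr)\,\psi^2$, and the sharp bound $|PMw|^2\le\|PM\|_F^2|w|^2$ together with $|w|^2<1$ yields a lower bound $\min(1,p-1)(1-\beta)(\varepsilon+|\nabla u_\varepsilon|^2)^{(p-2-\beta)/2}\|D^2u_\varepsilon\|^2\,\psi^2$. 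This is where $\beta<1$ enters, not through a mismatch of weight exponents. The exponent gain you mention does appear elsewhere and legitimately so: for the source term, $|u_{\varepsilon,k}|(\varepsilon+|\nabla u_\varepsilon|^2)^{-\beta/2}\le(\varepsilon+|\nabla u_\varepsilon|^2)^{(1-\beta)/2}$ is bounded precisely because $\beta<1$, and the cutoff cross-term produces $(\varepsilon+|\nabla u_\varepsilon|^2)^{(p-\beta)/2}$ after Young's inequality, bounded since $p>1>\beta$. So the conclusion is right, but be precise about which mechanism handles which term.
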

	\
	
	\

	We conclude this section recalling a result  that will be very useful in the proofs
	of our results.
	
	\begin{thm}[see \cite{DS}]\label{inversodelpeso}
		Let $\Omega \subset \R^{n}$ be domain, $p>1$ and $u \in C^{1,\beta}_{loc}(\Omega)$ be a weak solution to \eqref{eq:problema} with $f \in W^{1,1}_{loc}(\Omega)\cap C^{0,\beta '}_{loc}(\Omega)$ and
		\[
		f\geq \tau >0 \quad in \quad  \Omega\,,\quad  (or\,\,f\leq \tau <0 \quad in \quad  \Omega\,)   .
		\] Then, for any $\Tilde{\Omega} \subset \subset \Omega$, we have that $|Z_u| = 0$ and $$\int_{\tilde \Omega} \frac{1}{|\nabla u|^{(p-1)r}} \, dx \le C,$$
		for any $r<1$ with $C=C(p,r,\tilde \Omega,f,\|\nabla u\|_{L^{\infty}_{loc}(\Omega)})$ positive constant. 
	\end{thm}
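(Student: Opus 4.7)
The plan is to establish the integrability on the regularized approximations $u_\varepsilon$ from \eqref{eq:problregol} with constants independent of $\varepsilon$, and then pass to the limit. After replacing $u$ by $-u$ if necessary, one may assume $f\ge \tau>0$. The target is
\begin{equation*}
\int_{\tilde \Omega} \frac{1}{(\varepsilon + |\nabla u_\varepsilon|^2)^{(p-1)r/2}}\,dx \le C,
\end{equation*}
with $C$ independent of $\varepsilon\in[0,1)$. Fatou's lemma applied along $\varepsilon\to 0$, combined with the locally uniform convergence $\nabla u_\varepsilon \to \nabla u$ recalled in \eqref{evvai2}, then yields the claimed bound for $u$. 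The conclusion $|Z_u|=0$ is an immediate corollary: if $|Z_u|>0$, then $|\nabla u|^{-(p-1)r}=+\infty$ on a set of positive measure, contradicting integrability. So only the integrability estimate has to be proved.

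To get it, set $\alpha:=(p-1)r\in(0,p-1)$, fix a cutoff $\eta\in C_c^\infty(\Omega)$ with $\eta\equiv 1$ on $\tilde\Omega$, and test the weak formulation of \eqref{eq:problregol} with
\begin{equation*}
\psi=\frac{\eta^2}{(\varepsilon+|\nabla u_\varepsilon|^2)^{\alpha/2}},
\end{equation*}
which is Lipschitz and admissible because $u_\varepsilon\in C^2$ and $\varepsilon>0$. Expanding $\nabla\psi$ and using $f\ge\tau>0$ on the left gives
\begin{equation*}
\tau\int \eta^2(\varepsilon+|\nabla u_\varepsilon|^2)^{-\alpha/2}\,dx \le |I_1|+|I_2|,
\end{equation*}
where $I_1$ arises from $\nabla\eta$ and $I_2$ from differentiating the weight. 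Since $\alpha<p-1$, the weight exponent in $I_1$ is positive; combined with the uniform $L^\infty_{loc}$-bound on $\nabla u_\varepsilon$ from \eqref{evvai2}, this gives $|I_1|\le C$ with $C$ independent of $\varepsilon$.

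The term $I_2$ is the crux. Using $|\langle D^2u_\varepsilon \nabla u_\varepsilon,\nabla u_\varepsilon\rangle|\le |D^2u_\varepsilon|(\varepsilon+|\nabla u_\varepsilon|^2)$ and Cauchy--Schwarz with an auxiliary $\beta\in[0,1)$, one splits
\begin{equation*}
|I_2|\le C\bigg(\int \eta^2(\varepsilon+|\nabla u_\varepsilon|^2)^{(p-2-\beta)/2}|D^2u_\varepsilon|^2\bigg)^{\!1/2}\bigg(\int \eta^2(\varepsilon+|\nabla u_\varepsilon|^2)^{(p-2-2\alpha+\beta)/2}\bigg)^{\!1/2}.
\end{equation*}
The first factor is uniformly bounded by \eqref{der2} in Theorem \ref{teosecond}. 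Choose $\beta$ in the window $[\max\{0,\alpha-p+2\},1)$, which is non-empty precisely because $\alpha<p-1$: this forces $(p-2-2\alpha+\beta)/2\ge -\alpha/2$, so the local bound on $|\nabla u_\varepsilon|$ reduces the second factor to $C\bigl(\int \eta^2(\varepsilon+|\nabla u_\varepsilon|^2)^{-\alpha/2}\bigr)^{1/2}$. Young's inequality then absorbs a fraction of $\int \eta^2(\varepsilon+|\nabla u_\varepsilon|^2)^{-\alpha/2}$ onto the left-hand side, closing the $\varepsilon$-uniform estimate.

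The main obstacle is the exponent balance in this Cauchy--Schwarz/Young step: the admissible window $[\max\{0,\alpha-p+2\},1)$ for $\beta$ is non-empty exactly when $\alpha-p+2<1$, i.e., $\alpha<p-1$, i.e., $r<1$, and degenerates at $r=1$. This is precisely why the result is stated for $r<1$ rather than $r=1$. A secondary technical point is ensuring compatibility of the chosen $\beta$ with the weighted Hessian estimate of Theorem \ref{teosecond}, which is automatic since $\beta\in[0,1)$.
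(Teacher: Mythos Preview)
The paper does not supply its own proof of Theorem \ref{inversodelpeso}; it is quoted from \cite{DS} as a preliminary result. Your argument is correct and is essentially the one in \cite{DS}: test the regularized equation with $\eta^{2}(\varepsilon+|\nabla u_\varepsilon|^{2})^{-\alpha/2}$, exploit the sign of $f$ on the right, bound the $\nabla\eta$-term via the uniform $L^\infty_{loc}$ control of $\nabla u_\varepsilon$ and the positivity of the exponent $(p-1-\alpha)/2$, split the Hessian term by Cauchy--Schwarz against the weighted estimate \eqref{der2}, and close by Young's inequality and Fatou. The exponent bookkeeping you give is exactly the one that forces $r<1$.

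Two minor points worth tightening. First, the regularized problem \eqref{eq:problregol} and the convergence \eqref{evvai2} are stated on balls $B_{2R}(x_0)$, so strictly speaking you should run the argument with $\eta\in C_c^\infty(B_{2R}(x_0))$, obtain the bound on $B_R(x_0)$, and then reach a general $\tilde\Omega\subset\subset\Omega$ by a finite covering; this is routine but should be said. Second, for $r\le 0$ the claim is immediate from the local $L^\infty$ bound on $\nabla u$, so one may assume $\alpha=(p-1)r>0$, which you use implicitly when saying the test function is admissible.
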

	
	\section{Main result}\label{risultatiprincipali}
	
	\noindent We start the section by extending a result proven in \cite{MRS}, for solutions $u\in W_{loc}^{1,p}(\Omega)$.

	\begin{proof}[Proof of Theorem \ref{teoremacalderon}]
		We start with the case $p>2$.
		Let $x_0 \in \Omega$ and $R>0$ be fixed such that $B_{2R}(x_0) \subset \subset \Omega.$  Let $\varphi\in C_c^\infty(B_{2R}(x_0))$ such that $\varphi=1$ in $B_R(x_0)$, $\varphi=0$ in $(B_{2R}(x_0))^c$ and $|\nabla \varphi|\leq\frac{2}{R}$ in the annular $B_{2R}(x_0)\setminus B_R(x_0)$. 
		
		\noindent Since $u_\varepsilon\varphi\in W_0^{2,q}(B_{2R}(x_0))$, using Lemma \ref{Zygmund}, and recalling that $u_\varepsilon$ is solution of \eqref{uepsrisolve}, we have 
		\begin{equation}
			\begin{split}
				& \|D^{2}(u_{\epsilon}\varphi)\|_{L^{q}(B_{2R}(x_0))}  \le C(n,q) \|\Delta(u_{\epsilon}\varphi)\|_{L^{q}(B_{2R}(x_0))}
				\\ &= C(n,q) \|\varphi\Delta u_\varepsilon+2(\nabla u_\varepsilon,\nabla \varphi)+u_\varepsilon\Delta\varphi \|_{L^{q}(B_{2R}(x_0))} 
				\\ &\le C(n,q)\left\| (p-2)\frac{(\varphi D^2u_\varepsilon \nabla u_\varepsilon,\nabla u_\varepsilon)}{(\epsilon + |\nabla u_{\epsilon}|^{2})} + \frac{\varphi f}{(\epsilon + |\nabla u_{\epsilon}|^{2})^{\frac{p-2}{2}}}\right\|_{L^{q}(B_{2R}(x_0))}
				\\ &+ C(n,q,R,\| \nabla u\|_{L^{\infty}_{loc}(\Omega)}),
			\end{split}
		\end{equation}
		where in the last inequality we have used the fact that $|\nabla u_\varepsilon|\le C$, for a constant $C$ not depending on $\varepsilon$ (see \cite{DiKaSc}), and $C(n,q,R,\| \nabla u\|_{L^{\infty}_{loc}(\Omega)})$ is 
		a positive constant.
		
		Since $\varphi D^{2}u_{\epsilon} =D^{2}(u_{\epsilon} \, \varphi) -  \nabla u_{\epsilon} \otimes \nabla \varphi  -\nabla \varphi\otimes\nabla u_{\epsilon}-  u_{\epsilon}D^{2}\varphi $, and using Theorem \ref{teosecond}, in particular see \eqref{mezzoinversodelpeso}, we deduce that 
		\begin{equation}
			\begin{split}
				\|D^{2}(u_{\epsilon}\varphi)\|_{L^{q}(B_{2R}(x_0))} & \le C(n,q)(p-2)\|D^{2}(u_{\epsilon}\varphi)\|_{L^{q}(B_{2R}(x_0))}
				\\ & + C(n,q,R,f,p,\| \nabla u\|_{L^{\infty}_{loc}(\Omega)}),
			\end{split}
		\end{equation}
		where $C(n,q,R,f,p,\| \nabla u\|_{L^{\infty}(\Omega)})$ is a positive constant.
		
		Since $p-2 < \frac{1}{C(n,q)}$ and using the fact that 
		\begin{equation}
			\|D^{2}u_{\epsilon}\|_{L^{q}(B_{R}(x_0))}\le \|D^{2}(u_{\epsilon}\varphi)\|_{L^{q}(B_{2R}(x_0))},
		\end{equation}
		we infer that
		
		\begin{equation}
			\|D^{2}u_{\epsilon}\|_{L^{q}(B_{R}(x_0))}\le C(n,q,R,f,p,\| \nabla u\|_{L^{\infty}_{loc}(\Omega)}).
		\end{equation}
		
		\noindent where $C(n,q,R,f,p,\| \nabla u\|_{L^{\infty}_{loc}(\Omega)})$ is a positive constant not depending on $\epsilon.$ 
		\newline Therefore we have
		\begin{equation*}
			\sup_{\epsilon > 0} \|u_{\epsilon}\|_{W^{2,q}(B_R(x_0))} < \infty.
		\end{equation*}
		By Rellich's theorem, up to a subsequence we get
		\begin{equation*}
			u_{\epsilon} \rightharpoonup w \in W^{2,q}(B_R(x_0))
		\end{equation*}
		and almost everywhere in $B_R(x_0).$ Therefore we get
		\begin{equation*}
			u \equiv w \in W^{2,q}(B_R(x_0)).
		\end{equation*}

	\end{proof}
	Next we shall be intersted in the second linearized equation of \eqref{soluzionedebole}
	at any fixed solution $u$, which we can write as follows. For $\varphi \in C^{\infty}_c(\Omega \setminus Z_u)$, taking $\psi =\varphi_{ij}:=(\partial ^2\varphi)/(\partial x_i\partial x_j)$ in \eqref{soluzionedebole}, we get 
	
	\begin{eqnarray} \label{eq:lin sec}
		\nonumber && \int_{\Omega} |\nabla u|^{p-2} (\nabla u_{ij},\nabla \varphi) + (p-2) \int_{\Omega} |\nabla u|^{p-4} (\nabla u,\nabla u_{j})(\nabla u_{i}, \nabla \varphi) \\
		\nonumber &+& (p-2)(p-4) \int_{\Omega} |\nabla u|^{p-6} (\nabla u,\nabla u_{j})(\nabla u_{i},\nabla u)(\nabla u,\nabla \varphi) \\
		\nonumber &+& (p-2) \int_{\Omega} |\nabla u|^{p-4} (\nabla u_{ij},\nabla u)(\nabla u,\nabla \varphi)\\
		\nonumber &+& (p-2) \int_{\Omega} |\nabla u|^{p-4} (\nabla u_{i},\nabla u_{j})(\nabla u,\nabla \varphi) \\
		\nonumber &+& (p-2) \int_{\Omega} |\nabla u|^{p-4} (\nabla u_{i},\nabla u)(\nabla u_{j},\nabla \varphi) = \int_{\Omega} f_{ij} \, \varphi, \\
	\end{eqnarray}
	
	where $f_{ij}:=(\partial ^2 f)/(\partial x_i\partial x_j).$
	\
	
	\
	
	Now we are ready to prove

	\begin{proof}[Proof of Theorem \ref{derivate terze}]
		
		\noindent To facilitate reading, we split the case  $\gamma\ge 1$ from the case $\max\{(p-2)/(p-1),2-p\}<\gamma<1$. 
		
		\textbf{Case $\gamma \ge 1$.}
		
		We start exploiting the second linearized equation \eqref{eq:lin sec}. We use a regularization argument. For $\epsilon > 0$ we define:
		\begin{equation}\label{gepsilon}
			G_{\epsilon}(t) := 
			\begin{cases}
				0               & \text{if} \quad t \in [0, \epsilon]\\
				(2t -2\epsilon) & \text{if} \quad t \in [\epsilon, 2\epsilon]\\
				t               & \text{if} \quad t \in [2\epsilon, \infty)\\
			\end{cases}
		\end{equation}
		if $t > 0$, while $G_{\epsilon}(t) := -G_{\epsilon}(-t)$ if $t \le 0.$ Let $x_0\in \Omega$ and $B=B_R(x_0)\subset \subset \Omega$. Let us consider the cut-off function $\psi_{R} := \psi \in C^{\infty}_{c}(B_{2R}(x_0))$ such that: \begin{equation} \label{eq:psi}
			\begin{cases}
				\psi = 1 & \text{in} \quad B_{R}(x_0)\\
				|\nabla \psi| \le \frac{2}{R} & \text{in} \quad B_{2R}(x_0)\setminus B_{R}(x_0)\\
				\psi = 0 & \text{in} \quad B_{2R}(x_0)^{c}.
			\end{cases}
		\end{equation} 
  Now, let us define 
   \begin{equation}\label{funzionetest}
		\varphi := \frac{G_{\epsilon}(|\nabla u|)^{\alpha+1}}{|\nabla u|} \, |D^2u|^{\gamma-1}u_{ij} \, \psi^{2}.
	\end{equation} 
  So 
		\begin{equation*}
			\begin{split}
				\nabla \varphi &=  |D^2u|^{\gamma-1}\frac{G_\varepsilon(|\nabla u|)^{\alpha+1}}{|\nabla u|}\psi^2 \nabla u_{ij}\\
    &+(\gamma-1)\frac{G_\varepsilon(|\nabla u|)^{\alpha+1}}{|\nabla u|}\psi^2 u_{ij}|D^2u|^{\gamma-3}D^3u\cdot D^2u \\
    &+ G_\varepsilon (|\nabla u|)^{\alpha}((\alpha+1)G'_\varepsilon(|\nabla u|)-G_\varepsilon(|\nabla u|)|\nabla u|^{-1})|D^2u|^{\gamma-1}u_{ij}\psi^2|\nabla u|^{-2} D^2 u\nabla u \\
    &+2\frac{G_\varepsilon(|\nabla u|)^{\alpha+1}}{|\nabla u|}|D^2u|^{\gamma-1}u_{ij}\psi \nabla \psi,\\
			\end{split}
		\end{equation*}
		
		where $D^3u\cdot D^2u$ denotes the vector $(...,\sum_{k,l}u_{klj}u_{kl},...)$, for $j=1,...,n$.

		Testing \eqref{funzionetest} in \eqref{eq:lin sec} we get 
		\begin{equation}\label{eq:sost}
			\begin{split}
				&\int_{\Omega} |\nabla u|^{p-2} G_{\epsilon}(|\nabla u|)^{\alpha+1}|\nabla u|^{-1} |D^2u|^{\gamma-1}|\nabla u_{ij}|^{2} \psi^{2} \, dx\\
            &+(\gamma-1)\int_{\Omega} |\nabla u|^{p-2} G_{\epsilon}(|\nabla u|)^{\alpha+1}|\nabla u|^{-1} |D^2u|^{\gamma-3}(\nabla u_{ij} u_{ij},D^3u\cdot D^2u) \psi^{2} \, dx\\
            &+  \int_{\Omega} |\nabla u|^{p-2} G_\varepsilon (|\nabla u|)^{\alpha}((\alpha+1)G'_\varepsilon(|\nabla u|)-G_\varepsilon(|\nabla u|)|\nabla u|^{-1}) \times\\
                 &  \qquad \qquad \qquad \qquad\times(\nabla u_{ij}, D^2 u\nabla u) |\nabla u|^{-2} |D^2u|^{\gamma-1}u_{ij} \psi^{2} \, dx \\
                 &+ 2 \int_{\Omega} |\nabla u|^{p-2} G_{\epsilon}(|\nabla u|)^{\alpha+1}|\nabla u|^{-1}  (\nabla u_{ij},\nabla \psi) |D^2u|^{\gamma-1}u_{ij} \psi \, dx \\
                 &+ (p-2) \int_{\Omega} |\nabla u|^{p-4}(\nabla u,\nabla u_j)(\nabla u_i, \nabla u_{ij})|D^2u|^{\gamma-1}  G_{\epsilon}(|\nabla u|)^{\alpha+1}|\nabla u|^{-1}   \psi^{2} \, dx \\
				&+ (\gamma-1)(p-2) \int_{\Omega} |\nabla u|^{p-4}(\nabla u,\nabla u_j)(\nabla u_i, D^3u\cdot D^2u)u_{ij}\times\\ &\qquad \qquad \qquad \qquad\times|D^2u|^{\gamma-3}  G_{\epsilon}(|\nabla u|)^{\alpha+1}|\nabla u|^{-1}   \psi^{2} \, dx \\
				&+ (p-2) \int_{\Omega} |\nabla u|^{p-4}(\nabla u,\nabla u_j)(\nabla u_i, D^2 u\nabla u)|\nabla u|^{-2}\times \\ & \qquad  \qquad \times  G_\varepsilon (|\nabla u|)^{\alpha}((\alpha+1)G'_\varepsilon(|\nabla u|)-G_\varepsilon(|\nabla u|)|\nabla u|^{-1}) |D^2u|^{\gamma-1}u_{ij} \psi^{2} \, dx  \\
				&+ 2(p-2) \int_{\Omega} |\nabla u|^{p-4}(\nabla u,\nabla u_j)(\nabla u_i, \nabla \psi)  G_{\epsilon}(|\nabla u|)^{\alpha+1}|\nabla u|^{-1} |D^2u|^{\gamma-1} u_{ij} \psi \, dx \\
				&+ (p-2)(p-4) \int_{\Omega} |\nabla u|^{p-6} (\nabla u,\nabla u_j)(\nabla u_i,\nabla u)(\nabla u,\nabla u_{ij})\times \\ &\qquad \qquad\qquad\qquad \times |D^2u|^{\gamma-1}G_{\epsilon}(|\nabla u|)^{\alpha+1}|\nabla u|^{-1} \psi^{2} \, dx \\
    &+ (\gamma-1)(p-2)(p-4) \int_{\Omega} |\nabla u|^{p-6} (\nabla u,\nabla u_j)(\nabla u_i,\nabla u)\times \\   & \qquad \qquad \qquad  \qquad\times(\nabla u,D^3 u\cdot D^2u) u_{ij}|D^2u|^{\gamma-3}G_{\epsilon}(|\nabla u|)^{\alpha+1}|\nabla u|^{-1}  \psi^{2} \, dx \\
            \end{split}
        \end{equation}    
        
        \begin{equation*}
            \begin{split}
				&+ (p-2)(p-4) \int_{\Omega} |\nabla u|^{p-6} (\nabla u,\nabla u_j)(\nabla u_i,\nabla u) (\nabla u, D^2 u\nabla u)|\nabla u|^{-2} \times 
				\\ & \qquad \qquad \qquad\times  G_\varepsilon (|\nabla u|)^{\alpha}((\alpha+1)G'_\varepsilon(|\nabla u|)-G_\varepsilon(|\nabla u|)|\nabla u|^{-1}) |D^2u|^{\gamma-1} u_{ij} \psi^{2} \, dx \\    
				&+ 2(p-2)(p-4) \int_{\Omega} |\nabla u|^{p-6} (\nabla u,\nabla u_j)(\nabla u_i,\nabla u)(\nabla u,\nabla \psi) \times \\ & \qquad\qquad\qquad\qquad\times G_{\epsilon}(|\nabla u|)^{\alpha+1}|\nabla u|^{-1}  |D^2u|^{\gamma-1}u_{ij}\psi \, dx \\
				&+ (p-2) \int_{\Omega} |\nabla u|^{p-4} (\nabla u_{ij}, \nabla u)^{2} |D^2u|^{\gamma-1}G_{\epsilon}(|\nabla u|)^{\alpha+1}|\nabla u|^{-1} \psi^{2} \, dx \\
				&+ (\gamma-1)(p-2) \int_{\Omega} |\nabla u|^{p-4} (\nabla u_{ij}u_{ij}, \nabla u)(D^3u\cdot D^2u, \nabla u)\times \\& \qquad\qquad\qquad\times|D^2u|^{\gamma-3}G_{\epsilon}(|\nabla u|)^{\alpha+1}|\nabla u|^{-1} \psi^{2} \, dx \\
				&+  (p-2) \int_{\Omega} |\nabla u|^{p-4} (\nabla u_{ij},\nabla u)(\nabla u,D^2 u\nabla u)|\nabla u|^{-2} \times 
				\\ & \qquad  \qquad\times  G_\varepsilon (|\nabla u|)^{\alpha}((\alpha+1)G'_\varepsilon(|\nabla u|)-G_\varepsilon(|\nabla u|)|\nabla u|^{-1}) |D^2u|^{\gamma-1} u_{ij}\psi^{2} \, dx\\
				&+ 2(p-2) \int_{\Omega}|\nabla u|^{p-4} (\nabla u_{ij},\nabla u)(\nabla u,\nabla \psi) G_{\epsilon}(|\nabla u|)^{\alpha+1}|\nabla u|^{-1}    |D^2u|^{\gamma-1}u_{ij}\psi \, dx\\
				&+ (p-2) \int_{\Omega} |\nabla u|^{p-4} (\nabla u_i,\nabla u_j)(\nabla u,\nabla u_{ij}) |D^2u|^{\gamma-1}G_{\epsilon}(|\nabla u|)^{\alpha+1}|\nabla u|^{-1}  \psi^{2} \, dx\\
				&+ (\gamma-1)(p-2) \int_{\Omega} |\nabla u|^{p-4} (\nabla u_i,\nabla u_j)(\nabla u,D^3u\cdot D^2u)\times \\&\qquad\qquad\qquad\qquad\times u_{ij} |D^2u|^{\gamma-3}G_{\epsilon}(|\nabla u|)^{\alpha+1}|\nabla u|^{-1}  \psi^{2} \, dx\\
				&+  (p-2) \int_{\Omega} |\nabla u|^{p-4} (\nabla u_i,\nabla u_j)(\nabla u,D^2 u\nabla u)|\nabla u|^{-2} \times 
				\\ &  \qquad \qquad \qquad\times  G_\varepsilon (|\nabla u|)^{\alpha}((\alpha+1)G'_\varepsilon(|\nabla u|)-G_\varepsilon(|\nabla u|)|\nabla u|^{-1}) |D^2u|^{\gamma-1}u_{ij}\psi^{2} \, dx \\
				&+ 2(p-2) \int_{\Omega} |\nabla u|^{p-4} (\nabla u_i,\nabla u_j)(\nabla u,\nabla \psi) G_{\epsilon}(|\nabla u|)^{\alpha+1}|\nabla u|^{-1} |D^2u|^{\gamma-1}u_{ij}\psi \, dx\\
				&+ (p-2) \int_{\Omega} |\nabla u|^{p-4} (\nabla u_i,\nabla u)(\nabla u_j,\nabla u_{ij}) |D^2 u|^{\gamma-1}G_{\epsilon}(|\nabla u|)^{\alpha+1}|\nabla u|^{-1} \psi^{2} \, dx\\
				&+ (\gamma-1)(p-2) \int_{\Omega} |\nabla u|^{p-4} (\nabla u_i,\nabla u)(\nabla u_j,D^3u\cdot D^2u)\times \\ & \qquad\qquad\qquad\qquad\times u_{ij} |D^2u|^{\gamma-3}G_{\epsilon}(|\nabla u|)^{\alpha+1}|\nabla u|^{-1} \psi^{2} \, dx\\
				&+  (p-2) \int_{\Omega} |\nabla u|^{p-4} (\nabla u_i,\nabla u)(\nabla u_j,D^2 u\nabla u)|\nabla u|^{-2} \times 
				\\ &  \qquad \qquad \qquad\times  G_\varepsilon (|\nabla u|)^{\alpha}((\alpha+1)G'_\varepsilon(|\nabla u|)-G_\varepsilon(|\nabla u|)|\nabla u|^{-1}) |D^2u|^{\gamma-1}u_{ij}\psi^{2} \, dx \\
				&+ 2(p-2) \int_{\Omega} |\nabla u|^{p-4} (\nabla u_i,\nabla u)(\nabla u_j,\nabla  \psi) G_{\epsilon}(|\nabla u|)^{\alpha+1}|\nabla u|^{-1} |D^2u|^{\gamma-1}u_{ij}\psi \, dx\\
				& - \int_{\Omega} f_{ij} G_{\epsilon}(|\nabla u|)^{\alpha+1}|\nabla u|^{-1} |D^2u|^{\gamma-1} u_{ij} \psi^{2} \, dx =:\tilde I_1+\cdot\cdot\cdot+\tilde I_{25}=0
			\end{split}
		\end{equation*} 
		
		Since $G_\varepsilon(t)\le t$ and $G'(t)\le 2$ for any $t>0$, we deduce that 
		\begin{equation}\label{eq:sost2}
            \begin{split}
			& \tilde I_1+\tilde I_2+\tilde I_{13}+\tilde I_{14}= \int_{\Omega} |\nabla u|^{p-2} G_{\epsilon}(|\nabla u|)^{\alpha+1}|\nabla u|^{-1} |D^2u|^{\gamma-1}|\nabla u_{ij}|^{2} \psi^{2} 
			\\
            &+ (\gamma-1)\int_{\Omega} |\nabla u|^{p-2} G_{\epsilon}(|\nabla u|)^{\alpha+1}|\nabla u|^{-1} |D^2u|^{\gamma-3}(\nabla u_{ij} u_{ij},D^3u\cdot D^2u) \psi^{2} \, dx\\
            &+(p-2) \int_{\Omega} |\nabla u|^{p-4} (\nabla u_{ij}, \nabla u)^{2}|D^2u|^{\gamma-1}G_{\epsilon}(|\nabla u|)^{\alpha+1}|\nabla u|^{-1}\psi^{2}\,dx   \\
            &+ (\gamma-1)(p-2) \int_{\Omega} |\nabla u|^{p-4} (\nabla u_{ij}u_{ij}, \nabla u)(D^3u\cdot D^2u, \nabla u)\times\\  &\qquad\qquad\qquad\qquad\times |D^2u|^{\gamma-3}G_{\epsilon}(|\nabla u|)^{\alpha+1}|\nabla u|^{-1}\psi^{2} \, dx
			\\
            &\le C(\alpha,p,\gamma)\int_{\Omega} |\nabla u|^{p-3}G_\varepsilon(|\nabla u|)^{\alpha}  |D^{2}u|^{\gamma+1} |D^3u|  \psi^{2} \,dx \\
		   &+ C(p) \int_{\Omega} |\nabla u|^{p-2}G_\varepsilon(|\nabla u|)^{\alpha}   |D^3u| |\nabla \psi| |D^2 u|^{\gamma} \psi\,dx \\
           &+ C(\alpha,p) \int_{\Omega\setminus Z_{u}} |\nabla u|^{p+\alpha-4} |D^{2}u|^{\gamma+3}  \psi^{2}\,dx \\
			 &+ C(p) \int_{\Omega\setminus Z_{u}} |\nabla u|^{p+\alpha-3} |D^2 u|^{\gamma+2} |\nabla \psi| \psi \,dx\\
			& \qquad     +  \int_{\Omega}  |\nabla u|^{\alpha} |f_{ij}||D^2 u|^{\gamma} \psi^{2}\,dx,
            \end{split}
        \end{equation}

		where $C(p), C(\alpha,p)$ and $ C(\alpha,p,\gamma)$ are positive constants.
  
		Summing on $i,j=1,...,n$ we get 
		
		\begin{equation*}\label{eq:sost232}
        \begin{split}
			& \Bar{I}_1+\Bar{I}_2+\Bar{I}_{13}+\Bar{I}_{14}= \int_{\Omega} |\nabla u|^{p-2} G_{\epsilon}(|\nabla u|)^{\alpha+1}|\nabla u|^{-1} |D^2u|^{\gamma-1}|D^3 u|^{2} \psi^{2} 
			\\  
            &+ (\gamma-1)\int_{\Omega} |\nabla u|^{p-2} G_{\epsilon}(|\nabla u|)^{\alpha+1}|\nabla u|^{-1} |D^2u|^{\gamma-3}|D^3u\cdot D^2u|^2 \psi^{2} \, dx
			\\ 
            &+(p-2) \int_{\Omega} |\nabla u|^{p-4} \sum_{i,j=1}^n(\nabla u_{ij}, \nabla u)^{2}|D^2u|^{\gamma-1}G_{\epsilon}(|\nabla u|)^{\alpha+1}|\nabla u|^{-1}\psi^{2}\,dx   \\  
            &+ (\gamma-1)(p-2) \int_{\Omega} |\nabla u|^{p-4} (D^3u\cdot D^2u, \nabla u)^2 |D^2u|^{\gamma-3}G_{\epsilon}(|\nabla u|)^{\alpha+1}|\nabla u|^{-1}\psi^{2} \, dx
			\\  
            &\le C(n,\alpha,p,\gamma)\int_{\Omega} |\nabla u|^{p-3}G_\varepsilon(|\nabla u|)^{\alpha}   |D^{2}u|^{\gamma+1} |D^3u|  \psi^{2} \,dx \\
            &+ C(n,p) \int_{\Omega} |\nabla u|^{p-2} G_\varepsilon(|\nabla u|)^{\alpha}  |D^3u| |\nabla \psi| |D^2 u|^{\gamma} \psi\,dx \\
        \end{split}
        \end{equation*}
        
        \begin{equation}            
        \begin{split}
            &+ C(n,\alpha,p) \int_{\Omega\setminus Z_{u}} |\nabla u|^{p+\alpha-4} |D^{2}u|^{\gamma+3}  \psi^{2}\,dx \\
			&+ C(n,p) \int_{\Omega\setminus Z_{u}} |\nabla u|^{p+\alpha-3} |D^2 u|^{\gamma+2} |\nabla \psi| \psi \,dx\\
			&+  C(n)\int_{\Omega}  |\nabla u|^{\alpha} |D^2f||D^2 u|^{\gamma} \psi^{2}\,dx,
        \end{split}
		\end{equation}
		where $C(n),C(n,p), C(n,\alpha,p)$ and $ C(n,\alpha,p,\gamma)$ are positive constants.

		In the case $p\ge 2$ and $\gamma\ge 1$, by Cauchy-Schwarz inequality we have
		
		\begin{equation}\label{a_2}
			\begin{split}        
				&\Bar{I}_1+\Bar{I}_2+\Bar{I}_{13}+\Bar{I}_{14} \\
				&\ge \int_{\Omega} |\nabla u|^{p-2} G_{\epsilon}(|\nabla u|)^{\alpha+1}|\nabla u|^{-1}|D^2u|^{\gamma-1} |D^3u|^{2} \psi^{2}\,dx.
			\end{split}
		\end{equation}

		Using Cauchy-Schwarz inequality for $1<p<2$ and $\gamma\ge 1$, we note that
		
		\begin{equation}\label{similvettoriale}
			\begin{split}
				\Bar{I}_{14}=&(\gamma-1)(p-2) \int_{\Omega} |\nabla u|^{p-4} (D^3u\cdot D^2u, \nabla u)^2 |D^2u|^{\gamma-3}G_{\epsilon}(|\nabla u|)^{\alpha+1}|\nabla u|^{-1}\psi^{2} \, dx\\ \ge &(\gamma-1)(p-2)\int_{\Omega} |\nabla u|^{p-2} G_{\epsilon}(|\nabla u|)^{\alpha+1}|\nabla u|^{-1} |D^2u|^{\gamma-3}|D^3u\cdot D^2u|^2 \psi^{2} \, dx.
			\end{split}
		\end{equation}
		Therefore, by \eqref{similvettoriale} and  using Cauchy-Schwarz inequality on $\Bar{I}_{13}$, we get 
		\begin{equation}\label{a_4}
			\begin{split}
				&\Bar{I}_1+\Bar{I}_2+\Bar{I}_{13}+\Bar{I}_{14} \\
				&\ge (p-1)\int_{\Omega} |\nabla u|^{p-2} G_{\epsilon}(|\nabla u|)^{\alpha+1}|\nabla u|^{-1}|D^2u|^{\gamma-1} |D^3u|^{2} \psi^{2}\,dx.
			\end{split}
		\end{equation}
		
		Using  \eqref{a_2} and \eqref{a_4} in \eqref{eq:sost2}, and since  $\gamma\ge 1$,  there exists a positive constant $C(p)$ such that  
		
		\begin{equation}\label{eq:sost4}
			\begin{split}
				C(p)&\int_{\Omega} |\nabla u|^{p-2} G_{\epsilon}(|\nabla u|)^{\alpha+1}|\nabla u|^{-1} |D^2u|^{\gamma-1}| D^3 u|^{2} \psi^{2} \,dx 
				\\ 
            &\le
C(n,\alpha,p,\gamma)\int_{\Omega} |\nabla u|^{p-3} G_\varepsilon(|\nabla u|)^{\alpha}  |D^{2}u|^{\gamma+1} |D^3u|  \psi^{2} \,dx \\            
            &+C(n,p)\int_{\Omega} |\nabla u|^{p-2}G_\varepsilon(|\nabla u|)^{\alpha}   |\nabla \psi||D^2u|^{\gamma} |D^3 u| \psi\,dx \\ 
            \end{split}
            \end{equation}
            \begin{equation*}
            \begin{split}
            &+C(n,\alpha,p) \int_{\Omega \setminus Z_u} |\nabla u|^{p+\alpha-4} |D^{2}u|^{\gamma+3}  \psi^{2}\,dx \\  
				&+C(n,p) \int_{\Omega \setminus Z_u} |\nabla u|^{p+\alpha-3} |D^2 u|^{\gamma+2} |\nabla \psi| \psi \,dx\\
				&+C(n)\int_{\Omega}  |\nabla u|^{\alpha} |D^2f||D^2 u|^{\gamma} \psi^{2}\,dx=: I_1+I_2+I_3+I_4+I_5.
			\end{split}
		\end{equation*}

		\noindent Now we estimate the right hand side of \eqref{eq:sost4}. We estimate the term $I_3$. Using a standard Young inequality we obtain 
		\begin{equation}
			\begin{split}
				I_3 & \le C(n,\alpha,p)\int_{B_{2R}\setminus Z_u} |\nabla u|^{p+\alpha-4} |D^{2}u|^{3+\gamma}\,dx   
				\\ &=C(n,\alpha,p)\int_{B_{2R}\setminus Z_u} |\nabla u|^{p+\alpha-4} |\nabla u|^{\frac{p-2-\beta}{2}} |\nabla u|^{\frac{2-p+\beta}{2}} |D^{2}u| |D^{2}u|^{2+\gamma}\, dx  
				\\ &\le \frac{C(n,\alpha,p)}{2} \int_{B_{2R}\setminus Z_u} |\nabla u|^{p-2-\beta} |D^{2}u|^{2} \,dx \\
				&+ \frac{C(n,\alpha,p)}{2} \int_{B_{2R}\setminus Z_u} |\nabla u|^{2(p-4+\alpha) +2-p+\beta} |D^{2}u|^{4+2\gamma} \,dx 
				\\ &\le C(n,\alpha,p,R,\beta,f, \|\nabla u\|_{L^{\infty}_{loc}(\Omega)}) \\
				&+C(n,\alpha,p) \int_{B_{2R}\setminus Z_u} |\nabla u|^{2(p-4+\alpha) +2-p+\beta} |D^{2}u|^{4+2\gamma} \,dx,   
			\end{split}
		\end{equation}

		\noindent where in the last inequality we have used Theorem \ref{teosecond} and \newline $C(n,\alpha,p,R,\beta,f, \|\nabla u\|_{L^{\infty}_{loc}(\Omega)})$ is a positive constant. Iterating this procedure $N$-times, we get 
		\begin{equation}
			\begin{split}
				I_3& \le NC(n,\alpha,p,R,\beta,f, \|\nabla u\|_{L^{\infty}_{loc}(\Omega)})
				\\ &+C(n,\alpha,p)\int_{B_{2R}\setminus Z_u} |\nabla u|^{2^N(p-4+\alpha) +\sum_{k=1}^{N}2^{k-1}(2-p+\beta)} |D^{2}u|^{q_N} \,dx, 
			\end{split}
		\end{equation}
		
		where $q_N$ is given by 
		
		\begin{equation} \label{definizionediq_n}
			\begin{cases}
				q_N = 2(q_{N-1}-1) \\
				q_0 = 3+\gamma.\\
			\end{cases}
		\end{equation}
		
		Now by Young's inequality with exponents $\left( q/q_N, q/( q-q_N)\right)$ we get 
		
		\begin{equation}
			\begin{split}
				\int_{B_{2R}\setminus Z_u} &|\nabla u|^{2^N(p-4+\alpha) +\sum_{k=1}^{N}2^{k-1}(2-p+\beta)} |D^{2}u|^{q_N} \,dx \\ 
				&\leq C( q)\int_{B_{2R}\setminus Z_u} |\nabla u|^{\frac{ q}{ q-q_N}\left(2^N(p-4+\alpha) +\sum_{k=1}^{N}2^{k-1}(2-p+\beta)\right)} \,dx \\
    &+ C( q) \int_{B_{2R}} |D^2 u|^{q}\,dx\\
			\end{split}
		\end{equation}
		
		where $C(q)$ is a positive constant.
		
		If 
		\begin{equation}\label{zio}
			\frac{ q}{q-q_N}\left(2^N(p-4+\alpha) +\sum_{k=1}^{N}2^{k-1}(2-p+\beta)\right)>-p+1,
		\end{equation}
		and $q>q_N$, from Theorem \ref{inversodelpeso}, we have 
		\begin{equation}\label{h_12}
			\int_{B_{2R}} |\nabla u|^{\frac{ q}{ q-q_N}\left(2^N(p-4+\alpha) +\sum_{k=1}^{N}2^{k-1}(2-p+\beta)\right)} \,dx \le C(N,\alpha,p,q,f,\beta,R,\|\nabla u\|_{L^{\infty}_{loc}(\Omega)}),
		\end{equation}
		where $C(N,\alpha,p,q,f,\beta,R,\|\nabla u\|_{L^{\infty}_{loc}(\Omega)})$ is a positive constant. 
		Moreover, since $q>q_N$, from Theorem \ref{teoremacalderon} we deduce that 
		\begin{equation}\label{h_22}
			\int_{B_{2R}}|D^2 u|^{ q}\,dx \le C(n,p,q,f,R),
		\end{equation}
		where $C(n,p,q,f,R)$ is a positive constant.
		
		By \eqref{h_12} and \eqref{h_22} we estimate 
		\begin{equation}\label{stimaI_32}
			I_3 \le C(\alpha,p,q,n,f,R,\beta,\|\nabla u\|_{L^{\infty}_{loc}(\Omega)},N),
		\end{equation}
		
		where $C(\alpha,p,q,n,f,R,\beta,\|\nabla u\|_{L^{\infty}_{loc}(\Omega)},N)$ is a positive constant.
		
		We note that \eqref{zio} yields, for $\beta$ close to $1$, 
		$$\alpha>\alpha (p,q,N):=\alpha_N:=\frac{(q-q_N)}{2^Nq}(1-p)+\frac{3-p}{2^N}+1.$$
		We note that, if $p<2+1/(q-1)$, that is $q<(p-1)/(p-2)$ we have,  $$\alpha_k<\alpha_{k-1}\quad \forall k\in \N,$$
		and $$\alpha >\alpha_N >1 \quad \forall N\in \N_0.$$

		For the term $I_1$, using weighted Young's inequality and by \eqref{stimaI_32} we get 
		
		\begin{equation}\label{stimaI_1}
			\begin{split}
				I_1&\le \theta\int_{\Omega} |\nabla u|^{p-2}G_{\epsilon}(|\nabla u|)^{\alpha+1}|\nabla u|^{-1} |D^2u|^{\gamma-1}|D^3u|^2\psi^2\,dx\\ &+C(n,\gamma,p,\alpha,\theta)\int_{\Omega\setminus Z_{u}} |\nabla u|^{p+\alpha-4} |D^2 u|^{3+\gamma} \psi^2\,dx 
				\\ &\le \theta\int_{\Omega} |\nabla u|^{p-2}G_{\epsilon}(|\nabla u|)^{\alpha+1}|\nabla u|^{-1} |D^2u|^{\gamma-1}|D^3u|^2\psi^2\,dx \\        &+C(\theta,\gamma,\alpha,p,q,n,f,R,\beta,\|\nabla u\|_{L^{\infty}_{loc}(\Omega)},N),
			\end{split}
		\end{equation}
		where $C(\theta,\gamma,\alpha,p,q,n,f,R,\beta,\|\nabla u\|_{L^{\infty}_{loc}(\Omega)},N)$ is a positive constant. 
		
		For the term $I_2$, since $\alpha>1$, by \eqref{eq:psi} and from Theorem \ref{teoremacalderon} we deduce that
		
		\begin{equation}\label{stimaI_2}
			\begin{split}
				I_2&\le \theta\int_{\Omega} |\nabla u|^{p-2}G_{\epsilon}(|\nabla u|)^{\alpha+1}|\nabla u|^{-1} |D^2u|^{\gamma-1}|D^3u|^2\psi^2\,dx
				\\ &+C(n,p,\theta)\int_{\Omega} |\nabla u|^{p+\alpha-2} |D^2 u|^{1+\gamma} |\nabla\psi|^2\,dx 
				\\ & \le \theta\int_{\Omega} |\nabla u|^{p-2}G_{\epsilon}(|\nabla u|)^{\alpha+1}|\nabla u|^{-1} |D^2u|^{\gamma-1}|D^3u|^2\psi^2\,dx
				\\ &+C(n,p,\theta,R,\|\nabla u\|_{L^{\infty}_{loc}(\Omega)})\int_{B_{2R}}|D^2u|^{1+\gamma}\,dx
				\\ & \le \theta\int_{\Omega} |\nabla u|^{p-2}G_{\epsilon}(|\nabla u|)^{\alpha+1}|\nabla u|^{-1} |D^2u|^{\gamma-1}|D^3u|^2\psi^2\,dx \\ &+C(\theta,\alpha,p,q,n,f,R,\|\nabla u\|_{L^{\infty}_{loc}(\Omega)},N),
			\end{split}
		\end{equation}
		where $C(\theta,\alpha,p,q,n,f,R,\|\nabla u\|_{L^{\infty}_{loc}(\Omega)},N)$ is a positive constant. 
		From Theorem \ref{teosecond}, by \eqref{stimaI_32}, \eqref{stimaI_2} and using standard Young's inequality we estimate
		\begin{equation}\label{stimaI_4}
			\begin{split}
				I_4&\le C(n,p,R)\int_{B_{2R}}|\nabla u|^{p+\alpha-3}|D^2u|^{2+\gamma}\,dx
				\\ & \le\frac{C(n,p,R)}{2}\int_{B_{2R}}|\nabla u|^{p+\alpha-2}|D^2u|^{1+\gamma}\,dx\\
    &+\frac{C(n,p,R)}{2}\int_{B_{2R}}|\nabla u|^{p+\alpha-4}|D^2u|^{3+\gamma}\,dx
				\\ & \le  C(\alpha,p,q,n,f,R,\|\nabla u\|_{L^{\infty}_{loc}(\Omega)},N),
			\end{split}
		\end{equation}
		
		where $C(\alpha,p,q,n,f,R,\|\nabla u\|_{L^{\infty}_{loc}(\Omega)},N)$ is a positive constant.
		
		For the last term $I_5$, by assumptions on $f$ and from Theorem \ref{teoremacalderon} we get
		\begin{equation}\label{stimaI_5}
			\begin{split}
				I_5\le C(n,\alpha,R,&\|\nabla u\|_{L^{\infty}_{loc}(\Omega)})\int_{B_{2R}}|D^2f||D^2u|^{\gamma}\,dx \\ &\le C(\alpha,R,f,n,p,q,\|\nabla u\|_{L^{\infty}_{loc}(\Omega)}),
			\end{split}
		\end{equation}
		
		where $C(\alpha,R,f,n,p,q,\|\nabla u\|_{L^{\infty}_{loc}(\Omega)})$ is a positive constant.
		
		Using \eqref{stimaI_32}, \eqref{stimaI_1}, \eqref{stimaI_2}, \eqref{stimaI_4} and \eqref{stimaI_5} in \eqref{eq:sost4}, we get: 
		
		\begin{equation} 
			\begin{split}			(C(p)-2\theta)&\int_{\Omega} |\nabla u|^{p-2}G_{\epsilon}(|\nabla u|)^{\alpha+1}|\nabla u|^{-1}|D^2u|^{\gamma-1} |D^3u|^{2} \psi^{2}\,dx\\
			&\le  C(\theta,\alpha,\gamma,p,q,n,f,R,\|\nabla u\|_{L^{\infty}_{loc}(\Omega)},N),\\
			\end{split}
		\end{equation}
		
		where $C(\theta,\alpha,\gamma,p,q,n,f,R,\|\nabla u\|_{L^{\infty}_{loc}(\Omega)},N)$ is a positive constant.
		
		Now, by Fatou's Lemma and choosing $\theta$ sufficiently small such that \newline
		$C(p) -2\theta > 0,$ 
		we get:
		
		\begin{equation}
			\int_{B_{R}(x_0)\setminus Z_{u}} |\nabla u|^{p-2+\alpha} |D^2u|^{\gamma-1}|D^3u|^{2} \, dx \le C(\theta,\gamma,\alpha,p,q,n,f,R,\|\nabla u\|_{L^{\infty}_{loc}(\Omega)},N).
		\end{equation}
		Using a covering argument, we are done.
		
		\textbf{Case $\max\{(p-2)/(p-1),2-p\}<\gamma<1$.}
		
		Let us define \begin{equation}\label{funzionetest77}
			\varphi := \frac{G_{\epsilon}(|\nabla u|)^{\alpha+1}}{|\nabla u|} \, G_\tau(|D^2u|)|D^2u|^{\gamma-2}u_{ij} \, \psi^{2},
		\end{equation}
		where $G_\tau,G_\varepsilon,\psi$ are defined in \eqref{gepsilon} and \eqref{eq:psi}. So we obtain 
		\begin{equation}
			\begin{split}
				\nabla \varphi &=  G_\tau(|D^2u|)|D^2u|^{\gamma-2}\frac{G_\varepsilon(|\nabla u|)^{\alpha+1}}{|\nabla u|}\psi^2 \nabla u_{ij}\\ &+\frac{G_\varepsilon(|\nabla u|)^{\alpha+1}}{|\nabla u|}\psi^2 u_{ij}|D^2u|^{\gamma-4}((\gamma-2)G_\tau(|D^2u|)+G'_\tau(|D^2u|)|D^2u|)D^3u\cdot D^2u \\&+ G_\varepsilon (|\nabla u|)^{\alpha}((\alpha+1)G'_\varepsilon(|\nabla u|)-G_\varepsilon(|\nabla u|) |\nabla u|^{-1})\times\\& \qquad\qquad\qquad\times G_\tau(|D^2u|)|D^2u|^{\gamma-2}u_{ij}\psi^2|\nabla u|^{-2} D^2 u\nabla u 
				\\&+2\frac{G_\varepsilon(|\nabla u|)^{\alpha+1}}{|\nabla u|}G_\tau(|D^2u|)|D^2u|^{\gamma-2}u_{ij}\psi \nabla \psi,
			\end{split}
		\end{equation}
		
		\noindent where $D^3u\cdot D^2u$ denotes the vector $(...,\sum_{k,l}u_{klj}u_{kl},...)$, for $j=1,...,n$. Now we set $h_\tau(t):=(\gamma-2)G_\tau(t)+G'_{\tau}(t)t$, for $t\ge 0.$
		Testing \eqref{funzionetest77} in \eqref{eq:lin sec} we get 
  
		\begin{equation}\label{eq:sost77}
			\begin{split}
				&  \int_{\Omega} |\nabla u|^{p-2} G_{\epsilon}(|\nabla u|)^{\alpha+1}|\nabla u|^{-1} G_\tau(|D^2u|)|D^2u|^{\gamma-2}|\nabla u_{ij}|^{2} \psi^{2} \, dx  \\ 
				&  +\int_{\Omega} |\nabla u|^{p-2} G_{\epsilon}(|\nabla u|)^{\alpha+1}|\nabla u|^{-1} |D^2u|^{\gamma-4}h_\tau(|D^2u|)(\nabla u_{ij} u_{ij},D^3u\cdot D^2u) \psi^{2} \, dx\\ 
				&+  \int_{\Omega} |\nabla u|^{p-2} G_\varepsilon (|\nabla u|)^{\alpha}((\alpha+1)G'_\varepsilon(|\nabla u|)-G_\varepsilon(|\nabla u|)|\nabla u|^{-1}) \times 
				\\ & \qquad \qquad \qquad \qquad \qquad\times(\nabla u_{ij}, D^2 u\nabla u) |\nabla u|^{-2} G_\tau(|D^2u|)|D^2u|^{\gamma-2}u_{ij} \psi^{2} \, dx \\
    &+ 2 \int_{\Omega} |\nabla u|^{p-2} G_{\epsilon}(|\nabla u|)^{\alpha+1}|\nabla u|^{-1}  (\nabla u_{ij},\nabla \psi) G_\tau(|D^2u|)|D^2u|^{\gamma-2}u_{ij} \psi \, dx \\
				&+ (p-2) \int_{\Omega} |\nabla u|^{p-4}(\nabla u,\nabla u_j)(\nabla u_i, \nabla u_{ij})G_\tau(|D^2u|)|D^2u|^{\gamma-2} \times\\
				& \qquad \qquad \qquad \qquad \qquad \times G_{\epsilon}(|\nabla u|)^{\alpha+1}|\nabla u|^{-1}   \psi^{2} \, dx \\
				&+ (p-2) \int_{\Omega} |\nabla u|^{p-4}(\nabla u,\nabla u_j)(\nabla u_i, D^3u\cdot D^2u)u_{ij}\times\\ &\qquad \qquad \qquad \qquad \qquad\times|D^2u|^{\gamma-4}h_\tau(|D^2u|)  G_{\epsilon}(|\nabla u|)^{\alpha+1}|\nabla u|^{-1}   \psi^{2} \, dx \\
				&+ (p-2) \int_{\Omega} |\nabla u|^{p-4}(\nabla u,\nabla u_j)(\nabla u_i, D^2 u\nabla u)|\nabla u|^{-2} G_\varepsilon (|\nabla u|)^{\alpha} \times \\ & \qquad  \qquad \times  ((\alpha+1)G'_\varepsilon(|\nabla u|)-G_\varepsilon(|\nabla u|)|\nabla u|^{-1}) G_\tau(|D^2u|)|D^2u|^{\gamma-2}u_{ij} \psi^{2} \, dx  \\
				&+ 2(p-2) \int_{\Omega} |\nabla u|^{p-4}(\nabla u,\nabla u_j)(\nabla u_i, \nabla \psi)  G_{\epsilon}(|\nabla u|)^{\alpha+1}|\nabla u|^{-1} \times \\
				& \qquad \qquad \qquad \qquad \qquad \times G_\tau(|D^2u|)|D^2u|^{\gamma-2} u_{ij} \psi \, dx \\
				&+ (p-2)(p-4) \int_{\Omega} |\nabla u|^{p-6} (\nabla u,\nabla u_j)(\nabla u_i,\nabla u)(\nabla u,\nabla u_{ij})\times \\ &\qquad \qquad\qquad\qquad \times G_\tau(|D^2u|)|D^2u|^{\gamma-2}G_{\epsilon}(|\nabla u|)^{\alpha+1}|\nabla u|^{-1} \psi^{2} \, dx \\
				&+ (p-2)(p-4) \int_{\Omega} |\nabla u|^{p-6} (\nabla u,\nabla u_j)(\nabla u_i,\nabla u)(\nabla u,D^3 u\cdot D^2u) \times \\   & \qquad \qquad \qquad  \qquad\times  u_{ij}|D^2u|^{\gamma-4}h_\tau(|D^2u|)G_{\epsilon}(|\nabla u|)^{\alpha+1}|\nabla u|^{-1}  \psi^{2} \, dx \\
				&+ (p-2)(p-4) \int_{\Omega} |\nabla u|^{p-6} (\nabla u,\nabla u_j)(\nabla u_i,\nabla u) (\nabla u, D^2 u\nabla u)|\nabla u|^{-2}G_\varepsilon (|\nabla u|)^{\alpha} \times 
				\\ & \qquad \qquad\times  ((\alpha+1)G'_\varepsilon(|\nabla u|)-G_\varepsilon(|\nabla u|)|\nabla u|^{-1}) G_\tau(|D^2u|)|D^2u|^{\gamma-2} u_{ij} \psi^{2} \, dx \\
				&+ 2(p-2)(p-4) \int_{\Omega} |\nabla u|^{p-6} (\nabla u,\nabla u_j)(\nabla u_i,\nabla u)(\nabla u,\nabla \psi) \times \\ & \qquad\qquad\qquad\qquad\times G_{\epsilon}(|\nabla u|)^{\alpha+1}|\nabla u|^{-1}  G_\tau(|D^2u|)|D^2u|^{\gamma-2}u_{ij}\psi \, dx \\
				&+ (p-2) \int_{\Omega} |\nabla u|^{p-4} (\nabla u_{ij}, \nabla u)^{2} G_\tau(|D^2u|)|D^2u|^{\gamma-2}G_{\epsilon}(|\nabla u|)^{\alpha+1}|\nabla u|^{-1} \psi^{2} \, dx \\
            \end{split}
            \end{equation}
            \begin{equation*}
            \begin{split}
            &+ (p-2) \int_{\Omega} |\nabla u|^{p-4} (\nabla u_{ij}u_{ij}, \nabla u)(D^3u\cdot D^2u, \nabla u)\times \\& \qquad\qquad\qquad\qquad\times|D^2u|^{\gamma-4}h_\tau(|D^2u|)G_{\epsilon}(|\nabla u|)^{\alpha+1}|\nabla u|^{-1} \psi^{2} \, dx \\
				&+  (p-2) \int_{\Omega} |\nabla u|^{p-4} (\nabla u_{ij},\nabla u)(\nabla u,D^2 u\nabla u)|\nabla u|^{-2} G_\varepsilon (|\nabla u|)^{\alpha} \times 
				\\ & \qquad  \qquad\times  ((\alpha+1)G'_\varepsilon(|\nabla u|)-G_\varepsilon(|\nabla u|)|\nabla u|^{-1}) G_\tau(|D^2u|)|D^2u|^{\gamma-2} u_{ij}\psi^{2} \, dx\\
    &+ 2(p-2) \int_{\Omega}|\nabla u|^{p-4} (\nabla u_{ij},\nabla u)(\nabla u,\nabla \psi) G_{\epsilon}(|\nabla u|)^{\alpha+1}|\nabla u|^{-1} \times \\
				& \qquad \qquad \qquad \qquad \times G_\tau(|D^2u|)|D^2u|^{\gamma-2}u_{ij}\psi \, dx\\
				&+ (p-2) \int_{\Omega} |\nabla u|^{p-4} (\nabla u_i,\nabla u_j)(\nabla u,\nabla u_{ij}) G_\tau(|D^2u|)|D^2u|^{\gamma-2} \times \\
				& \qquad \qquad \qquad \qquad G_{\epsilon}(|\nabla u|)^{\alpha+1}|\nabla u|^{-1}  \psi^{2} \, dx\\
			&+ (p-2) \int_{\Omega} |\nabla u|^{p-4} (\nabla u_i,\nabla u_j)(\nabla u,D^3u\cdot D^2u)\times \\&\qquad\qquad\qquad\qquad\times u_{ij} |D^2u|^{\gamma-4}h_\tau(|D^2u|)G_{\epsilon}(|\nabla u|)^{\alpha+1}|\nabla u|^{-1}  \psi^{2} \, dx\\
				&+  (p-2) \int_{\Omega} |\nabla u|^{p-4} (\nabla u_i,\nabla u_j)(\nabla u,D^2 u\nabla u)|\nabla u|^{-2} G_\varepsilon (|\nabla u|)^{\alpha}\times 
				\\ &   \qquad \qquad\times  ((\alpha+1)G'_\varepsilon(|\nabla u|)-G_\varepsilon(|\nabla u|)|\nabla u|^{-1}) G_\tau(|D^2u|)|D^2u|^{\gamma-2}u_{ij}\psi^{2} \, dx \\
				&+ 2(p-2) \int_{\Omega} |\nabla u|^{p-4} (\nabla u_i,\nabla u_j)(\nabla u,\nabla \psi) G_{\epsilon}(|\nabla u|)^{\alpha+1}|\nabla u|^{-1} \times\\
				&\qquad \qquad \qquad \qquad \times G_\tau(|D^2u|)|D^2u|^{\gamma-2}u_{ij}\psi \, dx\\
				&+ (p-2) \int_{\Omega} |\nabla u|^{p-4} (\nabla u_i,\nabla u)(\nabla u_j,\nabla u_{ij})G_\tau(|D^2u|)|D^2u|^{\gamma-2} \times \\
				&\qquad \qquad \qquad \qquad \times G_{\epsilon}(|\nabla u|)^{\alpha+1}|\nabla u|^{-1} \psi^{2} \, dx\\
				&+ (p-2) \int_{\Omega} |\nabla u|^{p-4} (\nabla u_i,\nabla u)(\nabla u_j,D^3u\cdot D^2u)\times \\ & \qquad\qquad\qquad\qquad\times u_{ij} |D^2u|^{\gamma-4}h_\tau(|D^2u|)G_{\epsilon}(|\nabla u|)^{\alpha+1}|\nabla u|^{-1} \psi^{2} \, dx\\
				&+  (p-2) \int_{\Omega} |\nabla u|^{p-4} (\nabla u_i,\nabla u)(\nabla u_j,D^2 u\nabla u)|\nabla u|^{-2} G_\varepsilon (|\nabla u|)^{\alpha} \times 
				\\ &  \qquad \qquad \qquad\times  ((\alpha+1)G'_\varepsilon(|\nabla u|)-G_\varepsilon(|\nabla u|)|\nabla u|^{-1}) G_\tau(|D^2u|)|D^2u|^{\gamma-2}u_{ij}\psi^{2} \, dx \\
				&+ 2(p-2) \int_{\Omega} |\nabla u|^{p-4} (\nabla u_i,\nabla u)(\nabla u_j,\nabla  \psi) G_{\epsilon}(|\nabla u|)^{\alpha+1}|\nabla u|^{-1} \times\\
				&\qquad \qquad \qquad \qquad \times G_\tau(|D^2u|)|D^2u|^{\gamma-2}u_{ij}\psi \, dx\\
				& - \int_{\Omega} f_{ij} G_{\epsilon}(|\nabla u|)^{\alpha+1}|\nabla u|^{-1} G_\tau(|D^2u|)|D^2u|^{\gamma-2} u_{ij} \psi^{2} \, dx =:I_1+\cdot\cdot\cdot+I_{25}=0
			\end{split}
		\end{equation*} 
		
		\noindent Since $G(t)\le t$ and $G'(t)\le 2$ for any $t\ge 0$, we deduce that 
		
		\begin{equation}\label{eq:sost277}
        \begin{split}
			& I_1+I_2+I_{13}+I_{14}= \int_{\Omega} |\nabla u|^{p-2} G_{\epsilon}(|\nabla u|)^{\alpha+1}|\nabla u|^{-1} G_\tau(|D^2u|)|D^2u|^{\gamma-2}|\nabla u_{ij}|^{2} \psi^{2} 
			\\  
            &+ \int_{\Omega} |\nabla u|^{p-2} G_{\epsilon}(|\nabla u|)^{\alpha+1}|\nabla u|^{-1} |D^2u|^{\gamma-4}h_\tau(|D^2u|) (\nabla u_{ij} u_{ij},D^3u\cdot D^2u) \psi^{2} \, dx
			\\ 
            &+(p-2) \int_{\Omega} |\nabla u|^{p-4} (\nabla u_{ij}, \nabla u)^{2}G_\tau(|D^2u|)|D^2u|^{\gamma-2}G_{\epsilon}(|\nabla u|)^{\alpha+1}|\nabla u|^{-1}\psi^{2}\,dx   \\  
            &+ (p-2) \int_{\Omega} |\nabla u|^{p-4} (\nabla u_{ij}u_{ij}, \nabla u)(D^3u\cdot D^2u, \nabla u)\times\\  &\qquad\qquad\qquad\qquad\times |D^2u|^{\gamma-4}h_\tau(|D^2 u|)G_{\epsilon}(|\nabla u|)^{\alpha+1}|\nabla u|^{-1}\psi^{2} \, dx
			\\  
       &\le C(\alpha,p,\gamma)\int_{\Omega} |\nabla u|^{p-3}G_\varepsilon(|\nabla u|)^{\alpha}  G_\tau(|D^2u|)|D^{2}u|^{\gamma} |D^3u|  \psi^{2} \,dx \\
      &+ C(p) \int_{\Omega} |\nabla u|^{p-3}G_\varepsilon(|\nabla u|)^{\alpha}|D^2u|^{\gamma-1}|h_\tau(|D^2u|)||D^3u\cdot D^2u|\psi^2 \,dx\\
			&+ C(p) \int_{\Omega} |\nabla u|^{p-4}G_\varepsilon(|\nabla u|)^{\alpha}|D^2u|^{\gamma-1}|h_\tau(|D^2u|)||(D^3u\cdot D^2u,\nabla u)|\psi^2 \,dx\\
			&+ C(p) \int_{\Omega} |\nabla u|^{p-2}G_\varepsilon(|\nabla u|)^{\alpha}   |D^3u| |\nabla \psi| G_\tau(|D^2u|)|D^2 u|^{\gamma-1} \psi\,dx \\
			&+ C(\alpha,p) \int_{\Omega\setminus Z_{u}} |\nabla u|^{p+\alpha-4} |D^{2}u|^{\gamma+3}  \psi^{2}\,dx \\
			&+ C(p) \int_{\Omega\setminus Z_{u}} |\nabla u|^{p+\alpha-3} |D^2 u|^{\gamma+2} |\nabla \psi| \psi \,dx\\
			& \qquad \qquad    +  \int_{\Omega}  |\nabla u|^{\alpha} |f_{ij}||D^2 u|^{\gamma} \psi^{2}\,dx,
        \end{split}
		\end{equation}
		where $C(p), C(\alpha,p)$ and $ C(\alpha,p,\gamma)$ are positive constants.
		
		Summing on $i,j=1,...,n$ we get 
		\begin{equation}\label{eq:sost2778}
		\begin{split}
			&\int_{\Omega} |\nabla u|^{p-2} G_{\epsilon}(|\nabla u|)^{\alpha+1}|\nabla u|^{-1} G_\tau(|D^2u|)|D^2u|^{\gamma-2}|D^3u|^{2} \psi^{2} 
			\\ 
            &+ \int_{\Omega} |\nabla u|^{p-2} G_{\epsilon}(|\nabla u|)^{\alpha+1}|\nabla u|^{-1} |D^2u|^{\gamma-4}h_\tau(|D^2u|))|D^3u\cdot D^2u|^2 \psi^{2} \, dx
			\\
   &+(p-2) \int_{\Omega} |\nabla u|^{p-4} \sum_{i,j=1}^n(\nabla u_{ij}, \nabla u)^{2}G_\tau(|D^2u|)|D^2u|^{\gamma-2}G_{\epsilon}(|\nabla u|)^{\alpha+1}|\nabla u|^{-1}\psi^{2}\,dx   \\  &+ (p-2) \int_{\Omega} |\nabla u|^{p-4} (D^3u\cdot D^2u, \nabla u)^2 |D^2u|^{\gamma-4}h_\tau(|D^2 u|)G_{\epsilon}(|\nabla u|)^{\alpha+1}|\nabla u|^{-1}\psi^{2} \, dx
			\\
        \end{split}
        \end{equation}
        \begin{equation*}
        \begin{split}
            &\le C(n,\alpha,p,\gamma)\int_{\Omega} |\nabla u|^{p-3}G_\varepsilon(|\nabla u|)^{\alpha}  G_\tau(|D^2u|)|D^{2}u|^{\gamma} |D^3u|  \psi^{2} \,dx \\ 
            &+ C(n,p) \int_{\Omega} |\nabla u|^{p-3}G_\varepsilon(|\nabla u|)^{\alpha}|D^2u|^{\gamma-1}|h_\tau(|D^2u|)||D^3u\cdot D^2u|\psi^2 \,dx\\
			&+ C(n,p) \int_{\Omega} |\nabla u|^{p-4}G_\varepsilon(|\nabla u|)^{\alpha}|D^2u|^{\gamma-1}|h_\tau(|D^2u|)||(D^3u\cdot D^2u,\nabla u)|\psi^2 \,dx\\
			&+ C(n,p) \int_{\Omega} |\nabla u|^{p-2}G_\varepsilon(|\nabla u|)^{\alpha}   |D^3u| |\nabla \psi| G_\tau(|D^2u|)|D^2 u|^{\gamma-1} \psi\,dx \\
   &+ C(n,\alpha,p) \int_{\Omega\setminus Z_{u}} |\nabla u|^{p+\alpha-4} |D^{2}u|^{\gamma+3}  \psi^{2}\,dx \\
			&+ C(n,p) \int_{\Omega\setminus Z_{u}} |\nabla u|^{p+\alpha-3} |D^2 u|^{\gamma+2} |\nabla \psi| \psi \,dx\\
			& \qquad \qquad    +  \, C(n)\int_{\Omega}  |\nabla u|^{\alpha} |D^2f||D^2 u|^{\gamma} \psi^{2}\,dx=:J_1+\cdot\cdot\cdot +J_7,
        \end{split}
		\end{equation*}
		where $C(n),C(n,p), C(n,\alpha,p)$ and $ C(n,\alpha,p,\gamma)$ are positive constants.
		
		Now we estimate the right-hand side of \eqref{eq:sost2778}. We estimate the term $J_1$. By weighted Young's inequality we get 
		
		\begin{equation}\label{stimaI_177}
			\begin{split}
				J_1&\le \theta\int_{\Omega} |\nabla u|^{p-2}G_{\epsilon}(|\nabla u|)^{\alpha+1}|\nabla u|^{-1} G_\tau(|D^2u|)|D^2u|^{\gamma-2}|D^3u|^2\psi^2\,dx\\ &+C(n,\gamma,p,\alpha,\theta)\int_{\Omega\setminus Z_{u}} |\nabla u|^{p+\alpha-4} |D^2 u|^{3+\gamma} \psi^2\,dx, 
			\end{split}
		\end{equation}
		where $C(n,\gamma,p,\alpha,\theta)$ is a positive constant. 
		
		For the term $J_2$, since $|h_\tau(t)|\le (2-\gamma)G_\tau (t)+G'_\tau (t)t$, for any $t\ge 0$, by \eqref{stimaI_177} and using a weighted Young's inequality we obtain 
		
		\begin{equation}\label{stimaI_277}
			\begin{split}
				J_2&\le C(n,p,\gamma)\int_{\Omega} |\nabla u|^{p-3}G_\varepsilon(|\nabla u|)^{\alpha}  G_\tau(|D^2u|)|D^{2}u|^{\gamma} |D^3u|  \psi^{2} \,dx \\
				&+C(n,p)\int_{\Omega} |\nabla u|^{p-3}G_\varepsilon(|\nabla u|)^{\alpha}|D^2u|^{\gamma-1}G'_\tau(|D^2u|)|D^2u|||D^3u\cdot D^2u|\psi^2\,dx\\
				&\le\theta\int_{\Omega} |\nabla u|^{p-2}G_{\epsilon}(|\nabla u|)^{\alpha+1}|\nabla u|^{-1} G_\tau(|D^2u|)|D^2u|^{\gamma-2}|D^3u|^2\psi^2\,dx\\
&+C(n,\gamma,p,\theta)\int_{\Omega\setminus Z_{u}} |\nabla u|^{p+\alpha-4} |D^2 u|^{3+\gamma} \psi^2\,dx \\
    \end{split}
    \end{equation}
    \begin{equation*}
    \begin{split}
            &+\theta\int_{\Omega} |\nabla u|^{p-2} G_{\epsilon}(|\nabla u|)^{\alpha+1}|\nabla u|^{-1} |D^2u|^{\gamma-4}G'_\tau(|D^2u|)|D^2u||D^3u\cdot D^2u|^2 \psi^{2} \, dx\\
&+C(n,p,\theta)\int_{\Omega\setminus Z_{u}} |\nabla u|^{p+\alpha-4} |D^2 u|^{3+\gamma} \psi^2\,dx \\
			\end{split}
		\end{equation*}
		where $C(n,\gamma,p,\theta)$ and $C(n,p,\theta)$ are positive constants. 
		We estimate the term $J_3$. By \eqref{stimaI_177} and by weighted Young inequality we get 
		
		\begin{equation}\label{StimaJ_3}
			\begin{split}
				J_3 &\le C(n,p,\gamma)\int_{\Omega} |\nabla u|^{p-3}G_\varepsilon(|\nabla u|)^{\alpha}  G_\tau(|D^2u|)|D^{2}u|^{\gamma} |D^3u|  \psi^{2} \,dx \\
				&+C(n,p)\int_{\Omega} |\nabla u|^{p-4}G_\varepsilon(|\nabla u|)^{\alpha}|D^2u|^{\gamma-1}G'_\tau(|D^2u|)|D^2u|||(D^3u\cdot D^2u,\nabla u)|\psi^2\,dx\\
				&\le\theta\int_{\Omega} |\nabla u|^{p-2}G_{\epsilon}(|\nabla u|)^{\alpha+1}|\nabla u|^{-1} G_\tau(|D^2u|)|D^2u|^{\gamma-2}|D^3u|^2\psi^2\,dx\\ 
				&+C(n,\gamma,p,\theta)\int_{\Omega\setminus Z_{u}} |\nabla u|^{p+\alpha-4} |D^2 u|^{3+\gamma} \psi^2\,dx \\
				&+\theta\int_{\Omega} |\nabla u|^{p-4} G_{\epsilon}(|\nabla u|)^{\alpha+1}|\nabla u|^{-1} |D^2u|^{\gamma-4}G'_\tau(|D^2u|)\times\\
    &\qquad\qquad\qquad\qquad\qquad\qquad\qquad\times|D^2u|(D^3u\cdot D^2u,\nabla u)^2 \psi^{2} \, dx\\
				&+C(n,p,\theta)\int_{\Omega\setminus Z_{u}} |\nabla u|^{p+\alpha-4} |D^2 u|^{3+\gamma} \psi^2\,dx ,
			\end{split}
		\end{equation}
		where $C(n,\gamma,p,\theta)$ and C$(n,p,\theta)$ are positive constant. For the term $J_4$, by weighted Young's inequality we have 
		
		\begin{equation}\label{stimaJ_4}
			\begin{split}
				J_4&\le \theta  \int_{\Omega} |\nabla u|^{p-2}G_{\epsilon}(|\nabla u|)^{\alpha+1}|\nabla u|^{-1} G_\tau(|D^2u|)|D^2u|^{\gamma-2}|D^3u|^2\psi^2\,dx\\ 
				&+ C(n,p,\theta)\int_\Omega |\nabla u|^{p-2+\alpha}|D^2u|^{\gamma+1}|\nabla \psi|^2 \,dx,
			\end{split}
		\end{equation}
		where $C(n,p,\theta)$ is a positive constant. 
		Now we set $h_{\tau,\theta}(t):=(\gamma-2)G_\tau(t)+(1-\theta)G'_\tau(t)t$, for any $t\ge 0$. By \eqref{stimaI_177}, \eqref{stimaI_277}, \eqref{StimaJ_3} and \eqref{stimaJ_4} we get
		
		\begin{equation*}
        \begin{split}
			&(1-4\theta)\int_{\Omega} |\nabla u|^{p-2} G_{\epsilon}(|\nabla u|)^{\alpha+1}|\nabla u|^{-1} G_\tau(|D^2u|)|D^2u|^{\gamma-2}|D^3u|^{2} \psi^{2} \,dx
			\\
   &+ \int_{\Omega} |\nabla u|^{p-2} G_{\epsilon}(|\nabla u|)^{\alpha+1}|\nabla u|^{-1} |D^2u|^{\gamma-4}h_{\tau,\theta}(|D^2u|)|D^3u\cdot D^2u|^2 \psi^{2} \, dx
			\\ 
   \end{split}
   \end{equation*}
   \begin{equation}\label{eq:sost27789}
       \begin{split}
            &+(p-2) \int_{\Omega} |\nabla u|^{p-4} \sum_{i=1}^n(\nabla u_{ij}, \nabla u)^{2}G_\tau(|D^2u|)|D^2u|^{\gamma-2}G_{\epsilon}(|\nabla u|)^{\alpha+1}|\nabla u|^{-1}\psi^{2}\,dx   \\ 
            &+ (p-2) \int_{\Omega} |\nabla u|^{p-4} (D^3u\cdot D^2u, \nabla u)^2 |D^2u|^{\gamma-4}h_{\tau,\theta}(|D^2 u|)G_{\epsilon}(|\nabla u|)^{\alpha+1}|\nabla u|^{-1}\psi^{2} \, dx
			\\  
   &\le  C(n,\alpha,p,\gamma,\theta) \int_{\Omega\setminus Z_{u}} |\nabla u|^{p+\alpha-4} |D^{2}u|^{\gamma+3}  \psi^{2}\,dx \\
			&+ C(n,p) \int_{\Omega\setminus Z_{u}} |\nabla u|^{p+\alpha-3} |D^2 u|^{\gamma+2} |\nabla \psi| \psi \,dx\\
	 &+ C(n,p,\theta)\int_\Omega |\nabla u|^{p-2+\alpha}|D^2u|^{\gamma+1}|\nabla \psi|^2 \,dx\\
			& \qquad \qquad    +  C(n)\int_{\Omega}  |\nabla u|^{\alpha} |D^2f||D^2 u|^{\gamma} \psi^{2}\,dx,
        \end{split}
		\end{equation}
		where $C(n,\alpha,p,\gamma,\theta)$ is a positive constant.
		
		Now we split the domain $\Omega$ into three subdomains as follows $$\Omega=\{|D^2u|\le \tau\}\cup \{ \tau<|D^2u|< 2\tau\} \cup \{|D^2u|\ge 2\tau\}=:\Omega_0\cup\Omega_1\cup\Omega_2.$$
		Now we estimate the left-hand side of \eqref{eq:sost27789}. Recalling the definition $G_\tau$ (see \eqref{gepsilon}), we get 
		
		\begin{equation}\label{basta}
			\begin{split}
				& \tilde J_1+\cdot\cdot\cdot+\tilde J_8 \\
				:=& (1-4\theta)\int_{\Omega_1} |\nabla u|^{p-2} G_{\epsilon}(|\nabla u|)^{\alpha+1}|\nabla u|^{-1} G_\tau(|D^2u|)|D^2u|^{\gamma-2}|D^3u|^{2} \psi^{2} \,dx
				\\ +& \int_{\Omega_1} |\nabla u|^{p-2} G_{\epsilon}(|\nabla u|)^{\alpha+1}|\nabla u|^{-1} |D^2u|^{\gamma-4}h_{\tau,\theta}(|D^2u|)|D^3u\cdot D^2u|^2 \psi^{2} \, dx
				\\ 
                +&(p-2) \int_{\Omega_1} |\nabla u|^{p-4} \sum_{i,j=1}^n(\nabla u_{ij}, \nabla u)^{2}G_\tau(|D^2u|)|D^2u|^{\gamma-2}G_{\epsilon}(|\nabla u|)^{\alpha+1}|\nabla u|^{-1}\psi^{2}\,dx   \\
                +& (p-2) \int_{\Omega_1} |\nabla u|^{p-4} (D^3u\cdot D^2u, \nabla u)^2 |D^2u|^{\gamma-4}h_{\tau,\theta}(|D^2 u|)\times\\    &\qquad\qquad\qquad\qquad\qquad\qquad\qquad\qquad\times G_{\epsilon}(|\nabla u|)^{\alpha+1}|\nabla u|^{-1}\psi^{2} \, dx\\
             +& (1-4\theta)\int_{\Omega_2} |\nabla u|^{p-2} G_{\epsilon}(|\nabla u|)^{\alpha+1}|\nabla u|^{-1} G_\tau(|D^2u|)|D^2u|^{\gamma-2}|D^3u|^{2} \psi^{2} \,dx
				\\
    +& \int_{\Omega_2} |\nabla u|^{p-2} G_{\epsilon}(|\nabla u|)^{\alpha+1}|\nabla u|^{-1} |D^2u|^{\gamma-4}h_{\tau,\theta}(|D^2u|)|D^3u\cdot D^2u|^2 \psi^{2} \, dx
				\\ 
                \end{split}
                \end{equation}
                \begin{equation*}  
                \begin{split}     
    +&(p-2) \int_{\Omega_2} |\nabla u|^{p-4} \sum_{i,j=1}^n(\nabla u_{ij}, \nabla u)^{2}G_\tau(|D^2u|)|D^2u|^{\gamma-2}G_{\epsilon}(|\nabla u|)^{\alpha+1}|\nabla u|^{-1}\psi^{2}\,dx   \\ +& (p-2) \int_{\Omega_2} |\nabla u|^{p-4} (D^3u\cdot D^2u, \nabla u)^2 |D^2u|^{\gamma-4}h_{\tau,\theta}(|D^2 u|)\times\\    &\qquad\qquad\qquad\qquad\qquad\qquad\qquad\qquad\times G_{\epsilon}(|\nabla u|)^{\alpha+1}|\nabla u|^{-1}\psi^{2} \, dx.
			\end{split}
		\end{equation*}

		We consider the case $p<2$. For $\max\{(p-2)/(p-1),2-p\}<\gamma<1$ fixed and $\theta>0$ sufficiently small, we note that $h_{\tau,\theta}>0$ in $\Omega_1$ and $h_{\tau,\theta}(t)=(\gamma-1-\theta)t=(\gamma-1-\theta)G_\tau(t)<0$ in $\Omega_2$. Using Cauchy-Schwarz inequality we have 
		\begin{equation}\label{abc}
			\begin{split}
				&  (p-2) \int_{\Omega_1} |\nabla u|^{p-4} (D^3u\cdot D^2u, \nabla u)^2 |D^2u|^{\gamma-4}h_{\tau,\theta}(|D^2 u|)G_{\epsilon}(|\nabla u|)^{\alpha+1}|\nabla u|^{-1}\psi^{2} \, dx 
				\\ & \ge (p-2)\int_{\Omega_1} |\nabla u|^{p-2} G_{\epsilon}(|\nabla u|)^{\alpha+1}|\nabla u|^{-1} |D^2u|^{\gamma-4}h_{\tau,\theta}(|D^2u|)|D^3u\cdot D^2u|^2 \psi^{2} \, dx.
			\end{split}
		\end{equation}
		
		By \eqref{abc} and Cauchy-Schwarz inequality we have 
		
		\begin{equation}\label{pmin2}
			\begin{split}
				& \tilde J_1+\cdot\cdot\cdot+\tilde J_8 
				\\ \ge &  (p-1-4\theta)\int_{\Omega_1} |\nabla u|^{p-2} G_{\epsilon}(|\nabla u|)^{\alpha+1}|\nabla u|^{-1} G_\tau(|D^2u|)|D^2u|^{\gamma-2}|D^3u|^{2} \psi^{2} \,dx
				\\ +& (p-1)\int_{\Omega_1} |\nabla u|^{p-2} G_{\epsilon}(|\nabla u|)^{\alpha+1}|\nabla u|^{-1} |D^2u|^{\gamma-4}h_{\tau,\theta}(|D^2u|)|D^3u\cdot D^2u|^2 \psi^{2} \, dx
				\\ +&  (\gamma+p-2-5\theta)\int_{\Omega_2} |\nabla u|^{p-2} G_{\epsilon}(|\nabla u|)^{\alpha+1}|\nabla u|^{-1} G_\tau(|D^2u|)|D^2u|^{\gamma-2}|D^3u|^{2} \psi^{2} \,dx
				\\ \ge& (\gamma+p-2-5\theta)\int_{\Omega_2} |\nabla u|^{p-2} G_{\epsilon}(|\nabla u|)^{\alpha+1}|\nabla u|^{-1}\times\\
    &\qquad\qquad\qquad\qquad\qquad\qquad\qquad\times G_\tau(|D^2u|)|D^2u|^{\gamma-2}|D^3u|^{2} \psi^{2} \,dx,
			\end{split}
		\end{equation}
		where in the last inequality we have used the fact that $h_{\tau,\theta}>0$ in $\Omega_1$, for $\theta$ sufficiently small.

		In the case $p\ge 2$, by Cauchy-Schwarz inequality we have

		\begin{equation}\label{pmax2}
			\begin{split}
				& \tilde J_1+\cdot\cdot\cdot+\tilde J_8 
				\\ \ge &  (1-4\theta)\int_{\Omega_1} |\nabla u|^{p-2} G_{\epsilon}(|\nabla u|)^{\alpha+1}|\nabla u|^{-1} G_\tau(|D^2u|)|D^2u|^{\gamma-2}|D^3u|^{2} \psi^{2} \,dx
				\\ +&  (\gamma+(p-2)(\gamma-1-\theta)-5\theta)\int_{\Omega_2} |\nabla u|^{p-2} G_{\epsilon}(|\nabla u|)^{\alpha+1}|\nabla u|^{-1} \times \\ & \qquad\qquad\qquad\qquad\qquad\qquad\qquad\times G_\tau(|D^2u|)|D^2u|^{\gamma-2}|D^3u|^{2} \psi^{2} \,dx \\
				\ge &(\gamma+(p-2)(\gamma-1-\theta)-5\theta)\int_{\Omega_2} |\nabla u|^{p-2} G_{\epsilon}(|\nabla u|)^{\alpha+1}|\nabla u|^{-1} \times \\ & \qquad\qquad\qquad\qquad\qquad\qquad\qquad\times G_\tau(|D^2u|)|D^2u|^{\gamma-2}|D^3u|^{2} \psi^{2} \,dx.
			\end{split}
		\end{equation}
		
		Using \eqref{pmin2} and \eqref{pmax2} in \eqref{eq:sost27789}, and since  $\gamma>\max\{(p-2)/(p-1),2-p\}$,  for $\theta>0$ sufficiently small, there exists a positive constant $C(\gamma,p,\theta)$ such that

		\begin{eqnarray} \label{eq:sost277890}
			\nonumber &&  C(\gamma,p,\theta)\int_{\Omega} |\nabla u|^{p-2} G_{\epsilon}(|\nabla u|)^{\alpha+1}|\nabla u|^{-1}  G_\tau(|D^2u|)|D^2u|^{\gamma-2}|D^3u|^{2} \psi^{2} \,dx
			\\ \nonumber &\le&  C(n,\alpha,p,\gamma,\theta) \int_{\Omega\setminus Z_{u}} |\nabla u|^{p+\alpha-4} |D^{2}u|^{\gamma+3}  \psi^{2}\,dx \\
			\nonumber &+& C(n,p) \int_{\Omega\setminus Z_{u}} |\nabla u|^{p+\alpha-3} |D^2 u|^{\gamma+2} |\nabla \psi| \psi \,dx\\
			\nonumber &+&C(n,p,\theta)\int_\Omega |\nabla u|^{p-2+\alpha}|D^2u|^{\gamma+1}|\nabla \psi|^2 \,dx\\
			&& \qquad \qquad    +  C(n)\int_{\Omega}  |\nabla u|^{\alpha} |D^2f||D^2 u|^{\gamma} \psi^{2}\,dx,
		\end{eqnarray}
		
		By Fatou Lemma, for $\tau\rightarrow 0$ we obtain 
		
		\begin{eqnarray} \label{eq:sost2778901}
			\nonumber &&  C(\gamma,p,\theta)\int_{\Omega\setminus \{|D^2u|=0\}} |\nabla u|^{p-2} G_{\epsilon}(|\nabla u|)^{\alpha+1}|\nabla u|^{-1}  |D^2u|^{\gamma-1}|D^3u|^{2} \psi^{2} \,dx
			\\ \nonumber &\le&  C(n,\alpha,p,\gamma,\theta) \int_{\Omega\setminus Z_{u}} |\nabla u|^{p+\alpha-4} |D^{2}u|^{\gamma+3}  \psi^{2}\,dx \\
			\nonumber &+& C(n,p) \int_{\Omega\setminus Z_{u}} |\nabla u|^{p+\alpha-3} |D^2 u|^{\gamma+2} |\nabla \psi| \psi \,dx\\
			\nonumber &+&C(n,p,\theta)\int_\Omega |\nabla u|^{p-2+\alpha}|D^2u|^{\gamma+1}|\nabla \psi|^2 \,dx\\
			&& \qquad \qquad    +  C(n)\int_{\Omega}  |\nabla u|^{\alpha} |D^2f||D^2 u|^{\gamma} \psi^{2}\,dx.
		\end{eqnarray}
		
		Proceeding as in the case $\gamma \ge 1$, we get the thesis.
		
	\end{proof}

	Now we state a result which will be useful to us later
	
	\begin{prop}\label{soluzioneregolarizzata}
		Let $u_\varepsilon\in C^{1,\beta}_{loc}(\Omega)$ be a weak solution of the regularized problem \eqref{eq:problregol}. Let $f,q,p,\alpha$ obey the assumptions of Theorem \ref{derivate terze}. Then for $\gamma\ge 1$, and for any $\tilde \Omega \subset\subset\Omega$, we have that
		\begin{equation}\label{derivateterze55}
			\int_{\tilde \Omega } (\varepsilon+|\nabla u_\varepsilon|^2)^{\frac{p-2+\alpha}{2}} |D^2u_\varepsilon|^{\gamma-1}|D^3u_\varepsilon|^{2} \, dx \le C,
		\end{equation}
		for all $\varepsilon\in (0,1)$, where $C$ is a positive constant not depending on $\varepsilon$.
	\end{prop}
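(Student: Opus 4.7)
The plan is to reprise the proof of Theorem \ref{derivate terze} in the case $\gamma\ge 1$ but applied directly to the classical solution $u_\varepsilon$ of \eqref{uepsrisolve}, tracking constants carefully so that they remain independent of $\varepsilon$. Since $(\varepsilon+|\nabla u_\varepsilon|^2)^{1/2}\ge\sqrt{\varepsilon}>0$ everywhere, the gradient cutoff $G_\varepsilon$ that appears in \eqref{funzionetest} becomes unnecessary: standard bootstrap from $f\in C^{1,\beta'}_{loc}$ gives $u_\varepsilon$ enough regularity to justify the computations classically, and I would test the second linearized equation for $u_\varepsilon$ against
$$\varphi := (\varepsilon+|\nabla u_\varepsilon|^2)^{\alpha/2}\,|D^2u_\varepsilon|^{\gamma-1}(u_\varepsilon)_{ij}\,\psi^2,$$
with $\psi$ the cutoff from \eqref{eq:psi}.

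After expanding $\nabla\varphi$ and summing on $i,j=1,\dots,n$, the Cauchy--Schwarz reductions carried out in \eqref{a_2}, \eqref{similvettoriale} and \eqref{a_4} apply verbatim under $\gamma\ge 1$, regardless of the sign of $p-2$, and isolate on the left the coercive quantity
$$C(p)\int (\varepsilon+|\nabla u_\varepsilon|^2)^{(p-2+\alpha)/2}|D^2u_\varepsilon|^{\gamma-1}|D^3u_\varepsilon|^2\psi^2\,dx.$$
On the right, the analogues of $I_1,\dots,I_5$ in \eqref{eq:sost4} appear, and the iterative Young argument leading to \eqref{stimaI_32} is reproduced step by step: the $N$-th iteration yields a negative power of $(\varepsilon+|\nabla u_\varepsilon|^2)$ multiplied by $|D^2 u_\varepsilon|^{q_N}$, to be controlled by splitting with exponents $(q/q_N,q/(q-q_N))$.

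The crucial point is that every tool invoked in the original proof has an $\varepsilon$-uniform counterpart in this regularized setting: the weighted Hessian bound \eqref{der2} is already stated for $u_\varepsilon$ and is uniform in $\varepsilon$; the negative-power bound \eqref{mezzoinversodelpeso} from Theorem \ref{teosecond} replaces Theorem \ref{inversodelpeso}, with the sign assumption $f\ge\tau>0$ allowing $(\varepsilon+|\nabla u_\varepsilon|^2)^{-s}$ to be bounded by a constant multiple of $f^2(\varepsilon+|\nabla u_\varepsilon|^2)^{-s}/\tau^2$ and then by \eqref{mezzoinversodelpeso}; and the proof of Theorem \ref{teoremacalderon}, which is written for $u_\varepsilon$ in the excerpt, provides an $\varepsilon$-independent $W^{2,q}$ bound playing the role needed in \eqref{h_22}.

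The main obstacle is the bookkeeping required to verify that condition \eqref{zio} on the exponent $\alpha$ --- i.e.\ the threshold \eqref{valorealpha} --- still matches the range of negative powers admissible in \eqref{mezzoinversodelpeso} at every iteration level, and that the constraint $q<(p-1)/(p-2)$ required by \eqref{mezzoinversodelpeso} when $p>2$ is automatic from \eqref{evvai}. Once this is checked, choosing $\theta>0$ small to absorb the $\theta$-weighted third derivative term on the left produces \eqref{derivateterze55} on $B_R(x_0)$ with a constant independent of $\varepsilon$, and a covering argument extends the bound to any $\tilde\Omega\subset\subset\Omega$.
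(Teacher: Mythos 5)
Your plan --- test the second linearized equation for $u_\varepsilon$ against $\varphi=(\varepsilon+|\nabla u_\varepsilon|^2)^{\alpha/2}|D^2u_\varepsilon|^{\gamma-1}(u_\varepsilon)_{ij}\psi^2$, drop the cutoff $G_\varepsilon$ since the regularized weight is bounded below, and redo the Cauchy--Schwarz and iterated Young steps of the proof of Theorem \ref{derivate terze} --- is precisely the sketch given in the paper, and your observation that the proof of Theorem \ref{teoremacalderon} already delivers an $\varepsilon$-uniform $W^{2,q}$ bound for $u_\varepsilon$ is correct.

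There is a genuine gap, however, in your replacement of Theorem \ref{inversodelpeso} by \eqref{mezzoinversodelpeso}. After the $N$-fold Young iteration you must bound
\begin{equation*}
\int_{B_{2R}}(\varepsilon+|\nabla u_\varepsilon|^2)^{E/2}\,dx,\qquad
E=\frac{q}{q-q_N}\Bigl(2^N(p-4+\alpha)+\sum_{k=1}^{N}2^{k-1}(2-p+\beta)\Bigr),
\end{equation*}
uniformly in $\varepsilon$, and \eqref{zio} only guarantees $E>-(p-1)$. Since $\alpha(p,q,N)<1+(3-p)/2^N$ strictly, for $\alpha$ in the admissible range close to $\alpha(p,q,N)$ one actually has $E<0$, so the negative power is not optional. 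Writing $s=-E/2\in(0,(p-1)/2)$, you therefore need $\int(\varepsilon+|\nabla u_\varepsilon|^2)^{-s}\,dx\le C$. For $p>2$, your identification $s=q(p-2)/2$ with $q<(p-1)/(p-2)$ does give exactly this range via \eqref{mezzoinversodelpeso} (and you are right that the constraint on $q$ follows from \eqref{evvai}), so that case works. But for $1<p<2$ the exponent $q(p-2)/2$ is nonpositive for every admissible $q$, so \eqref{mezzoinversodelpeso} only asserts the trivially bounded quantity $\int f^2(\varepsilon+|\nabla u_\varepsilon|^2)^{q(2-p)/2}\,dx\le C$ and gives no control whatsoever on negative powers of $(\varepsilon+|\nabla u_\varepsilon|^2)$. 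Your reduction via $f\ge\tau$ does not help, because the missing ingredient is on the $(\varepsilon+|\nabla u_\varepsilon|^2)^{-s}$ side, not the $f$ side. What is actually needed is the $\varepsilon$-uniform analogue of Theorem \ref{inversodelpeso}, namely $\int_{\tilde\Omega}(\varepsilon+|\nabla u_\varepsilon|^2)^{-(p-1)r/2}\,dx\le C$ for $r<1$ with $C$ independent of $\varepsilon$, which is what \cite{DS} proves for the regularized problem under the sign assumption on $f$ (the paper's Theorem \ref{inversodelpeso} is its $\varepsilon\to 0$ limit). With that substitution in place of \eqref{mezzoinversodelpeso}, the rest of your bookkeeping is sound.
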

	
	\begin{proof}
		The proof is similar to that of Theorem \ref{derivate terze}. However, we will give a sketch of the proof. First we consider the second linearized equation of \eqref{eq:problregol}
		at any fixed solution $u_\varepsilon$, which satisfies 
		\begin{equation}\label{equazione debole3}
			\int_\Omega (\varepsilon+|\nabla u_\varepsilon|^2)^{\frac{p-2}{2}}(\nabla u_\varepsilon,\nabla \psi)\,dx=\int_\Omega f \psi \,dx \quad \forall \psi\in C^{\infty}_c(\Omega).
		\end{equation}
		For $\varphi \in C^{\infty}_c(\Omega )$, taking $\psi :=\varphi_{ij}$ in \eqref{equazione debole3}, we get 
		
		\begin{eqnarray} \label{eq:lin secregolarizzato}
			\nonumber && \int_{\Omega} (\varepsilon+|\nabla u_\varepsilon|^2)^{\frac{p-2}{2}} (\nabla u_{\varepsilon,ij},\nabla \varphi) \\
			\nonumber &+& (p-2) \int_{\Omega} (\varepsilon+|\nabla u_\varepsilon|^2)^{\frac{p-4}{2}} (\nabla u_\varepsilon,\nabla u_{\varepsilon,j})(\nabla u_{\varepsilon,i}, \nabla \varphi) \\
			\nonumber &+& (p-2)(p-4) \int_{\Omega} (\varepsilon+|\nabla u_\varepsilon|^2)^{\frac{p-6}{2}} (\nabla u_\varepsilon,\nabla u_{\varepsilon,j})(\nabla u_{\varepsilon,i},\nabla u_\varepsilon)(\nabla u_\varepsilon,\nabla \varphi) \\
			\nonumber &+& (p-2) \int_{\Omega} (\varepsilon+|\nabla u_\varepsilon|^2)^{\frac{p-4}{2}} (\nabla u_{\varepsilon,ij},\nabla u_\varepsilon)(\nabla u_\varepsilon,\nabla \varphi)\\
			\nonumber &+& (p-2) \int_{\Omega} (\varepsilon+|\nabla u_\varepsilon|^2)^{\frac{p-4}{2}} (\nabla u_{\varepsilon,i},\nabla u_{\varepsilon,j})(\nabla u_\varepsilon,\nabla \varphi) \\
			\nonumber &+& (p-2) \int_{\Omega} (\varepsilon+|\nabla u_\varepsilon|^2)^{\frac{p-4}{2}} (\nabla u_{\varepsilon,i},\nabla u_\varepsilon)(\nabla u_{\varepsilon,j},\nabla \varphi) = \int_{\Omega} f_{ij} \, \varphi. \\
		\end{eqnarray}

		Let us define 
		\begin{equation}\label{funzionetest3}
			\varphi := (\varepsilon+|\nabla u_\varepsilon|^2)^{\frac{\alpha}{2}} \, |D^2u_{\varepsilon}|^{\gamma-1}u_{\varepsilon,ij} \, \psi^{2},
		\end{equation}
		where $\psi$ is defined in \eqref{eq:psi}.
		Substituting \eqref{funzionetest3} in \eqref{eq:lin secregolarizzato}, following the proof of Theorem \ref{derivate terze}, we can deduce 
		\begin{equation}\label{eq:sost94a}
			\begin{split}
				C(p)&\int_{\Omega} (\varepsilon+|\nabla u_\varepsilon|^2)^{\frac{p-2+\alpha}{2}}  |D^2u_\varepsilon|^{\gamma-1}| D^3 u_\varepsilon|^{2} \psi^{2} \,dx 
				\\ &\le C(n,\alpha,p,\gamma)\int_{\Omega} (\varepsilon+|\nabla u_\varepsilon|^2)^{\frac{p-3+\alpha}{2}}  |D^{2}u_\varepsilon|^{\gamma+1} |D^3u_\varepsilon|  \psi^{2} \,dx \\
				&+ C(n,p)\int_{\Omega} (\varepsilon+|\nabla u_\varepsilon|^2)^{\frac{p-2+\alpha}{2}}  |\nabla \psi||D^2u_\varepsilon|^{\gamma} |D^3 u_\varepsilon| \psi\,dx \\
				&+C(n,\alpha,p) \int_{\Omega} (\varepsilon+|\nabla u_\varepsilon|^2)^{\frac{p+\alpha-4}{2}} |D^{2}u_\varepsilon|^{\gamma+3}  \psi^{2}\,dx \\
				&+C(n,p) \int_{\Omega} (\varepsilon+|\nabla u_\varepsilon|^2)^{\frac{p-3+\alpha}{2}} |D^2 u_\varepsilon|^{\gamma+2} |\nabla \psi| \psi \,dx\\
				&+C(n)\int_{\Omega}  (\varepsilon+|\nabla u_\varepsilon|^2)^{\frac{\alpha}{2}} |D^2f||D^2 u_\varepsilon|^{\gamma} \psi^{2}\,dx,
			\end{split}
		\end{equation}
		
		where $C(n,\alpha,p,\gamma)$, $C(n,\alpha,p)$, $C(n,p)$, $C(p)$ and $C(n)$ are positive constants.
		
		Proceeding as in the proof of Theorem \ref{derivate terze}, we get the thesis.

	\end{proof}

	In the case in which the function $f$ does not have a sign we can state the following 
	
	\begin{thm}\label{derivateterze2}
		Let $\Omega\subset \mathbb{R}^n$ be a domain and let $u\in C^{1,\beta}_{loc}(\Omega)$ be a weak solution of \eqref{eq:problema}. 
		
		For $\gamma> \max \{(p-2)/(p-1),2-p\}$, we set 
		\begin{equation} \label{definizionediq_n2}
			\begin{cases}
				q_N = 2(q_{N-1}-1) \\
				q_0 = 3+\gamma.\\
			\end{cases}
		\end{equation}
		Let $q\ge q_0$, with $q_N\le  q< q_{N+1}$ for some $N \in \mathbb{N}_0$ and $p$ be such that: 
		\begin{equation}\label{evvai3}
			2-\frac{1}{C(n,q)}< p <\min \left\{2+\frac{1}{q-1} ,2+\frac{1}{C(n,q)}\right\},
		\end{equation}
		where $C(n,q)$ is given by \eqref{calderonintoduzione}.  Assume \begin{equation}\label{valorealpha2}
			\alpha> \alpha(p,N):=\frac{3-p}{2^N}+1,
		\end{equation}
        if $N=0$, $\alpha\ge 4-p.$
		Let $f(x)\in W^{2,q/(q-\gamma)}_{loc}(\Omega)\cap C^{1,\beta'}_{loc}(\Omega)$. Then, for any $\tilde \Omega \subset \subset \Omega$, for $\gamma\ge 1$, we have that 
		\begin{equation}\label{stima derivata terza3}
			\int_{\tilde \Omega \setminus Z_u} |\nabla u|^{p-2+\alpha} |D^2u|^{\gamma-1}|D^3u|^{2} \, dx \le C,
		\end{equation}
		where $C =C(\gamma,\alpha,p,N,q,n,f,\tilde \Omega,\|\nabla u\|_{L^{\infty}_{loc}(\Omega)})$.
		
		If $ \max \{(p-2)/(p-1),2-p\}<\gamma<1$, for any $\tilde \Omega \subset \subset \Omega$, we deduce that
		\begin{equation}\label{stima derivata terza333}
			\int_{\tilde \Omega \setminus (Z_u\cup \{|D^2u|=0\})} |\nabla u|^{p-2+\alpha} |D^2u|^{\gamma-1}|D^3u|^{2} \, dx \le C,
		\end{equation}
		where $C =C(\gamma,\alpha,p,q,n,f,\tilde \Omega,\|\nabla u\|_{L^{\infty}_{loc}(\Omega)},N)$.
		
	\end{thm}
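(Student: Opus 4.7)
The plan is to follow the strategy of Theorem \ref{derivate terze} verbatim until the estimate of the term $I_3$, and there to replace the use of Theorem \ref{inversodelpeso} (which relies on $f$ having constant sign) by the $L^{\infty}_{loc}$ bound on $\nabla u$. Concretely, I will test the second linearized equation \eqref{eq:lin sec} with the same function \eqref{funzionetest} when $\gamma\ge 1$, and with \eqref{funzionetest77} in the sub-linear range $\max\{(p-2)/(p-1),2-p\}<\gamma<1$, perform the same algebraic manipulations, and obtain the same split into the five terms $I_1,\dots,I_5$ as in \eqref{eq:sost4}. The estimates of $I_1,I_2,I_4,I_5$ in the proof of Theorem \ref{derivate terze} never invoke the sign of $f$ and can be reused unchanged once $I_3$ is controlled.

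The decisive step is the treatment of $I_3$. Following the Cauchy--Schwarz iteration combined with Theorem \ref{teosecond} exactly as in the signed case, after $N$ iterations one arrives at
\[
\int_{B_{2R}\setminus Z_u} |\nabla u|^{\,2^N(p-4+\alpha)+\sum_{k=1}^N 2^{k-1}(2-p+\beta)}\,|D^2u|^{q_N}\,dx.
\]
A short computation shows that, letting $\beta\uparrow 1$, the exponent of $|\nabla u|$ collapses to $2^N(\alpha-1)+(p-3)$, which is non-negative precisely when $\alpha\ge 1+(3-p)/2^N$; this is the hypothesis \eqref{valorealpha2}. In that regime $|\nabla u|^A\le \|\nabla u\|_{L^\infty_{loc}(\Omega)}^A$, so no inverse-weight integrability is required and the appeal to Theorem \ref{inversodelpeso} is simply bypassed. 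The factor $|D^2u|^{q_N}$ is absorbed in two ways: either directly by Theorem \ref{teoremacalderon} when $q=q_N$ (which is now admitted, since the hypothesis here is $q_N\le q<q_{N+1}$), or, when $q_N<q<q_{N+1}$, by Young's inequality with exponents $(q/q_N,\,q/(q-q_N))$ followed by Theorem \ref{teoremacalderon}. The case $N=0$ is the reason for the stricter threshold $\alpha\ge 4-p$: no iteration is performed and one must directly have $p+\alpha-4\ge 0$ in order to bound $|\nabla u|^{p+\alpha-4}$ by $\|\nabla u\|_{L^\infty_{loc}(\Omega)}^{p+\alpha-4}$.

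For the sub-linear range $\max\{(p-2)/(p-1),2-p\}<\gamma<1$, I will reuse the additional cut-off $G_\tau(|D^2u|)$ in the test function \eqref{funzionetest77}, decompose $\Omega=\Omega_0\cup\Omega_1\cup\Omega_2$ according to the size of $|D^2u|$ relative to $\tau$, and run the sign analysis of $h_{\tau,\theta}$ exactly as in the proof of Theorem \ref{derivate terze}. Once the analogue of \eqref{eq:sost2778901} is reached, its right-hand side is controlled by precisely the $I_1,\dots,I_5$-type terms already handled above, and a Fatou argument as $\tau\downarrow 0$ delivers \eqref{stima derivata terza333}.

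The main obstacle I anticipate is purely arithmetic bookkeeping: verifying carefully that, after the iterated Cauchy--Schwarz splitting, the exponent of $|\nabla u|$ reduces to $2^N(\alpha-1)+(p-3)$ and is non-negative exactly under \eqref{valorealpha2}; and tracking the boundary case $q=q_N$ (newly admitted here with respect to Theorem \ref{derivate terze}) so that Theorem \ref{teoremacalderon} is invoked directly without the further Young splitting. Once this is in order, the covering argument and the Fatou passage $\varepsilon\downarrow 0$ conclude the proof exactly as in Theorem \ref{derivate terze}.
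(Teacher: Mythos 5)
Your proposal follows the paper's proof essentially verbatim: the only change relative to Theorem \ref{derivate terze} is that, after the iterated Cauchy--Schwarz/Young splitting on $I_3$, the exponent $2^N(p-4+\alpha)+\sum_{k=1}^N 2^{k-1}(2-p+\beta)$ collapses (as $\beta\uparrow 1$) to $2^N(\alpha-1)+(p-3)\ge 0$ under \eqref{valorealpha2}, so $|\nabla u|$ raised to that power is bounded via $\|\nabla u\|_{L^\infty_{loc}}$, and Theorem \ref{inversodelpeso} is never needed; the remaining factor $\int_B|D^2u|^{q_N}\,dx$ is controlled by Theorem \ref{teoremacalderon}, and $I_1,I_2,I_4,I_5$ and the sub-linear $\gamma$ case are handled exactly as before. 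This is precisely what the paper does (the paper even skips the Young split with exponents $(q/q_N, q/(q-q_N))$ that you optionally mention, passing directly to $\int_B|D^2u|^{q_N}$, but this is an immaterial variant since $q\ge q_N$ on the bounded ball $B$).
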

	
	\begin{proof}
		We consider the case $\gamma>1$, the case $\max\{(p-2)/(p-1),2-p\}<\gamma<1$ is similar. The proof is similar to that of the Theorem \ref{derivate terze}. The key distinction lies in the non-application of Theorem \ref{inversodelpeso}. However, we will give a sketch of the proof for the reader's convenience. 
		
		Let us define 
\begin{equation}\label{funzionetest2}
			\varphi := \frac{G_{\epsilon}(|\nabla u|)^{\alpha+1}}{|\nabla u|} \, |D^2u|^{\gamma-1}u_{ij} \, \psi^{2},
		\end{equation}
		where $G_{\epsilon}$ and $\psi$ are defined in \eqref{gepsilon} and \eqref{eq:psi}.
		Substituting \eqref{funzionetest2} in \eqref{eq:lin sec}, following the proof of the Theorem \ref{derivate terze}, we obtain \eqref{eq:sost4}, that is
\begin{equation}\label{eq:sost94}
			\begin{split}
				C(p)&\int_{\Omega} |\nabla u|^{p-2} G_{\epsilon}(|\nabla u|)^{\alpha+1}|\nabla u|^{-1} |D^2u|^{\gamma-1}| D^3 u|^{2} \psi^{2} \,dx 
				\\ &\le C(n,\alpha,p,\gamma)\int_{\Omega} |\nabla u|^{p-3}G_{\epsilon}(|\nabla u|)^{\alpha}  |D^{2}u|^{\gamma+1} |D^3u|  \psi^{2} \,dx \\
				&+ C(n,p)\int_{\Omega} |\nabla u|^{p-2}G_{\epsilon}(|\nabla u|)^{\alpha}  |\nabla \psi||D^2u|^{\gamma} |D^3 u| \psi\,dx \\
				&+C(n,\alpha,p) \int_{\Omega} |\nabla u|^{p+\alpha-4} |D^{2}u|^{\gamma+3}  \psi^{2}\,dx \\
				&+C(n,p) \int_{\Omega} |\nabla u|^{p+\alpha-3} |D^2 u|^{\gamma+2} |\nabla \psi| \psi \,dx\\
				&+C(n)\int_{\Omega}  |\nabla u|^{\alpha} |D^2f||D^2 u|^{\gamma} \psi^{2}\,dx=:I_1+I_2+I_3+I_4+I_5,
			\end{split}
		\end{equation}
		
		where $C(n,\alpha,p,\gamma)$, $C(n,\alpha,p)$, $C(n,p)$, $C(p)$ and $C(n)$ are positive constants.
		Now we estimate the term $I_3$. Using a standard Young inequality we obtain 
		\begin{equation}
			\begin{split}
				I_3 & \le C(n,\alpha,p)\int_{B_{2R}} |\nabla u|^{p+\alpha-4} |D^{2}u|^{3+\gamma}\,dx   
				\\ &=C(n,\alpha,p)\int_{B_{2R}} |\nabla u|^{p+\alpha-4} |\nabla u|^{\frac{p-2-\beta}{2}} |\nabla u|^{\frac{2-p+\beta}{2}} |D^{2}u| |D^{2}u|^{2+\gamma}\, dx  
				\\ &\le \frac{C(n,\alpha,p)}{2} \int_{B_{2R}} |\nabla u|^{p-2-\beta} |D^{2}u|^{2} \,dx \\
				&+ \frac{C(n,\alpha,p)}{2} \int_{B_{2R}} |\nabla u|^{2(p-4+\alpha) +2-p+\beta} |D^{2}u|^{4+2\gamma} \,dx 
				\\ &\le C(n,\alpha,p,R,\beta,f, \|\nabla u\|_{L^{\infty}_{loc}(\Omega)}) \\
				&+ C(n,\alpha,p) \int_{B_{2R}} |\nabla u|^{2(p-4+\alpha) +2-p+\beta} |D^{2}u|^{4+2\gamma} \,dx,   
			\end{split}
		\end{equation}

		\noindent where in the last inequality we have used Theorem \ref{teosecond} and \newline $C(n,\alpha,p,R,\beta,f, \|\nabla u\|_{L^{\infty}_{loc}(\Omega)})$ is a positive constant. Iterating this procedure $N$-times, we get 
		\begin{equation}
			\begin{split}
				I_3& \le NC(n,\alpha,p,R,\beta,f, \|\nabla u\|_{L^{\infty}_{loc}(\Omega)})
				\\ &+C(n,\alpha,p)\int_{B_{2R}} |\nabla u|^{2^N(p-4+\alpha) +\sum_{k=1}^{N}2^{k-1}(2-p+\beta)} |D^{2}u|^{q_N} \,dx, 
			\end{split}
		\end{equation}
		
		where $q_N$ is given by 
		
		\begin{equation} \label{definizionediq_n3}
			\begin{cases}
				q_N = 2(q_{N-1}-1) \\
				q_0 = 3+\gamma.\\
			\end{cases}
		\end{equation}
		
		For $\beta\approx 1$, we note that $\alpha > (3-p)/2^N+1$ yields
		\begin{equation}\label{newvalore}
			2^N(p-4+\alpha) +\sum_{k=1}^{N}2^{k-1}(2-p+\beta)\ge 0.
		\end{equation}
        We note that if $N=0$, then we have $\alpha\ge 4-p.$
        
		By \eqref{newvalore}, using Theorem \ref{teoremacalderon} we get
		
		\begin{equation}\label{key}
			\begin{split}
				I_3 &\le  NC(n,\alpha,p,R,\beta,f, \|\nabla u\|_{L^{\infty}_{loc}(\Omega)})
				\\ & +C(n,\alpha,p,\|\nabla u\|_{L^{\infty}_{loc}(\Omega)})\int_B |D^2u|^{q_N}\,dx \\& \le C(n,q,\alpha,p,f,q,B,\|\nabla u\|_{L^{\infty}_{loc}(\Omega)},N), 
			\end{split}
		\end{equation}
		
		where $B:=B_{2R}(x_0)\subset\subset\Omega$, with $x_0\in \Omega$, and $C(n,q,\alpha,p,f,q,B,\|\nabla u\|_{L^{\infty}_{loc}(\Omega)},N)$ is a positive constant. Using \eqref{eq:sost94} and proceeding as in the proof of Theorem \ref{derivate terze}, we obtain the thesis.

	\end{proof}
	
	\
	
	\begin{rem}\label{remarkino}
		Under the assumptions of Theorem \ref{derivateterze2}, for a weak solution $u_\varepsilon$ of the regularized problem \eqref{eq:problregol}, the following inequality, for $\gamma\ge 1$, holds
		\begin{equation}\label{derivateterze5}
			\int_{\tilde \Omega } (\varepsilon+|\nabla u_\varepsilon|^2)^{\frac{p-2+\alpha}{2}} |D^2u_\varepsilon|^{\gamma-1}|D^3u_\varepsilon|^{2} \, dx \le C,
		\end{equation}
		for all $\varepsilon\in [0,1)$, where $C$ is a positive constant not depending on $\varepsilon.$ The case $\varepsilon >0$ follows by the same arguments used in the proof of  Theorem \ref{derivateterze2} and Proposition \ref{soluzioneregolarizzata}.
		
	\end{rem}
	\
	
	\

	Now we are ready to prove Theorem \ref{chestress}
	
	\begin{proof}[Proof of Theorem \ref{chestress}]
		First we suppose that $f \ge \tau > 0$ in $\Omega.$ For $\varepsilon>0$ sufficiently small we set \begin{equation}\label{eq:veps}
			V_{\epsilon,i} := h_{\epsilon}(|\nabla u|) |\nabla u|^{k-2} u_{i},
		\end{equation}
		where $k>0,$  $h_{\epsilon}(t):= \epsilon \, h(\frac{t}{\epsilon})$ and $h \in C^{2}(\R^{+})$ is such that 
		\begin{equation}
			h(t) := 
			\begin{cases}
				0               & \text{if} \quad t \in [0,1]\\
				t               & \text{if} \quad t \in [2, \infty),\\
			\end{cases}
		\end{equation}
		with $h(t)\le t$ for $t \in [1,2]$. We note that there exists a positive constant $\tilde C$ such that $h'_{\epsilon}(t) \le \Tilde{C}$ and $h''_{\epsilon}(t) \le \frac{\Tilde{C}}{\epsilon}$ for any $t\ge 0$.

		Then, we have
		\begin{equation}
			\begin{split}
				\frac{\partial V_{\epsilon,i}}{\partial x_j} &= h_{\epsilon}(|\nabla u|) |\nabla u|^{k -2}    u_{ij} + (k-2) h_{\epsilon}(|\nabla u|) |\nabla u|^{k -4} \langle \nabla u_{j}, \nabla u \rangle u_i \\
				&+ h^{'}_{\epsilon}(|\nabla u|) |\nabla u|^{k-3} \langle \nabla u_{j}, \nabla u \rangle u_i.
			\end{split}
		\end{equation}
		We set $B := B_{R}(x_0) \subset \subset \Omega,$ with $x_0 \in \Omega.$ Moreover, for $\tilde \delta>0$, yet to be determined, sufficiently small, we set $r:=1+\tilde\delta$. Since $h_{\epsilon}(t) \le t$ and $h'_{\epsilon}(t) \le \Tilde{C},$ using a Holder inequality with exponents $(\frac{2}{r},\frac{2}{2-r})$ we deduce that
		\begin{equation}\label{eq:veps1}
			\begin{split}
				\int_{B} \left| \frac{\partial V_{\epsilon,i}}{\partial x_j} \right|^{r} \ dx 
				&\le C(k) \int_{B\setminus Z_u} |\nabla u|^{r(k-1)} |D^2u|^{r} \ dx \\
				&\le C(k) |B|^{\frac{2-r}{2}} \left( \int_{B\setminus Z_u} |\nabla u|^{2(k-1)} |D^2u|^{2} \ dx \right)^{\frac{r}{2}} , \\
			\end{split}
		\end{equation}
		where $C(k)$ is a positive constant.
		Now we note that it is possible to find $\beta$ close to $1$ such that
		\begin{equation}
			2(k-1) \ge p-2-\beta \iff k >\frac{p-1}{2}.
		\end{equation}

		So using Theorem \ref{teosecond}  we have that
		\begin{equation}
			\int_{B} \left| \frac{\partial V_{\epsilon,i}}{\partial x_j} \right|^r \ dx \le C,
		\end{equation}
		with $C$ not depending on $\epsilon.$
		Moreover
		\begin{equation}
			\begin{split}
				\frac{\partial V_{\epsilon,i}}{\partial x_j \partial x_l} 
				&= h^{'}_{\epsilon}(|\nabla u|) \langle \nabla u_l, \nabla u \rangle |\nabla u|^{k-3} u_{ij} 
				+ (k-2) h_{\epsilon}(|\nabla u|) |\nabla u|^{k-4} \langle \nabla u_l, \nabla u \rangle u_{ij} \\
				&+h_{\epsilon}(|\nabla u|) |\nabla u|^{k-2} u_{ijl} + (k-2) h^{'}_{\epsilon}(|\nabla u|) \langle \nabla u_l, \nabla u \rangle |\nabla u|^{k-5} \langle \nabla u_j, \nabla u \rangle u_i \\
				&+ (k-2)(k-4) h_{\epsilon}(|\nabla u|) |\nabla u|^{k-6} \langle \nabla u_l, \nabla u \rangle \langle \nabla u_j, \nabla u \rangle u_i\\
				&+ (k-2) h_{\epsilon}(|\nabla u|)  |\nabla u|^{k-4} \left( \langle \nabla u_{jl}, \nabla u \rangle +\langle \nabla u_j, \nabla u_l \rangle \right) u_i \\
				&+ (k-2) h_{\epsilon}(|\nabla u|) |\nabla u|^{k-4} \langle \nabla u_j, \nabla u \rangle u_{il}\\
				&+ h^{''}_{\epsilon}(|\nabla u|) \langle \nabla u_j, \nabla u \rangle \langle \nabla u_l, \nabla u \rangle |\nabla u|^{k-4} u_i\\
				&+ h^{'}_{\epsilon}(|\nabla u|) \left( \langle \nabla u_{jl}, \nabla u \rangle + \langle \nabla u_j, \nabla u_l \rangle \right) |\nabla u|^{k-3} u_i \\
				&+ (k-3) h^{'}_{\epsilon}(|\nabla u|) \langle \nabla u_{j}, \nabla u \rangle \langle \nabla u_{l}, \nabla u \rangle |\nabla u|^{k-5} u_i\\
				&+ h^{'}_{\epsilon}(|\nabla u|) \langle \nabla u_{j}, \nabla u \rangle |\nabla u|^{k-3} u_{il}.
			\end{split}
		\end{equation}
		
		Since $h_{\epsilon}(t) \le t$, $h'_{\epsilon}(t) \le \Tilde{C}$ and $h''_{\epsilon}(t) \le \Tilde{C}/t,$  we deduce that
		\begin{equation}\label{eq:vvvv}
			\begin{split}
				\int_{B} &\left| \frac{\partial V_{\epsilon,i}}{\partial x_j \partial x_l}  \right|^{r} \ dx \\
				&\le C(k) \int_{B\setminus Z_u} |\nabla u|^{r(k-2)} |D^2u|^{2r} \ dx 
				+ C(k) \int_{B\setminus Z_u} |\nabla u|^{r(k-1)} |D^3u|^{r} \ dx\\
				&=: I_1 + I_2 . \\
			\end{split}
		\end{equation}
		
		We estimate the term $I_1$. By applying a standard Young inequality we get
		\begin{equation}
			\begin{split}
				I_1 &= C(k) \int_{B} |\nabla u|^{r(k-2)} |D^2u|^{2r} |D^2u| |D^2u|^{-1} |\nabla u|^{\frac{p-2-\beta}{2}} |\nabla u|^{\frac{2-p+\beta}{2}} \ dx\\
				&\le \frac{C(k)}{2} \int_{B} |\nabla u|^{p-2-\beta} |D^2u|^{2} \ dx  +  \frac{C(k)}{2} \int_{B} |\nabla u|^{2r(k-2)+2-p+\beta} |D^2u|^{4r-2} \ dx \\
			\end{split}
		\end{equation}
		
		Iterating this procedure $\tilde N$-times we obtain
		\begin{equation}
			\begin{split}
				I_1 & \le C(k,\tilde N) \int_{B} |\nabla u|^{p-2-\beta} |D^2u|^{2} \ dx \\
				&+  C(k,\tilde N) \int_{B} |\nabla u|^{2^{\tilde N}r(k-2)+\sum_{s=1}^{\tilde N}2^{s-1}(2-p+\beta)} |D^2u|^{\tilde q_{\tilde N}} \ dx, 
			\end{split}
		\end{equation}
		where $\tilde q_{\tilde N}$ is given by 
		
		\begin{equation} \label{definizionediq_n4}
			\begin{cases}
				\tilde q_{\tilde N} = 2(\tilde q_{\tilde N-1}-1) \\
				q_0 = 2r,\\
			\end{cases}
		\end{equation}
		and  $C(k,\tilde N)$ is a positive constant. 
		
		For $\beta \approx 1$, we get 
		\begin{equation}\label{mah}
			2^{\tilde N}r(k-2)+\sum_{s=1}^{\tilde N}2^{s-1}(2-p+\beta)\ge 0 \iff k>\frac{3-p}{2^{\tilde N}r}+\frac{p-3+2r}{r}.
		\end{equation}
		Now we fixed $k>(\alpha +1)/2$. We note that, under assumptions on $p$ (see \eqref{valorip}), we have $(\alpha+1)/2\ge p-1$.
		Recalling $r=1+\tilde \delta$, for any $\tilde N$ sufficiently large, there exists $\tilde \delta=\tilde \delta(\tilde N)>0$, sufficiently small such that $\tilde q_{\tilde N}\le 4.$ Moreover, by \eqref{mah}, for any $\tilde N$ sufficiently large, there exists $\tilde \varepsilon>0$ sufficiently small 
		\begin{equation}\label{mah2}
			k>p-1+\tilde\varepsilon.
		\end{equation}
		Using Theorem \ref{teosecond} and  Theorem \ref{teoremacalderon} we get
		\begin{equation}\label{eq:uno}
			I_1  \le C +  C \int_{B}  |D^2u|^{\tilde q_{\tilde N}} \,dx \le C,
		\end{equation}
		where $C$ is a positive constant not depending on $\epsilon.$ \newline
		We estimate the term $I_2.$ Using a Holder inequality with exponents $\left( \frac{2}{r}, \frac{2}{2-r} \right)$ we get
		\begin{equation}
			\begin{split}
				I_2 &= \int_{B} |\nabla u|^{r(k-\frac{1}{2}(p+\alpha))} |\nabla u|^{\frac{r}{2}(p-2+\alpha)} |D^3u|^{r} \ dx \\
				&\le \left( \int_{B} |\nabla u|^{\frac{2r}{2-r}(k-\frac{1}{2}(p+\alpha))} \ dx \right)^{\frac{2-r}{2}} \left( \int_{B} |\nabla u|^{p-2+\alpha} |D^3u|^{2} \ dx \right)^{\frac{r}{2}}. 
			\end{split}
		\end{equation}
		We note that for any $\tilde N$ sufficiently large there exist $\hat \varepsilon=\hat \varepsilon(\tilde N)$ sufficiently small, such that 
		\begin{equation}\label{mih1}
			\frac{2r}{2-r}(k-\frac{1}{2}(p+\alpha))>1-p \iff k>\frac{\alpha+1}{2}+\hat \varepsilon.
		\end{equation}
		
		By \eqref{mih1}, using Theorem \ref{inversodelpeso} and Theorem \ref{derivate terze}
		we get
		\begin{equation}\label{eq:due}
			I_2 \le C,
		\end{equation}
		where $C$ is a positive constant not depending on $\epsilon.$ 
		
		So by \eqref{eq:uno} and \eqref{eq:due}, for any $k > (\alpha+1)/2$, we have that 
		\begin{equation}
			\int_{B} \left| \frac{\partial V_{\epsilon,i}}{\partial x_j \partial x_l}  \right|^{r} \ dx \le C,
		\end{equation}
		where $C$ is a positive constant not depending on $\epsilon.$ \newline
		Since $W^{2,r}_{loc}(\Omega)$ is reflexive space, there exists 
		$\Tilde{V} \in W^{2,r}_{loc}(\Omega)$ such that $$V_{\epsilon,i} \rightharpoonup \Tilde{V} \quad \text{for  } \epsilon \rightarrow 0.$$
		By the compact embedding, $V_{\epsilon,i} \rightarrow \Tilde{V}$ in $L^{q}(\Omega)$ with $q<2^{*}$ and up to subsequence $V_{\epsilon,i} \rightarrow \Tilde{V}$ a.e. in $\Omega.$ 
		Since $V_{\epsilon,i} \rightarrow |\nabla u|^{k-1} u_i$ a.e. in $\Omega$, then $$\tilde V=|\nabla u|^{k-1} u_i  \in W^{2,r}_{loc}(\Omega)\subset  W^{2,1}_{loc}(\Omega).$$
		\newline
		In the case $f$ has no sign we cannot apply Theorem \ref{inversodelpeso}. So by similar computations we get
		\begin{equation}\label{eq:vvvv2}
			\begin{split}
				\int_{B} &\left| \frac{\partial V_{\epsilon,i}}{\partial x_j \partial x_l}  \right|^{r} \ dx \\
				&\le C(k) \int_{B} |\nabla u|^{r(k-2)} |D^2u|^{2r} \ dx 
				+ C(k) \int_{B} |\nabla u|^{r(k-1)} |D^3u|^{r} \ dx 
				=: I_1 + I_2 . \\
			\end{split}
		\end{equation}
		
		Now the term $I_1$ can be estimated as in the previous case. For the term $I_2$, from Theorem \ref{derivateterze2} and for $k\ge (p+\alpha)/2$, applying a Holder inequality with exponents $(\frac{2}{r}, \frac{2}{2-r})$ we get
		\begin{equation}
			\begin{split}
				I_2 &= \int_{B} |\nabla u|^{r(k-\frac{1}{2}(p+\alpha))} |\nabla u|^{\frac{r}{2}(p-2+\alpha)} |D^3u|^{r} \ dx \\
				&\le \left( \int_{B} |\nabla u|^{\frac{2r}{2-r}(k-\frac{1}{2}(p+\alpha))} \ dx \right)^{\frac{2-r}{2}} \left( \int_{B} |\nabla u|^{p-2+\alpha} |D^3u|^{2} \ dx \right)^{\frac{r}{2}}\le C, 
			\end{split}
		\end{equation}
		where $C$ is a positive constant not depending on $\varepsilon$. Proceeding as in the previous case we get the thesis.
	\end{proof}

	Using Theorem \ref{derivateterze2} we can prove

	\begin{proof}[Proof of Theorem \ref{stressfield}]
		Let $x_0 \in \Omega$ and $R>0$ such that $B := B_{2R}(x_0) \subset \subset \Omega.$ Let us consider $u_{\epsilon}\in u+W^{1,p}_0(\Omega)$ be a solution of 
		\begin{equation}\label{okk}
			\int_B(\epsilon +|\nabla u_{\epsilon}|^{2})^{\frac{p-2}{2}} \, (\nabla u_{\epsilon},\nabla \varphi) \,dx = \int_B f\varphi\,dx \quad \forall  \varphi\in C^{\infty}_c(\Omega).
		\end{equation} 
		We remark that if $f \in C^{1,\beta'}_{loc}(\Omega)$, by standard regularity result \cite{DiKaSc},\cite{GT} $u_{\epsilon} \in C^{3}(B).$ We fix $i=1,...,n$ and we use $\varphi_i \in C^{\infty}_{c}(\Omega)$ in \eqref{okk}. Integrating by parts we get
		
		\begin{equation}  \label{eq:primoordine}
			\begin{split}
				&\int_{B} (\epsilon + |\nabla u_{\epsilon}|^2)^{\frac{p-2}{2}} (\nabla u_{\epsilon,i}, \nabla \varphi)\,dx \\ &+(p-2) \int_B   (\epsilon + |\nabla u_{\epsilon}|^2)^{\frac{p-4}{2}} (\nabla u_{\epsilon}, \nabla u_{\epsilon,i}) (\nabla u_{\epsilon}, \nabla \varphi)\,dx = \int_{B} f_i \varphi\,dx.
			\end{split}
		\end{equation}
		
		We prove \eqref{eq:tesi} in $B_R(x_0)$. The result will follow by a covering argument.  
		
		For $i,k,l \in \{1,...,n\}$ let us define
		\begin{equation}
			\varphi := (\epsilon + |\nabla u_{\epsilon}|^2)^{\frac{(\Tilde{\alpha}-1)(p-2)}{2}} u_{\epsilon,i} |u_{\epsilon,kl}|^{\Tilde{\alpha}-2} \psi^2,
		\end{equation}
		where $\Tilde{\alpha} \ge 3$ and $\psi$ is the standard cut-off function defined in \eqref{eq:psi}.

		So
		\begin{equation} \label{eq:nabla}
			\begin{split}
				\nabla \varphi &= (\Tilde{\alpha} -1)(p-2) (\epsilon + |\nabla u_{\epsilon}|^2)^{\frac{(\Tilde{\alpha}-1)(p-2)-2}{2}} D^2u_{\epsilon} \nabla u_{\epsilon} u_{\epsilon,i} |u_{\epsilon,kl}|^{\Tilde{\alpha}-2} \psi^{2} \\
				&+ (\epsilon + |\nabla u_{\epsilon}|^2)^{\frac{(\Tilde{\alpha}-1)(p-2)}{2}} \nabla u_{\epsilon,i} |u_{\epsilon,kl}|^{\Tilde{\alpha}-2} \psi^{2}\\
				&+ (\Tilde{\alpha}-2) (\epsilon + |\nabla u_{\epsilon}|^2)^{\frac{(\Tilde{\alpha}-1)(p-2)}{2}} u_{\epsilon,i} |u_{\epsilon,kl}|^{\Tilde{\alpha}-3} \operatorname{sign}(u_{\epsilon,kl}) \nabla u_{\epsilon,kl} \psi^{2}\\
				&+ (\epsilon + |\nabla u_{\epsilon}|^2)^{\frac{(\Tilde{\alpha}-1)(p-2)}{2}} u_{\epsilon,i} |u_{\epsilon,kl}|^{\Tilde{\alpha}-2} 2 \psi \nabla \psi. \\
			\end{split}
		\end{equation}
		Substituting \eqref{eq:nabla} in \eqref{eq:primoordine} we get
		\begin{equation*}
			\begin{split}
				& (\Tilde{\alpha}-1)(p-2) \int_{B} (\epsilon + |\nabla u_{\epsilon}|^2)^{\frac{p-2}{2}} (\nabla u_{\epsilon,i}, D^2u_{\epsilon}\nabla u_{\epsilon}) \times \\
				& \qquad \qquad \qquad\qquad \qquad \qquad \qquad \times (\epsilon + |\nabla u_{\epsilon}|^2)^{\frac{(\Tilde{\alpha}-1)(p-2)-2}{2}} u_{\epsilon,i} |u_{\epsilon,kl}|^{\Tilde{\alpha}-2}\psi^2 \ dx \\
				&+ \int_{B} (\epsilon + |\nabla u_{\epsilon}|^2)^{\frac{p-2}{2}} |\nabla u_{\epsilon,i}|^{2} (\epsilon + |\nabla u_{\epsilon}|^2)^{\frac{(\Tilde{\alpha}-1)(p-2)}{2}} |u_{\epsilon,kl}|^{\tilde \alpha -2} \psi^2 \ dx\\
				&+(\Tilde{\alpha}-2) \int_{B} (\epsilon + |\nabla u_{\epsilon}|^2)^{\frac{p-2}{2}} (\nabla u_{\epsilon,i},\nabla u_{\epsilon,kl}) (\epsilon + |\nabla u_{\epsilon}|^2)^{\frac{(\Tilde{\alpha}-1)(p-2)}{2}} \times \\
				& \qquad \qquad \qquad \qquad \qquad\qquad \qquad \times u_{\epsilon,i} |u_{\epsilon,kl}|^{\Tilde{\alpha }-3} \operatorname{sign}(u_{\epsilon,kl}) \psi^2 \ dx  \\
                &+ 2\int_{B} (\epsilon + |\nabla u_{\epsilon}|^2)^{\frac{p-2}{2}} (\nabla u_{\epsilon,i}, \nabla \psi) (\epsilon + |\nabla u_{\epsilon}|^2)^{\frac{(\Tilde{\alpha}-1)(p-2)}{2}} u_{\epsilon,i} |u_{\epsilon,kl}|^{\tilde \alpha -2}  \psi \ dx \\
                &+ (\tilde \alpha-1)(p-2)^2 \int_{B} (\epsilon + |\nabla u_{\epsilon}|^2)^{\frac{p-4}{2}} (\nabla u_{\epsilon},\nabla u_{\epsilon,i}) (\nabla u_{\epsilon}, D^2u_{\epsilon}\nabla u_{\epsilon})\times \\ & \quad \quad \quad \quad \quad \quad \quad \quad \quad \quad \quad \quad \quad \quad \times (\epsilon + |\nabla u_{\epsilon}|^2)^{\frac{(\Tilde{\alpha}-1)(p-2)-2}{2}} u_{\epsilon,i} |u_{\epsilon,kl}|^{\tilde \alpha -2} \psi^2 \ dx\\
                \end{split}
                \end{equation*}
                \begin{equation*}
                \begin{split}
    &+ (p-2) \int_{B} (\epsilon + |\nabla u_{\epsilon}|^2)^{\frac{p-4}{2}} (\nabla u_{\epsilon},\nabla u_{\epsilon,i})^2 (\epsilon + |\nabla u_{\epsilon}|^2)^{\frac{(\tilde \alpha -1)(p-2)}{2}} |u_{\epsilon,kl}|^{\tilde \alpha -2}  \psi^2 \ dx \\
				&+ (\Tilde{\alpha}-2)(p-2) \int_{B} (\epsilon + |\nabla u_{\epsilon}|^2)^{\frac{p-4}{2}} (\nabla u_{\epsilon},\nabla u_{\epsilon,i}) (\nabla u_{\epsilon},\nabla u_{\epsilon,kl}) \times\\ & \quad \quad \quad \quad \quad \quad \quad \quad \quad \quad \quad \quad \times (\epsilon + |\nabla u_{\epsilon}|^2)^{\frac{(\Tilde{\alpha}-1)(p-2)}{2}} u_{\epsilon,i} |u_{\epsilon,kl}|^{\tilde \alpha-3} \operatorname{sign}(u_{\epsilon,kl}) \psi^2 \ dx \\
				&+ 2(p-2) \int_{B} (\epsilon + |\nabla u_{\epsilon}|^2)^{\frac{p-4}{2}} (\nabla u_{\epsilon},\nabla u_{\epsilon,i}) (\nabla u_{\epsilon},\nabla \psi) \times\\
				&\qquad \qquad \qquad \qquad\qquad \qquad \qquad \times (\epsilon + |\nabla u_{\epsilon}|^2)^{\frac{(\Tilde{\alpha}-1)(p-2)}{2}} u_{\epsilon,i} |u_{\epsilon,kl}|^{\tilde \alpha -2}\psi\,dx\\
				&= \int_{B} f_i (\epsilon + |\nabla u_{\epsilon}|^2)^{\frac{(\Tilde{\alpha}-1)(p-2)}{2}} u_{\epsilon,i} |u_{\epsilon,kl}|^{\tilde \alpha -2} \psi^2 \ dx.\\
			\end{split}
		\end{equation*}
		Summing over $i=1,...,n$, we get
		\begin{equation} \label{eq:sommai}
			\begin{split}
				& (\Tilde{\alpha}-1)(p-2) \int_{B} (\epsilon + |\nabla u_{\epsilon}|^2)^{\frac{\tilde \alpha (p-2)-2}{2}} |D^2u_{\epsilon}\nabla u_{\epsilon}|^2  |u_{\epsilon,kl}|^{\Tilde{\alpha}-2}\psi^2 \ dx \\
				&+ \int_{B} (\epsilon + |\nabla u_{\epsilon}|^2)^{\frac{\tilde \alpha (p-2)}{2}} |D^2u_{\epsilon}|^{2}  |u_{\epsilon,kl}|^{\tilde \alpha -2} \psi^2 \ dx\\
				&+(\Tilde{\alpha}-2) \int_{B} (\epsilon + |\nabla u_{\epsilon}|^2)^{\frac{\tilde \alpha (p-2)}{2}} (D^2u_{\epsilon} \nabla u_{\epsilon},\nabla u_{\epsilon,kl})  |u_{\epsilon,kl}|^{\Tilde{\alpha }-3} \operatorname{sign}(u_{\epsilon,kl}) \psi^2 \ dx  \\
				&+ 2\int_{B} (\epsilon + |\nabla u_{\epsilon}|^2)^{\frac{\tilde \alpha (p-2)}{2}} (D^2u_{\epsilon} \nabla u_{\epsilon}, \nabla \psi) |u_{\epsilon,kl}|^{\tilde \alpha -2}  \psi \ dx \\
            \end{split}
            \end{equation}
            \begin{equation*}
            \begin{split}
				&+ (\tilde \alpha-1)(p-2)^2 \int_{B} (\epsilon + |\nabla u_{\epsilon}|^2)^{\frac{\tilde \alpha (p-2)-4}{2}} (\nabla u_{\epsilon}, D^2u_{\epsilon}\nabla u_{\epsilon})^{2} |u_{\epsilon,kl}|^{\tilde \alpha -2} \psi^2 \ dx\\
				&+ (p-2) \int_{B} (\epsilon + |\nabla u_{\epsilon}|^2)^{\frac{\alpha (p-2)-2}{2}} \sum_{i=1}^{n} (\nabla u_{\epsilon},\nabla u_{\epsilon,i})^2 |u_{\epsilon,kl}|^{\tilde \alpha -2}  \psi^2 \ dx \\  
				&+ (\Tilde{\alpha}-2)(p-2) \int_{B} (\epsilon + |\nabla u_{\epsilon}|^2)^{\frac{\tilde \alpha(p-2)-2}{2}} (\nabla u_{\epsilon},D^2u_{\epsilon} \nabla u_{\epsilon}) (\nabla u_{\epsilon},\nabla u_{\epsilon,kl}) \times\\
				&\qquad \qquad \qquad \qquad \times |u_{\epsilon,kl}|^{\tilde \alpha-3} \operatorname{sign}(u_{\epsilon,kl}) \psi^2 \ dx \\
				&+ 2(p-2) \int_{B} (\epsilon + |\nabla u_{\epsilon}|^2)^{\frac{\tilde \alpha (p-2)-2}{2}} (\nabla u_{\epsilon},D^2u_{\epsilon}\nabla u_{\epsilon}) (\nabla u_{\epsilon},\nabla \psi)  |u_{\epsilon,kl}|^{\tilde \alpha -2}\psi\\
				&- \int_{B} (\nabla f,\nabla u_{\epsilon}) (\epsilon + |\nabla u_{\epsilon}|^2)^{\frac{(\Tilde{\alpha}-1)(p-2)}{2}}  |u_{\epsilon,kl}|^{\tilde \alpha -2} \psi^2 \ dx=:I_1+\cdot\cdot\cdot +I_9=0.\\
			\end{split}
		\end{equation*}
		
		Since $p<2$, using the Cauchy-Schwarz
		inequality we get
		\begin{equation} \label{eq:1}
			\begin{split}
				I_1 &=(\tilde \alpha -1)(p-2) \int_{B} (\epsilon + |\nabla u_{\epsilon}|^2)^{\frac{\tilde \alpha (p-2)-2}{2}} |D^2u_{\epsilon} \nabla u_{\epsilon}|^{2} |u_{\epsilon,kl}|^{\tilde \alpha-2} \psi^2 \ dx \\
				&\ge (\tilde \alpha -1)(p-2) \int_{B} (\epsilon + |\nabla u_{\epsilon}|^2)^{\frac{\tilde \alpha (p-2)}{2}} |D^2u_{\epsilon}|^{2} |u_{\epsilon,kl}|^{\tilde \alpha-2} \psi^2 \ dx \\
			\end{split}
		\end{equation}
		In the same way, we deduce
		\begin{equation} \label{eq:2}
			\begin{split}
				I_6 &=(p-2) \int_{B} (\epsilon + |\nabla u_{\epsilon}|^2)^{\frac{\tilde \alpha (p-2)-2}{2}} \sum_{i=1}^{n} (\nabla u_{\epsilon,i},\nabla u_{\epsilon})^2 |u_{\epsilon,kl}|^{\tilde \alpha -2} \psi^2 \ dx \\
				&\ge (p-2) \int_{B} (\epsilon + |\nabla u_{\epsilon}|^2)^{\frac{\tilde \alpha (p-2)}{2}} |D^2u_{\epsilon}|^2 |u_{\epsilon,kl}|^{\tilde \alpha -2} \psi^2 \ dx \\
			\end{split} 
		\end{equation}
		
		Using \eqref{eq:1} and \eqref{eq:2} in \eqref{eq:sommai} we get
		\begin{equation}
			\begin{split}
				&[(p-1)+(\tilde \alpha -1)(p-2)]\int_{B} (\epsilon + |\nabla u_{\epsilon}|^2)^{\frac{\tilde \alpha (p-2)}{2}} |D^2u_{\epsilon}|^2 |u_{\epsilon,kl}|^{\tilde \alpha -2} \psi^2 \ dx \\
				&\le I_1+I_2+I_5+I_6\\
				&\le (\tilde \alpha -2)(p-1)\int_{B} (\epsilon + |\nabla u_{\epsilon}|^2)^{\frac{\tilde \alpha (p-2)+1}{2}} |D^2u_{\epsilon}| |u_{\epsilon,kl}|^{\tilde \alpha -3} |\nabla u_{\epsilon,kl}| \psi^2 \ dx \\
				&+ (2+2|p-2|)\int_{B} (\epsilon + |\nabla u_{\epsilon}|^2)^{\frac{\tilde \alpha (p-2)+1}{2}} |D^2u_{\epsilon}| |u_{\epsilon,kl}|^{\tilde \alpha -2} |\nabla \psi| \psi \ dx \\ 
				&+ \int_{B} |\nabla f|  (\epsilon + |\nabla u_{\epsilon}|^2)^{\frac{(\Tilde{\alpha}-1)(p-2)+1}{2}}  |u_{\epsilon,kl}|^{\tilde \alpha -2} \psi^2 \ dx\\
			\end{split}
		\end{equation}
		
		Taking $p > 2 - \frac{1}{\tilde\alpha}$ we get
		\begin{equation} \label{eq:i1i2}
			\begin{split}
				&\int_{B} (\epsilon + |\nabla u_{\epsilon}|^2)^{\frac{\tilde \alpha (p-2)}{2}} |D^2u_{\epsilon}|^2 |u_{\epsilon,kl}|^{\tilde \alpha -2} \psi^2 \ dx \\
				&\le C(\tilde\alpha,p)\int_{B} (\epsilon + |\nabla u_{\epsilon}|^2)^{\frac{\tilde \alpha (p-2)+1}{2}} |D^2u_{\epsilon}| |u_{\epsilon,kl}|^{\tilde \alpha -3} |\nabla u_{\epsilon,kl}| \psi^2 \ dx \\
				&+ C(p)\int_{B} (\epsilon + |\nabla u_{\epsilon}|^2)^{\frac{\tilde \alpha (p-2)+1}{2}} |D^2u_{\epsilon}| |u_{\epsilon,kl}|^{\tilde \alpha -2} |\nabla \psi| \psi \ dx \\ 
				&+  \int_{B} |\nabla f|  (\epsilon + |\nabla u_{\epsilon}|^2)^{\frac{(\Tilde{\alpha}-1)(p-2)+1}{2}}  |u_{\epsilon,kl}|^{\tilde \alpha -2} \psi^2 \ dx\\
				&=: \Tilde{I}_1 + \Tilde{I}_2 + \Tilde{I}_3,\\
			\end{split}
		\end{equation}
		where $C(\tilde \alpha,p)$ and $C(p)$ are positive constants.
		
		Now we estimate the term $\Tilde{I}_1$ of \eqref{eq:i1i2}. From Proposition \ref{soluzioneregolarizzata} (see also Remark \ref{remarkino}), and using a standard Young inequality we get: 
		\begin{equation}
			\begin{split}
				\Tilde{I}_1 &\le \frac{C(\Tilde{\alpha}, p)}{2} \int_{B} (\epsilon + |\nabla u_{\epsilon}|^2)^{\frac{p-2+\alpha}{2}} |D^2u_{\varepsilon}|^{\gamma-1}|D^3u_\varepsilon|^2 \psi^2 \ dx \\
				&+ \frac{C(\Tilde{\alpha}, p)}{2} \int_{B} (\epsilon + |\nabla u_{\epsilon}|^2)^{\frac{2\tilde\alpha(p-2)+4-p-\alpha}{2}}   |D^{2}u_{\varepsilon}|^{2\tilde\alpha -3 -\gamma} \psi^2 \ dx \\ 
				&\le C(\gamma,\alpha,p,q,n,f,R ,\|\nabla u\|_{L^{\infty}_{loc}(\Omega)}, \Tilde{\alpha})\\
				&+ \frac{C(\Tilde{\alpha}, p)}{2} \int_{B} (\epsilon + |\nabla u_{\epsilon}|^2)^{\frac{2\tilde\alpha(p-2)+4-p-\alpha}{2}}   |D^{2}u_{\varepsilon}|^{2\tilde\alpha -3 -\gamma}  \ dx, \\  
			\end{split}
		\end{equation}
		where $\alpha, \gamma$ are given in Theorem \ref{derivateterze2}, and $C(\gamma,\alpha,p,q,n,f,R ,\|\nabla u\|_{L^{\infty}_{loc}(\Omega)}, \Tilde{\alpha})$ is a positive constant.

		From Theorem \ref{derivateterze2} (see also Remark \ref{remarkino}) we can choose $\alpha = \alpha (p,0)=4-p,$ and for $\beta \approx 1$, we obtain
		\begin{equation}
			2\tilde\alpha(p-2)+4-p-\alpha\ge p-2-\beta  \iff p>2-\frac{1}{2\tilde\alpha-1}.
		\end{equation}
		
		For $\gamma=2\tilde \alpha-5$, by Theorem \ref{teosecond}, we have that
		\begin{equation}\label{eq:I_1}
			\Tilde{I}_1  \le C(\gamma,\alpha, \beta, p,q,n,f,R,\|\nabla u\|_{L^{\infty}_{loc}(\Omega)}, \Tilde{\alpha})
		\end{equation}
		
		We note that, since $q\ge 3+\gamma=2(\tilde \alpha-1)\ge 4$ and  $p$ satisfies \eqref{assunzionisup}, we can apply Theorem \ref{derivateterze2}. Furthermore, from Theorem \ref{derivateterze2}, we note that $f\in W^{2,q/(q-\gamma)}_{loc}(\Omega)=W^{2,2(\tilde \alpha -1)/3}_{loc}(\Omega)$.

		We estimate the term $\Tilde{I}_2$. Since $p$ fulfils \eqref{assunzionisup}, by definition of $\psi$ and using Theorem \ref{teoremacalderon} we get
		\begin{equation} \label{eq:I_2}
			\begin{split}
				\Tilde{I}_2 &\le C(R,\tilde\alpha,p,\|\nabla u\|_{L^{\infty}_{loc}(\Omega)}) \int_{B} |D^2u_{\varepsilon}|^{\tilde\alpha-1} \ dx \\
				&\le C(p,q,n,f,R,\|\nabla u\|_{L^{\infty}_{loc}(\Omega)}, \Tilde{\alpha})
			\end{split}
		\end{equation}
		where $C(p,q,n,f,R,\|\nabla u\|_{L^{\infty}_{loc}(\Omega)}, \Tilde{\alpha})$ is a positive constant.
		
		For the term $\Tilde{I}_3$, by assumptions on $f$ and $p$, we deduce 
		\begin{equation} \label{eq:I_3}
			\begin{split}
				\Tilde{I}_3 &\le C(\tilde\alpha,p,\|\nabla u\|_{L^{\infty}_{loc}(\Omega)}) \int_{B} |\nabla f| |D^2u_{\epsilon}|^{\tilde\alpha -2} \ dx\\
				&\le C(\tilde\alpha,p,q,n,f,R,\|\nabla u\|_{L^{\infty}_{loc}(\Omega)}),
			\end{split}
		\end{equation}
		where $C(\tilde\alpha,p,q,n,f,R,\|\nabla u\|_{L^{\infty}_{loc}(\Omega)})$ is a positive constant.
		By \eqref{eq:I_1}, \eqref{eq:I_2} and \eqref{eq:I_3} we get
		
		\begin{equation}\label{stiimaa}
			\begin{split}
				\int_{B} (\epsilon + |\nabla u_{\epsilon}|^2)^{\frac{\tilde \alpha (p-2)}{2}} |D^2u_{\epsilon}|^2 |u_{\epsilon,kl}|^{\tilde \alpha -2} \psi^2 \ dx
				\le C(\gamma,\alpha,p,q,n,f,R,\|\nabla u\|_{L^{\infty}_{loc}(\Omega)} , \Tilde{\alpha}),\\
			\end{split}
		\end{equation}
		where $C(\gamma,\alpha,p,q,n,f,R,\|\nabla u\|_{L^{\infty}_{loc}(\Omega)} , \Tilde{\alpha})$ is a positive constant.
		
		Let us consider a compact set $K\subset\subset B\setminus Z_u$. By Schauder estimates we remark that
		$$\|u_\varepsilon\|_{C^{2,\tilde \beta}(K)}\le C,$$
		where $\tilde \beta \in (0,1)$ and $C$ is a positive constant not depending on $\varepsilon$.

		Setting now $$V_\varepsilon:=(\varepsilon+|\nabla u_\varepsilon|^2)^{\frac{p-2}{2}}\nabla u_\varepsilon,$$ by \eqref{stiimaa}, we deduce  $V_\varepsilon\in W_{loc}^{1,\tilde \alpha}(\Omega)$, and it is uniformly bounded in the space. There exists $\tilde V\in
		W_{loc}^{1,\tilde \alpha}(\Omega)$ such that
		$$V_\varepsilon\rightharpoonup \tilde V \quad \text{for }\varepsilon\rightarrow 0.$$
		By the compact embedding $V_\varepsilon \rightarrow \tilde V$ in $L^q_{loc}(\Omega)$ with $q < 2^*$ and up to subsequence
		$V_\varepsilon\rightarrow \tilde V$ a.e.. $V_\varepsilon\rightarrow |\nabla u|^{p-2}\nabla u$ a.e. therefore
		$$\tilde V=|\nabla u|^{p-2}\nabla u\in W_{loc}^{1,\tilde \alpha}(\Omega)$$
providing the desired regularity result. Let us also emphasize that, 
		by Fatou's Lemma, we also deduce that
		\begin{equation*}
			\int_{B_R(x_0)\setminus Z_u} |\nabla u|^{\Tilde{\alpha}(p-2)} |D^2u|^{\Tilde{\alpha}} \ dx \le C(\gamma,\alpha,p,q,n,f,R,\|\nabla u\|_{L^{\infty}_{loc}(\Omega)},  \Tilde{\alpha}),
		\end{equation*}
		where $C(\gamma,\alpha,p,q,n,f,R,\|\nabla u\|_{L^{\infty}_{loc}(\Omega)},  \Tilde{\alpha})$ is a positive constant. 
	\end{proof}

\end{document}